\def\indiq{{\bf 1}}\def\E{{\mathbb E}}
\def\P{{\mathbb P}}
\def\R{{\mathbb R}}
\def\D{{\mathbb D}}
\def\N{{\mathbb N}}
\def\intot{\int_0^t}
\def\cP{{\mathcal P}}
\def\cL{{\mathcal L}}
\def\cF{{\mathcal F}}
\def\cS{{\mathcal S}}
\def\cB{{\mathcal B}}
\def\cW{{\mathcal W}}
\def\vip{\vskip0.2cm}
\def\sg{{\rm sg}}
\def\e{\varepsilon}
\def\bN{{\mathbf N}}
\newtheorem{theo}{Theorem}
\newtheorem{prop}[theo]{Proposition}
\newtheorem{rem}[theo]{Remark}
\newtheorem{lem}[theo]{Lemma}
\newtheorem{ass}[theo]{Assumption}
\begin{document}

\title{On a toy model of interacting neurons}
\author{Nicolas Fournier}
\author{Eva L\"ocherbach}

\address{N. Fournier: Laboratoire de Probabilit\'es et Mod\`eles al\'eatoires, 
UMR 7599, UPMC, Case 188, 4 place Jussieu, F-75252 Paris Cedex 5, France.}

\email{nicolas.fournier@upmc.fr}

\address{E. L\"ocherbach: CNRS UMR 8088, D\'epartement de Math\'ematiques, Universit\'e de Cergy-Pontoise,
2 avenue Adolphe Chauvin, 95302 Cergy-Pontoise Cedex, France.}

\email{eva.loecherbach@u-cergy.fr}

\subjclass[2010]{60K35, 60G55, 60F17}

\keywords{Piecewise deterministic Markov processes. Mean-field interaction. Biological neural nets. 
Interacting particle systems. Nonlinear stochastic differential equations. }

\begin{abstract}
We continue the study of a stochastic system of interacting neurons introduced in
De Masi, Galves, L\"ocherbach and Presutti \cite{aaee}. 
The system consists of $N$ neurons, each spiking randomly with rate depending on its membrane potential.
At its spiking time, the neuron potential is reset to $0$ and all other neurons
receive an additional amount $1/N$ of potential. Moreover, electrical synapses induce a deterministic drift of 
the system towards its average potential.
We prove propagation of chaos of the system, as $N \to \infty$, to a limit nonlinear
jumping stochastic differential 
equation. We consequently improve on the results of \cite{aaee}, since (i) we remove the
compact support condition on the initial datum, (ii) we get a rate of convergence in $1/\sqrt N$.
Finally, we study the limit equation: we describe the shape of its time-marginals,
we prove the existence of a unique non-trivial invariant distribution,
we show that the trivial invariant distribution is not attractive, and in a special case, 
we establish the convergence to equilibrium.
\end{abstract}

\maketitle 

\section{Introduction and main results}

\subsection{The model} We consider, for each $N\geq 1$, a family of i.i.d. 
Poisson measures
$(\bN^i(ds, dz))_{i=1,\dots,N}$ on $\R_+ \times \R_+ $ having intensity measure $ds dz$,
as well as a family $(X^{N,i}_0)_{i=1,\dots,N}$ of $\R_+$-valued random variables 
independent of
the Poisson measures.
The object of this paper is to study the Markov process
$X^N_t = (X^{N, 1 }_t, \ldots , X^{N, N}_t )$
taking values in $\R_+^N$ and solving, for $i=1,\dots,N$, for $t\geq 0$,
\begin{align}\label{eq:dyn}
X^{N, i}_t = & X^{N,i}_0  - \lambda \intot ( X^{N, i}_s - \bar X^N_s) ds -  \intot \int_0^\infty 
X^{N, i}_{s-}  \indiq_{ \{ z \le  f ( X^{N, i}_{s-}) \}} \bN^i (ds, dz) \\
&+ \frac1N \sum_{ j \neq i } \intot\int_0^\infty \indiq_{ \{ z \le  f ( X^{N, j}_{s-}) \}} \bN^j (ds, dz),
\nonumber
\end{align}  
where $ \bar X^N_t = N^{-1} \sum_{j=1}^N X^{N, j}_{t}$. The coefficients of this system are
$\lambda \geq 0$ and a function $f:\R_+\mapsto \R_+$  
satisfying (at least) the following assumption.

\begin{ass}\label{ass:1}
$f$ is non-decreasing, $f(0) = 0$, $f(x)>0$ for all $x>0$, $\lim_\infty f = \infty$ 
and $f\in C^1(\mathbb R_+)$.
\end{ass}


\begin{prop}\label{theo:1}
Grant Assumption \ref{ass:1}, let $\lambda\geq 0$ and $N\geq 1$ be fixed and suppose that 
$\bar X_0^{N } < \infty$ a.s. Then there exists a unique c\`adl\`ag adapted 
strong solution  $(X^N_t)_{t\geq 0}$ 
to \eqref{eq:dyn} taking values in $\R_+^N$.
\end{prop}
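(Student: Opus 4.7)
The plan is to build the solution by truncation. For each integer $K\ge 1$, let $f_K(x):=f(x\wedge K)$: it is bounded by $f(K)$ and globally Lipschitz since $f\in C^1$. The SDE \eqref{eq:dyn} with $f$ replaced by $f_K$ has instantaneous jump rate at most $Nf(K)$, so it admits a unique strong adapted solution $X^{N,K}$ via a standard thinning construction: between successive jumps the coordinates follow the linear ODE $\dot X^{N,i}=-\lambda(X^{N,i}-\bar X^N)$, explicitly integrable because $\bar X^N$ is constant between jumps (one has $\sum_i(X^{N,i}-\bar X^N)\equiv 0$), and pathwise uniqueness comes from a routine $L^1$-Gronwall estimate based on the Lipschitz constant of $f_K$. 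Non-negativity is preserved: when a coordinate vanishes the drift is $\lambda\bar X^N\ge 0$, and jumps only reset a coordinate to $0$ or add $1/N$.

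Set $\tau_K:=\inf\{t\ge 0:\max_i X^{N,K,i}_t\ge K\}$. Pathwise uniqueness forces the $X^{N,K}$ to coincide on $[0,\tau_K)$ across $K$, so one may define $X^N$ on $[0,\tau_\infty)$ by $X^N_t:=X^{N,K}_t$ for $t<\tau_K$, with $\tau_\infty:=\sup_K\tau_K$. The real point is to show $\tau_\infty=+\infty$ a.s. A direct inspection of the dynamics gives
\begin{equation*}
\sup_{s\le t}\max_i X^{N,K,i}_s \;\le\; \max_i X^{N,i}_0 + \frac{1}{N}\,J^{N,K}_t,
\end{equation*}
where $J^{N,K}_t$ denotes the total number of jumps of $X^{N,K}$ up to time $t$: between jumps the maximum is non-increasing (each coordinate relaxes towards $\bar X^N\le\max_i X^{N,i}$), while at each jump it grows by at most $1/N$. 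Everything thus reduces to a uniform-in-$K$ bound on $\E[J^{N,K}_t]$.

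This is the main obstacle, since $f$ is only assumed $C^1$ and may grow arbitrarily fast. The idea is to exploit a conservation law: summing \eqref{eq:dyn} over $i$ and using $\sum_i(X^{N,i}-\bar X^N)=0$ yields
\begin{equation*}
\sum_i X^{N,K,i}_t = N\bar X^N_0 + \frac{N-1}{N}\,J^{N,K}_t - \sum_j\intot\!\int_0^\infty X^{N,K,j}_{s-}\indiq_{\{z\le f_K(X^{N,K,j}_{s-})\}}\bN^j(ds,dz).
\end{equation*}
Localising on $\{\bar X^N_0\le M\}$, taking expectations and compensating the Poisson integrals gives, after an elementary rearrangement,
\begin{equation*}
\E\Bigl[\sum_j\intot (X^{N,K,j}_s-1)\,f_K(X^{N,K,j}_s)\,ds\Bigr] \;\le\; NM.
\end{equation*}
Splitting the integrand according to $X^{N,K,j}_s\le 2$ or $X^{N,K,j}_s>2$ (where $(x-1)f_K(x)\ge f_K(x)$) converts this into $\E[J^{N,K}_t]\le N(M+2tf(2))$, uniformly in $K\ge 2$. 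Markov's inequality then yields $\P(\tau_K\le t,\bar X^N_0\le M)\to 0$ as $K\to\infty$, hence $\tau_\infty=\infty$ a.s.\ on $\{\bar X^N_0\le M\}$; letting $M\to\infty$ concludes the existence part. Uniqueness for \eqref{eq:dyn} is then immediate from pathwise uniqueness of each truncated system together with $\tau_\infty=\infty$.
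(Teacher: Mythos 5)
Your proof is correct and follows the same overall scheme as the paper: truncate $f$, obtain the truncated solutions $X^{N,K}$ and common stopping times, and establish an a priori non-explosion bound by summing \eqref{eq:dyn} over $i$ so that the centered drift $\sum_i(X^{N,i}-\bar X^N)$ vanishes. The genuine divergence is in how that bound is exploited. The paper (Proposition \ref{prop:2}) extracts from the very same empirical-mean identity a \emph{pathwise}, almost-sure estimate: since $\bar X^N_t\ge 0$ and $(N-1)/N\le 1$, one gets a.s.\ $N^{-1}\sum_i\intot\int(X^{N,i}_{s-}-1)\indiq_{\{z\le f(X^{N,i}_{s-})\}}\bN^i(ds,dz)\le\bar X^N_0$, and the elementary inequality $x-1\ge(x+1)/3-(4/3)\indiq_{\{x\le2\}}$ turns this into the pathwise control \eqref{tt1} of $X^{N,i}_t$ by $X^{N,i}_0+C(1+t)(\bar X^N_0+Z^N_t)$ with $Z^N_t$ a.s.\ finite, so $\tau=\infty$ a.s.\ with no expectations taken at all. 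Your version takes expectations and compensates the Poisson integrals, which forces the extra localization on $\{\bar X^N_0\le M\}$ (unavoidable, since $\bar X^N_0$ is only assumed finite a.s.), and then a Markov inequality and exhaustion in $M$. Both are valid; the paper's pathwise route is cleaner and also furnishes the quantitative deviation bound \eqref{tt3} used later, while your route is more standard McKean-truncation bookkeeping. Your auxiliary observation that the running maximum is non-increasing between jumps and grows by at most $1/N$ per jump is a nice substitute for \eqref{tt1}; it would benefit from one line of justification (the pointwise max of finitely many $C^1$ functions is absolutely continuous with a.e.\ derivative equal to that of an argmax, which is $\le 0$ here since the argmax coordinate satisfies $X^{N,i}\ge\bar X^N$).
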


\subsection{Formal description and goals}
This paper continues a study started in De Masi, Galves, L\"ocherbach and Presutti \cite{aaee}. 
The particle system \eqref{eq:dyn} is the model of interacting neurons considered in \cite{aaee}, 
inspired by a work of Galves and L\"ocherbach \cite{ae}. 
The system is made of $N$ neurons. Each $X_t^{N, i }$ models the 
membrane potential at time $t$ of the $i$-th neuron. Interactions between neurons are due to two types
of synapses, chemical and electrical synapses. Chemical synapses are characterized through {\it spiking}
of the neurons, i.e.\ a fast trans-membrane current. Spiking occurs randomly following a point process of rate $f(x)$ 
for a neuron of which the membrane potential equals $x$. At its spiking time, the membrane potential 
of the neuron is reset to a resting potential which we choose to be equal to $0.$ At the same
time, the action of the chemical synapses induces an increase of the membrane potentials of 
the other neurons: they receive an additional amount $1/N$ of potential. Our model does not take into account a 
refractory period. Electrical synapses, which are due to gap junctions, work constantly over time and tend to 
synchronize the membrane potentials of the neurons. Such electrical synapses are typical for systems requiring 
fast responses to stimuli, often found in animals. They induce a constant drift of the system towards the 
average membrane potential of the system, at speed $\lambda$.

\vip

It is well known that neuronal interactions can exhibit very complicated interaction 
structures. Our model only takes into account the
average effect of the interactions. We are thus working 
with a toy model where interactions are of
{\it mean-field} type.

\vip

Concerning $f$, we think of functions of the type $f(x)=(x/x_0)^\xi$ with $\xi$ large and some {\it soft threshold} $x_0$. In this case, for $x$ the membrane potential of the neuron, 
spiking occurs at very 
low rate if $x<x_0$ and with very high rate if $x>x_0$.
Note that in the biological literature it is often assumed that spiking occurs when the membrane potential 
reaches a fixed threshold $x_0$, which would correspond to $f(x) = \infty 1_{[x_0, \infty)}$.
However, a well-defined fixed threshold does not seem to exist in {\it in vivo} neurons, 
see e.g.\ Jahn, Berg, Hounsgaard and Ditlevsen \cite{susanne} who propose a statistical study showing that a point 
process model in which the jump intensity depends on the membrane potential is well-adapted.
We therefore propose a smooth firing rate depending on the membrane potential of the form 
$f(x)=c x^\xi$ with $\xi$ quite large and $c>0$.

\vip

We are interested in the evolution of a large system of neurons, i.e.\ in the 
limit $N\to\infty$. We prove a weak law of large numbers for the empirical 
measure of the system ({\it propagation of chaos}): we show that
the empirical distribution of the system becomes
deterministic as $N\to \infty$ and tends to the law of a limit process $(Y_t)_{t \geq 0} $ which 
solves a {\it nonlinear} jumping SDE.

\vip

Such a result has already been achieved in \cite{aaee} in the case of a {\it compact support}, 
i.e. when the initial conditions $X_0^{N, i }$ are uniformly bounded. It is then 
possible to control the evolution of the support of the law of the 
process over time. Consequently, the propagation of chaos can be shown for any locally Lipschitz continuous
function $f$, exactly as if it was globally Lipschitz continuous and bounded.

\vip

The case where the initial conditions are not compactly supported is more delicate, at least
when $f$ is not globally Lipschitz continuous. 
Our results work under quite weak moment conditions on the
initial datum, for quite general functions $f$. 
We obtain a rate of convergence in $1/\sqrt N$, as 
one expects. These results should remain true when adding a diffusive component 
to the dynamics of individual neurons, at the cost of a higher level of technicality.

\vip

Finally, we propose a short study of the limit equation.
We describe the shape of its time-marginals,
we prove the existence of a unique non-trivial invariant distribution, we show that the trivial
invariant distribution is not attractive, and when $\lambda=0$, 
we establish the convergence to equilibrium for a class of initial conditions.

\vip

Let us mention that all the results and proofs below have been elaborated thinking of the case where
$f(x)=x^\xi$ with $\xi\geq 2$, which thus satisfies all the conditions of the paper.

\subsection{References}

Using a mean-field approach in order to describe the typical behavior of a neuron within a large population 
of similarly behaving neurons from a macroscopic point of view is by now classical in neuromathematics. 
A lot of effort has been spent by the neuromathematical community focussing on the study of leaky 
integrate-and-fire models and their mean-field limits; in these models the membrane potential of a 
neuron is described by a (finite or infinite dimensional) diffusion process, and spiking occurs when 
reaching a threshold. 

\vip

Recent interesting papers using a fixed threshold are those of Delarue, Inglis, Rubenthaler 
and Tanr\'e \cite{d2015a} and 
\cite{d2015b}, see also C\'aceres, Carrillo and Perthame 
\cite{ccp}. Here, the membrane potential is described by a one dimensional diffusion process. The existence of a 
fixed threshold may lead to severe mathematical problems related to a possible blow-up of the limit nonlinear 
equation. Such a blow-up appears when a  macroscopically large proportion of neurons spike at the same time. 
Avalanches and synchronization are phenomena which are related to such a blow-up. 

\vip

Notice that on the contrary, our model does not include a diffusive part in the evolution of each neuron's 
membrane potential. As indicated above, we expect our result on propagation of chaos to remain true when adding 
such a diffusive component. But of course none of the above mentioned phenomena such as blow-up or avalanches appear 
in our model since spiking occurs at a smooth rate which is finite all over the state space. As we have already 
mentioned, this choice of modeling is motivated by biological considerations. Consequently, we do not have to face 
the same difficulties. The problems we have to deal with are linked to the jump part of the equation, more 
specifically to the fact that the spiking rate is not globally Lipschitz. 

\vip

Recently, Inglis and Talay \cite{it} have proposed a model of the integrate and fire type where neurons do spike 
when hitting a fixed threshold but where the effect of a spike is not instantaneously transmitted to the other 
neurons. As a consequence, their model does not present blow-up phenomena neither. 

\vip

For an excellent overview of the mean-field approach in integrate-and-fire models with a strong modeling point 
of view, we refer the reader to Faugeras, Touboul and Cessac \cite{faugeras} . They specifically deal with the 
case where several big populations of neurons interact through their neural efficacities which are chosen to be 
i.i.d. Gaussian random variables. In particular, they also deal with negative 
synaptic weights. Actually the extension of our model to the multi-population case, including also inhibitory 
synapses, seems to be quite straightforward and is part of a future work. 

\vip

Finally, in a recent article, Lu\c{c}on and Stannat \cite{ls} consider a population of mean field interacting 
diffusions which are attached to spatial positions and evolve within a random environment. Here, the spiking is 
encoded within the diffusion model (as in the Fitzhugh-Nagumo model). 
The interaction strength between two 
neurons depends on their spatial distance and may show singularities. Moreover, the coefficients of the 
underlying diffusion are of polynomial growth and therefore not globally Lipschitz neither. However, the 
techniques and results obtained in this article are clearly far from the considerations we are interested in in the present paper. 

\vip
To summarize, our aim is not to build a model 
which describes the full neurophysiological reality, but to study a simple model 
describing some basic biological features and to concentrate on the randomness hidden behind the 
spike times. It is inspired by integrate-and-fire models, but
spiking occurs randomly, with state-dependent intensity and the system does not contain any other 
source of randomness. Let us finally mention that this model can also be interpreted in terms of an associated 
nonlinear Hawkes process including a variable memory structure.
We refer to Hansen, Reynaud-Bouret and Rivoirard \cite{rbr} for an interesting statistical study 
of the neuronal interaction graph using Hawkes processes. 

\vip
 
From the purely probabilistic point of view, propagation of chaos is a popular topic
since the seminal works of Kac \cite{k}, McKean \cite{mk1,mk2}
and Sznitman \cite{s2,s}.
Generally, one tries to prove that the time-evolution of a particle, 
interacting with a large number of other particles,
can be approximated by a {\it nonlinear} process. By {\it nonlinear}, in the sense of McKean,
we mean that the law of the process itself is involved in its dynamics.
There exist essentially two kinds of proofs. 

\vip

$\bullet$ The first one, based on {\it coupling} and
often used in \cite{s}, provides a (often optimal) rate of convergence but works mainly 
when all the parameters of the model are globally Lipschitz continuous.
However, it sometimes happens that the non-Lipschitz terms are not really a problem, when they have, roughly,
the good sign: see Malrieu \cite{m}, who studies some McKean-Vlasov equation with, roughly, a convex interaction
potential.
More recently, it has been shown in Bolley, Ca\~nizo and Carrillo \cite{bcc} that the coupling method can also
apply to the case of non-globally Lipschitz parameters, under some very restrictive exponential
moment conditions. They also get a (almost optimal) rate of convergence. This idea
has also been exploited for the Boltzmann equation in \cite{fm}.

\vip

$\bullet$ The second method, elaborated in \cite{s2} when studying the Boltzmann equation, 
is based on tightness/consistency/uniqueness of the nonlinear process. It applies
much more generally (it requires only some {\it a priori} bounds and some continuity of the parameters),
but does not provide any rate of convergence. 

\vip

In the present paper we make use of the two methods and investigate to which extent they can be applied. Roughly, the tightness/consistency/uniqueness 
works very well, under some very light assumptions on $f$ and on the initial conditions.
But the most important point of the paper is that, still for quite a general class of functions $f$ 
(as $x^\xi$ with $\xi\geq 2$), we show that the coupling method
also works, without imposing some exponential moment conditions. 
This is very specific to the model under study,
relies on quite fine computations, and on the use of an {\it ad hoc} distance. As previously mentioned
we get an optimal rate of convergence. 

\vip

Finding an {\it ad hoc} distance is a classical strategy to prove uniqueness of the solution or to study
its large time behavior, in all fields of differential equations.
It is a good approach, in the sense that it often allows for many developments, such as stability
and convergence of approximate models. But each model requires its own study and the {\it good} distance
often looks mysterious. The {\it distance} may or may not depend on the precise parameters of the model.
Let us quote a few papers.
For example, Tanaka \cite{t,t2} discovered, using a specific nonlinear jumping SDE, that the Wasserstein distance 
with quadratic cost between two solutions of the homogeneous Boltzmann equation for Maxwell molecules is decreasing,
providing the first uniqueness result for the Boltzmann equation in the 
physically reasonable case without cutoff. 
Bolley, Guillin and Malrieu \cite{bgm} were able to precisely study, using a nonlinear Brownian SDE, 
the large-time behavior of solutions
to a Vlasov-Fokker-Planck equation by introducing an {\it ad hoc} modification of the Wasserstein distance
depending on the parameters of the equation.
They also quantified, with similar tools, the convergence of some particle systems.
In \cite{f,fl}, we introduced an {\it ad hoc} distance to prove uniqueness of some infinite stochastic interacting 
particle systems undergoing coalescence. Here also, the distance was depending on the interaction
kernel.

\vip

However, the study proposed in the present paper, and in particular the proof of the uniqueness  of the limit 
equation, is situated in a completely different mathematical framework compared to the above mentioned papers, 
and the specific choice of an {\it ad hoc} distance that we propose is a new feature.

\subsection{The limit equation}
Assume that the $X^{N,i}_0$ are i.i.d. with common law $g_0$ on $\R_+$.
Simple considerations show that 
the solution $(X^N_t)_{t\geq 0}$ should behave, for $N$ large, as $N$ independent copies
of the solution to the following {\it nonlinear}, in the sense of McKean, SDE. Let $Y_0$
be a $g_0$-distributed random variable, 
independent of a Poisson measure $\bN(ds,dz)$ on 
$\R_+ \times \R_+ $ having intensity measure $ ds dz$. An $\R_+$-valued c\`adl\`ag
adapted process $(Y_t)_{t\geq 0}$ is said to solve the nonlinear SDE if 
\begin{equation}\label{eq:dynlimit}
Y_t = Y_0 - \lambda \intot (Y_s - \E[ Y_s])ds 
- \intot\int_0^\infty Y_{s- } \indiq_{ \{ z \le  f ( Y_{s-}) \}} \bN  (ds, dz) + \intot \E[f(Y_s)]ds.
\end{equation} 

For PDE specialists, let us mention that for $(Y_t)_{t\geq 0}$ a solution to \eqref{eq:dynlimit},
$g(t)=\cL(Y_t)$ solves the following nonlinear PDE in weak form: for any 
$ \phi \in C^1_b (\R_+),$ the set of $C^1$-functions on $[0,\infty)$ such that $\phi$ and $\phi'$ are bounded,
for any $t\geq 0$,
\begin{align*}
\int_0^\infty \phi (x) g (t, dx) =& \int_0^\infty \phi (x) g (0, dx) +\intot \int_0^\infty 
\Big([ \phi ( 0) - \phi ( x) ] f(x) + \phi'(x) [ a_s - \lambda x ]\Big)g (s, dx)ds,
\end{align*}
where $a_t=\int_0^\infty [f(x)+\lambda x]g(t,dx)$. 
Setting also $p_t=\int_0^\infty f(x)g(t,x)dx$,
the strong equation writes
\begin{align*}
\forall\; t>0,\; \forall \;x>0, \;\; 
\partial_t g(t,x) = [\lambda x - a_t]\partial_x g(t,x) + [\lambda - f(x)]g(t,x) \quad 
\hbox{and}\quad \forall \;t>0, \;\;
g(t,0)=p_t/a_t,
\end{align*}
with $g(0,x)$ a given probability density on $[0,\infty)$.

\vip

The nonlinear SDE \eqref{eq:dynlimit} is not clearly well-posed, unless one assumes e.g.\ that 
$f$ is globally Lipschitz-continuous and bounded. Under Assumption \ref{ass:1},
we are generally only able to check the weak existence, that is
existence of a filtered probability space on which there is a Poisson measure $\bN$ and 
a c\`adl\`ag adapted
process $(Y_t)_{t\geq 0}$ such that \eqref{eq:dynlimit} holds true for all $t\geq 0$.

\begin{ass}\label{ass:2}
$f \in C^2(\R_+)$ is convex increasing and 
$\sup_{x\geq 1} [f'(x)/f(x)+f''(x)/f'(x)]<\infty$.
\end{ass}

\begin{theo}\label{theo:2}
Grant Assumption \ref{ass:1} and suppose that $\lambda \geq 0$.

\vip

(i) Assume only that $\E[Y_0]<\infty$. Then there is weak existence of a solution
$(Y_t)_{t\geq 0}$ to \eqref{eq:dynlimit} satisfying 
$\int_0^t \E[Y_sf(Y_s)]ds <\infty$ for all $t\geq 0$.

\vip

(ii) Assume now that the law of $Y_0$ is compactly supported. Then there exists a path-wise unique
solution $(Y_t)_{t\geq 0}$ to \eqref{eq:dynlimit} such that there is a deterministic locally bounded function 
$A:\R_+\mapsto\R_+$ such that a.s., $\sup_{[0,\infty)} (Y_t/A(t))<\infty$.

\vip

(iii) Grant now Assumption \ref{ass:2} and assume that $\E[f(Y_0)]<\infty$. Then there is a
path-wise unique solution to \eqref{eq:dynlimit} satisfying 
$\sup_{[0,t]} \E[f (Y_s) ] < \infty$ for all $t\geq 0$.
\end{theo}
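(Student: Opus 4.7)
The three parts are of different flavors and I would handle them sequentially, reusing ingredients.

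\emph{Part (i): weak existence.} My plan is to mollify $f$ by $f_n=f\wedge n$, which is bounded and globally Lipschitz, and to solve the truncated nonlinear SDE by a Picard iteration carried out on the space of laws under the $1$-Wasserstein distance (the boundedness of $f_n$ and the mean-reverting structure of the $\lambda$-drift make this contraction standard). I would then push through two uniform moment estimates. Taking expectation in the truncated equation, the $\lambda$ term cancels and one obtains
\begin{equation*}
\E[Y^n_t]=\E[Y_0]+\int_0^t \E\bigl[f_n(Y^n_s)(1-Y^n_s)\bigr]ds\le \E[Y_0]+f(1)t,
\end{equation*}
since $f_n(y)(1-y)\le f(1)\indiq_{\{y\le 1\}}$. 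A parallel computation (splitting $y f_n(y)=(y-1)f_n(y)+f_n(y)$ and using the same identity) controls $\int_0^t \E[Y^n_s f_n(Y^n_s)]ds$ uniformly in $n$. With these bounds the compensators of the jump terms are $L^1$-uniformly integrable, so Aldous' criterion yields tightness in $\D([0,\infty),\R_+)$. Any limit point satisfies \eqref{eq:dynlimit} by passing to the limit in the associated martingale problem, applied to test functions in $C^1_b(\R_+)$ with a suitable cutoff to cope with unbounded $f$.

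\emph{Part (ii): compactly supported initial data.} If $Y_0\le M$ a.s., jumps only send $Y_t$ to $0$, so pathwise
\begin{equation*}
Y_t\le Y_0+\lambda\int_0^t \E[Y_s]\,ds+\int_0^t \E[f(Y_s)]\,ds.
\end{equation*}
Combined with $\E[Y_t]\le M+f(1)t$ from part (i), a bootstrap yields a deterministic locally bounded $A$ with $Y_t\le A(t)$ a.s. On $[0,A(t)]$ the function $f$ is Lipschitz, so standard coupling of two solutions through the same Poisson measure produces a Grönwall estimate for $\E[|Y_t-\tilde Y_t|]$ and gives pathwise uniqueness.

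\emph{Part (iii): existence and uniqueness under Assumption \ref{ass:2}.} For the a priori bound, applying Itô's formula to $f(Y_t)$ in the truncated equation produces a gain term $-\E[f(Y_s)^2]$ from the jumps (because $f(0)=0$), while the drift contribution is controlled by $f'(Y)(\lambda Y+\lambda\E[Y]+\E[f(Y)])$; Assumption~\ref{ass:2} gives $f'(x)\le C f(x)$ and $x f'(x)\le C f(x)$ for $x\ge 1$, so the quadratic gain absorbs the drift and yields $\sup_{[0,t]}\E[f(Y^n_s)]\le C_t$ uniformly in $n$. Passing to the limit as in (i) gives a solution with the stated bound. For uniqueness I would couple two such solutions $Y,\tilde Y$ by the same Poisson measure and estimate $\E[\varphi(Y_t-\tilde Y_t)]$ for an ad hoc distance $\varphi$. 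Decomposing according to who jumps (assuming $Y>\tilde Y$: both jump at rate $f(\tilde Y)$ with distance dropping to $0$; only $Y$ jumps at rate $f(Y)-f(\tilde Y)$, after which the distance becomes $\tilde Y$) shows that the single-jump events create a contribution of order $(f(Y)-f(\tilde Y))\cdot\max(Y,\tilde Y)$, which cannot be absorbed by $|Y-\tilde Y|$ when $f$ is not Lipschitz. The ad hoc $\varphi$ must be concave enough (saturating like $1/f'$ or similar) to turn this term into something of order $|Y-\tilde Y|$, exploiting exactly the bounds on $f'/f$ and $f''/f'$ from Assumption~\ref{ass:2}. Grönwall then closes the argument.

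\textbf{Main obstacle.} The real difficulty is the uniqueness in (iii): constructing a distance $\varphi$ compatible with the jump mechanism. The problem is genuinely nonlinear in the size of the processes, since high $Y$ produces both more frequent jumps (rate $f(Y)$) and larger mismatched jumps (residual $\tilde Y$ after reset), and only the precise quantitative control encoded in Assumption~\ref{ass:2} allows these two effects to be balanced by a single scalar distance. The rest of the argument (weak existence, the compactly supported case, and the $f$-moment propagation) is, by contrast, essentially routine given the correct a priori estimates.
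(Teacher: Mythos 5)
Your plan for (i) takes a genuinely different route from the paper. You truncate $f$ to $f_n=f\wedge n$, solve the truncated nonlinear SDE by Wasserstein-contraction Picard iteration, and pass to the limit; the paper instead harvests weak existence as a free by-product of the tightness and consistency work done for the particle system (which it must do anyway for propagation of chaos). Your a priori bounds on $\E[Y^n_t]$ and $\int_0^t\E[Y^n_sf_n(Y^n_s)]ds$ are the same in spirit as the paper's. Part (ii) is essentially the paper's argument.

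For the existence half of (iii) your plan is shakier than the paper's and may not close as written. Applying It\^o to $f(Y^n_t)$ gives the jump gain $-\E[f(Y^n_s)^2]$, but the drift contributes $\E[f(Y^n_s)]\E[f'(Y^n_s)]\leq C\E[f(Y^n_s)](1+\E[f(Y^n_s)])$, and Assumption~\ref{ass:2} only gives $f'\leq C(1+f)$ with no smallness restriction on $C$ (for $f(x)=x^\xi$ one has $C\approx\xi$). The resulting inequality is $\frac{d}{dt}\E[f(Y^n_t)]\leq(C-1)\E[f(Y^n_t)]^2+C\E[f(Y^n_t)]$, which does not close when $C\geq 1$. (Also, $xf'(x)\leq Cf(x)$ is not implied by Assumption~\ref{ass:2}.) The paper's argument is far simpler and needs no gain term: from the SDE itself, $Y_t\leq Y_0+\int_0^t(\lambda\E[Y_s]+\E[f(Y_s)])ds=Y_0+C(t)$ a.s., the deterministic $C(t)$ is locally bounded thanks to the $\int_0^t\E[Y_sf(Y_s)]ds$ bound from (i), and Remark~\ref{rk1}-(ii) ($f(x+A)\leq C_A(1+f(x))$) then yields $\E[f(Y_t)]\leq C_t(1+\E[f(Y_0)])$.

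The genuine gap is in (iii) uniqueness. You correctly locate the dangerous event (only $Y$ resets to $0$, at rate $f(Y)-f(\tilde Y)$, leaving residual distance $\tilde Y$), but your proposed fix --- a concave saturating $\varphi$ applied to the \emph{difference}, $\E[\varphi(Y_t-\tilde Y_t)]$ --- cannot work. Any function of $Y-\tilde Y$ alone treats $(Y,\tilde Y)=(1,0)$ exactly like $(Y,\tilde Y)=(100,99)$, yet in the second case both the jump rate and the residual are enormous, which is exactly where non-Lipschitzness bites. The paper's quantity is $\E[|H(Y_t)-H(\tilde Y_t)|]$ with $H(x)=f(x)+\arctan x$: an \emph{increasing} map applied to each process \emph{separately}, not a concave function of the gap. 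This is what makes the computation close: $|f(Y)-f(\tilde Y)|\leq|H(Y)-H(\tilde Y)|$ controls the McKean drift term $|\E[f(Y)]-\E[f(\tilde Y)]|$, and the jump contribution
\begin{equation*}
-(f(Y)\wedge f(\tilde Y))\,|H(Y)-H(\tilde Y)|+|f(Y)-f(\tilde Y)|\bigl(H(Y)\wedge H(\tilde Y)-|H(Y)-H(\tilde Y)|\bigr)
\end{equation*}
collapses to at most $\tfrac{\pi}{2}\,|H(Y)-H(\tilde Y)|$, the cancellation coming precisely from $H-f=\arctan$ being bounded. None of this structure is available if you only track $\varphi(|Y-\tilde Y|)$.
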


Let us mention that Assumption \ref{ass:2} can be slightly relaxed: if for example $f=f_1+f_2$ 
with $f_1$ satisfying Assumptions \ref{ass:1} and \ref{ass:2} and $f_2(x)=\int_0^x \psi(y)dy$ 
with $\psi \in C^1_c([0,\infty))$ nonnegative, then Theorem \ref{theo:2}-(iii) still holds true. In fact, what 
we really need
is that the conclusions of Lemma \ref{lem:1} below are satisfied.

\vip

Let us comment on the results of Theorem \ref{theo:2}. Point (i) is not hard: it is checked by compactness and is actually a consequence
of Theorem \ref{theo:6}-(i)-(ii) below. The only noticeable point
is that the condition $\E[Y_0]<\infty$ is sufficient to guarantee that indeed, 
$\int_0^t \E[Y_sf(Y_s)]ds <\infty$ for all $t\geq 0$, which 
is sufficient to handle a proof by compactness. 
Point (ii) is not very complicated and has already been proven in \cite{aaee}.
The only difficult point is to check that if $Y_0$ is bounded, then
$Y_t$ is {\it a priori} bounded for all $t$. Once this is seen, the function $f$ can be considered as if it was 
bounded and globally Lipschitz continuous. Finally, (iii) is much more delicate and goes clearly beyond the 
results of \cite{aaee}. Indeed, when computing the time derivative of $\E[|X_t-Y_t|]$, for $X$ and $Y$
two solutions to \eqref{eq:dynlimit}, some nonlinear terms appear:
there is no hope to conclude uniqueness by the Gronwall lemma.
One possibility is to use the famous $x|\log x|$ extension of the Gronwall lemma, but this requires
to have some bounds for something like $\sup_{[0,T]}\E[\exp(f(Y_t))]<\infty$, 
see Bolley, Ca\~nizo and Carrillo \cite{bcc} or \cite{fm} for such considerations, but this is
not very satisfying, since it requires the strong condition that $\E[\exp(f(Y_0))]<\infty$.
We thus search for a more convenient ``distance''. We first observe that when time-differentiating 
$\E[|f(X_t)-f(Y_t)|]$, the contribution of the most unpleasant term (the Poisson integral) is non-positive:
it gives exactly $-\E[|f(X_t)-f(Y_t)|^2]$, which is a very good point. However, the other terms cause problems
for small values of $X$, $Y$, if $f$ vanishes too fast at $0$ (e.g. $f(x)=x^\xi$ with $\xi \geq 2$). 
To overcome this difficulty, it actually suffices to work with $\E[|H(X_t)-H(Y_t)|]$, 
where $H(x)\simeq f(x)+ x\land 1$. Of course, it is more convenient to
use a smooth version of $x\land 1$, so that we will work with 
$H(x)=\arctan x + f(x)$. It is likely that we could also use $H(x)=\ell(x) + f(x)$,
with any smooth increasing function $\ell(x)$ behaving like $a x$ near $0$ (for some $a>0$) 
and tending to some constant $b>0$ as $x\to\infty$.

\subsection{Propagation of chaos}

We start with a general weak result. The set $\D(\R_+)$ of c\`adl\`ag functions on $\R_+$
is endowed with the topology of the Skorokhod convergence on compact time intervals,
see Jacod and Shiryaev \cite{js}.

\begin{theo}\label{theo:6}
Grant Assumption \ref{ass:1} and suppose that $\lambda \geq 0$.
Consider a probability distribution $g_0$
on $\R_+$ such that $\int_0^\infty y g_0(dy)<\infty$. For each $N\geq 1$,
consider the unique solution $(X^N_t)_{t\geq 0}$ to \eqref{eq:dyn} starting from
some i.i.d. $g_0$-distributed initial conditions $X^{N,i}_0$.

\vip

(i) The sequence of processes $(X^{N,1}_t)_{t\geq 0}$ is tight in $\D(\R_+)$.

\vip

(ii) The sequence of empirical measures $\mu_N=N^{-1}\sum_{i=1}^N \delta_{(X^{N,i}_t)_{t\geq 0}}$
is tight in $\cP(\D(\R_+))$.

\vip

(iii) Any limit point $\mu$ of $\mu_N$ a.s. belongs to $\cS:=\{\cL((Y_t)_{t\geq 0}) \, : \, (Y_t)_{t\geq 0}$
solution to \eqref{eq:dynlimit} with $\cL(Y_0)=g_0$ and satisfying $\int_0^t \E[Y_sf(Y_s)]ds <\infty$ 
for all $t\geq 0\}$.

\vip

(iv) If moreover (a) $g_0$ is compactly supported or (b)
$\int_0^\infty f(y)g_0(dy)<\infty$ and $f$ satisfies Assumption \ref{ass:2}, 
then $\mu_N$ goes in probability to 
$\cL((Y_t)_{t\geq 0})$, where $(Y_t)_{t\geq 0}$ is the unique solution to \eqref{eq:dynlimit}.
\end{theo}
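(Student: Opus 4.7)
The four parts form a chain: (ii) follows from (i) by a standard exchangeability argument; (iii) is the heart of the matter; and (iv) combines (iii) with the uniqueness statements of Theorem~\ref{theo:2}-(ii)-(iii). Everything rests on an $N$-uniform first moment estimate that I would derive first. Taking expectations in \eqref{eq:dyn}, the drift vanishes by exchangeability (since $\E[X^{N,1}_t]=\E[\bar X^N_t]$) and the Poisson compensators give
\[
\frac{d}{dt}\E[X^{N,1}_t]=-\E[X^{N,1}_t f(X^{N,1}_t)]+\frac{N-1}{N}\E[f(X^{N,1}_t)].
\]
Since $f$ is non-decreasing, $xf(x)\geq f(x)-f(1)$, so the right-hand side is at most $f(1)$ and $\sup_N\sup_{t\leq T}\E[X^{N,1}_t]\leq\E[Y_0]+f(1)T=:C_T$.

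\textbf{Tightness (i) and (ii).} For (i), I would decompose $X^{N,1}$ into its continuous finite-variation part, its pure-jump reset martingale, and the positive-jump contribution of size $1/N$ coming from the other neurons. The first is bounded in $L^1$ on a window of length $\delta$ by $2\lambda C_T\delta$ thanks to $C_T$; the two jump parts are compensated by integrals of $f(X^{N,1}_s)$ and $f(X^{N,j}_s)$ respectively, which I would handle by truncating $f$ at level $k$, applying Aldous--Rebolledo with the bounded modulus $y\mapsto y\wedge 1$, and then sending $k\to\infty$. This yields tightness of $(X^{N,1}_t)_{t\geq 0}$ in $\D(\R_+)$. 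Part (ii) follows from the classical fact that symmetry of the particle system together with tightness of a single coordinate implies tightness of $\mu_N$ in $\cP(\D(\R_+))$.

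\textbf{Identification (iii).} For $\phi\in C^1_b(\R_+)$ and $\nu\in\cP(\D(\R_+))$ with time-$s$ marginal $\nu_s$, set
\[
G_\phi(\nu,t,\omega)=\phi(\omega_t)-\phi(\omega_0)-\intot\Big\{\phi'(\omega_s)[\lambda\bar\nu_s-\lambda\omega_s+\overline{f(\nu_s)}]+f(\omega_s)[\phi(0)-\phi(\omega_s)]\Big\}ds,
\]
with $\bar\nu_s=\int y\,\nu_s(dy)$ and $\overline{f(\nu_s)}=\int f(y)\,\nu_s(dy)$. Membership $\mu\in\cS$ is equivalent to the requirement that $t\mapsto G_\phi(\mu,t,\omega)$ be a martingale under $\mu$ for $\phi$ in a countable dense family. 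It\^o's formula for $X^{N,i}$ combined with $\phi(x+1/N)-\phi(x)=\phi'(x)/N+O(1/N^2)$ identifies $G_\phi(\mu_N,t,X^{N,i})$ with a martingale $M^{N,i}_t$ plus an $O(1/N)$ correction. For $s_1<\cdots<s_k\leq s<t$ and bounded continuous $\psi$, the functional $F(\nu)=\int\psi(\omega_{s_1},\ldots,\omega_{s_k})[G_\phi(\nu,t,\omega)-G_\phi(\nu,s,\omega)]\nu(d\omega)$ evaluated at $\nu=\mu_N$ has vanishing expectation by the martingale property of the $M^{N,i}$, and vanishing variance by independence of the driving Poisson measures $\bN^i$; hence $F(\mu_N)\to 0$ in $L^2$, and passing to the limit identifies $\mu$ as a solution of the nonlinear martingale problem, i.e.\ $\mu\in\cS$.

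\textbf{Main obstacle and conclusion.} I expect the delicate point to be that $\nu\mapsto\overline{f(\nu_s)}$ is not continuous on $\cP(\D(\R_+))$ since $f$ is unbounded, which blocks the straightforward passage $F(\mu_N)\to F(\mu)$. I would handle this by truncation: first pass to the limit with $f_k=f\wedge k$ in place of $f$, then let $k\to\infty$, using that $\E\intot f(X^{N,1}_s)\indiq_{\{X^{N,1}_s\geq k\}}ds$ is small uniformly in $N$. This last estimate comes from the a priori identity above combined with Markov's inequality applied to $\E[X^{N,1}_s]\leq C_T$. The required finiteness $\intot\E[Y_sf(Y_s)]ds<\infty$ for the limit then follows by Fatou from the same identity. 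Finally, for (iv), Theorem~\ref{theo:2}-(ii)-(iii) forces $\cS$ to be a singleton $\{\cL((Y_t)_{t\geq 0})\}$ under either (a) or (b); combined with (ii) and (iii), every subsequence of $(\mu_N)$ has a further subsequence converging in distribution to this single deterministic law, which upgrades weak convergence to convergence in probability.
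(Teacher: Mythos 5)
Your overall strategy matches the paper's architecture closely: Aldous' criterion for (i), exchangeability for (ii), the nonlinear martingale problem for (iii), and uniqueness plus a subsequence argument for (iv). But there are two genuine gaps. The most important is in (iii): passing from $F(\mu_N)\to0$ in $L^2$ to $F(\mu)=0$ a.s.\ requires that $F$ be a.s.\ continuous at $\mu$, and you correctly flag the discontinuity of $\nu\mapsto\int f(y)\,\nu_s(dy)$ caused by $f$ being unbounded (truncation handles that). However, even with $f$ truncated, the evaluation maps $\omega\mapsto\omega_{s_i},\omega_s,\omega_t$ are not continuous on $\D(\R_+)$ at paths having a jump at one of these fixed times, and the processes at hand are genuine jump processes. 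One must therefore check separately that for each fixed $t$, a.s.\ $\mu(\{\gamma:\Delta\gamma(t)\ne0\})=0$; this is Step~2 of the paper's proof (an argument via an open set of paths with a big jump near $t$, the Portmanteau theorem, and the estimate $\int_{t-\e}^{t+\e}\E[f(X^{N,1}_u)]\,du\to0$ uniformly in $N$ as $\e\to0$). Truncating $f$ alone does not supply this; without it the limit identification is incomplete.

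Second, the moment bound you state is weaker than what you later invoke. You prove $\sup_N\sup_{[0,T]}\E[X^{N,1}_t]\le C_T$, but for the uniform smallness of $\E\int_0^t f(X^{N,1}_s)\indiq_{\{X^{N,1}_s\ge k\}}\,ds$ you need $\int_0^t\E[X^{N,1}_s f(X^{N,1}_s)]\,ds\le C_T$ (then bound by $C_T/k$); Markov's inequality applied to $\E[X^{N,1}_s]\le C_T$ alone cannot control this since $f$ is unbounded. Likewise, the compact containment condition (b) of Aldous' criterion requires tightness of $\sup_{[0,T]}X^{N,1}_t$, not just of the marginals, which your $L^1$ bound on $\E[X^{N,1}_t]$ does not give directly. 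Both the time-integrated $\E[Xf(X)]$ bound and a sup bound can be extracted from your a priori identity with a little more work (e.g.\ write $f(x)\le f(2)+\tfrac12 xf(x)$ and rearrange; for the sup, drop the negative jump term and compensate the positive ones). The paper instead works with a pathwise estimate (Proposition~\ref{prop:2}), which yields both pieces more directly and also underlies the deviation estimate needed later. Parts (ii) and (iv) as you sketch them are fine.
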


Points (i), (ii) and (iii) are not very difficult. The fact that $f$ is not globally Lipschitz continuous
is not really a problem when working by compactness. And of course, 
point (iv), which is usually called propagation of chaos, is a consequence of points (ii) and (iii)
and of the uniqueness results of Theorem \ref{theo:2}. Again, the above theorem has already been proven 
in the case
of compact support in \cite{aaee}; but the techniques employed in \cite{aaee} cannot be used in the 
general case where 
$g_0$ is not compactly supported. 
Under a few additional conditions, we get a 
quantified version of the above convergence, at least concerning the time marginals.

\begin{ass}\label{ass:3}
There is a constant $C$ such that for all $x,y\in\R_+$, $f(x+y)\leq C(1+f(x)+f(y))$.
\end{ass}

\begin{theo}\label{theo:4}
Grant Assumptions \ref{ass:1}, \ref{ass:2} and \ref{ass:3} and suppose that $\lambda \geq 0$ and that 
$\int_0^\infty f^2(y) g_0(dy)<\infty$. Consider, for each $N\geq 1$, 
the unique solution $(X^N_t)_{t\geq 0}$ to \eqref{eq:dyn} starting from
some i.i.d. $g_0$-distributed initial conditions $X^{N,i}_0$. Consider also
the unique solution $(Y^{N,1}_t)_{t\geq 0}$ to \eqref{eq:dynlimit}
starting from $Y_0=X^{N,1}_0$ and driven by the Poisson
random measure $\bN^1(ds,dz)$. The law of $(Y^{N,1}_t)_{t\geq 0}$ does not depend on $N$, and we denote by $g(t)
:=\cL(Y_t^{N,1})$.
Introduce  $H(x) = f(x) + \arctan (x)$. Then
for all $T>0$, there is a constant $C_T$, depending only on 
$T$, $\lambda$, $f$ and $\int_0^\infty f^2(y)g_0(dy)$ such that
$$
\sup_{[0,T]} \Big(\E\big[|X^{N,1}_t-Y^{N,1}_t|\big] + \E\big[|H(X^{N,1}_t)-H(Y^{N,1}_t)|\big]\Big) 
\leq \frac{C_T}{\sqrt N}.
$$
Assume furthermore that $\int_0^\infty y^{2+\e} g_0(dy)<\infty$ for some $\e>0$. Then for all $T>0$,
there is a constant $C_T$, depending only on 
$T$, $\lambda$, $f$, $\e$ and $\int_0^\infty [f^2(y)+y^{2+\e}]g_0(dy)$ such that
$$
\sup_{[0,T]} \E\Big[\cW_1\Big(\frac 1N \sum_{i=1}^N \delta_{X^{N,i}_t},g(t) \Big)\Big]\leq \frac{C_T}{\sqrt N}.
$$
\end{theo}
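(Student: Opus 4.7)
The plan is to use a synchronous coupling in which $X^{N,i}$ and $Y^{N,i}$ share the initial condition $X^{N,i}_0$ and the driving Poisson measure $\bN^i$. Since the $X^{N,i}_0$ are i.i.d.\ $g_0$-distributed and the $\bN^i$ are i.i.d., the $(Y^{N,i})_{1\le i\le N}$ form an i.i.d.\ family with common law $\cL((Y_t)_{t\ge 0})$, and by exchangeability of the coupled system it suffices to control
\[
\psi_N(t):=\E\big[|X^{N,1}_t-Y^{N,1}_t|\big]+\E\big[|H(X^{N,1}_t)-H(Y^{N,1}_t)|\big].
\]
I would apply the Meyer-Itô formula to the two absolute values, take expectations, and aim at an integral inequality of the form $\psi_N(t)\le C_T\int_0^t\psi_N(s)\,ds+C_T/\sqrt N$, then conclude by Gronwall. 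The derivative of $\psi_N$ naturally splits into three contributions: the electrical drift, the self-spike of $\bN^1$, and the external spikes of $\bN^j$ for $j\neq 1$ compared to the deterministic drift $\E[f(Y_s)]\,ds$ in \eqref{eq:dynlimit}.

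The drift part produces, after using $|\bar X^N_s-\bar Y^N_s|\le N^{-1}\sum_i|X^{N,i}_s-Y^{N,i}_s|$ and exchangeability, a term of size $C\psi_N(s)+\lambda\E|\bar Y^N_s-\E Y_s|$, with $\E|\bar Y^N_s-\E Y_s|=O(1/\sqrt N)$ by the $L^2$-law of large numbers for the i.i.d.\ $(Y^{N,i}_s)_i$, whose second moment is propagated from $\E[f^2(Y_0)]<\infty$ via Assumption \ref{ass:2}. The external-spike contribution is controlled by $\E\big|N^{-1}\sum_{j\neq 1}f(X^{N,j}_s)-\E f(Y_s)\big|$, which I split into an i.i.d.\ fluctuation $\E|N^{-1}\sum_j f(Y^{N,j}_s)-\E f(Y_s)|=O(1/\sqrt N)$ and $\E|N^{-1}\sum_j(f(X^{N,j}_s)-f(Y^{N,j}_s))|\le\E|H(X^{N,1}_s)-H(Y^{N,1}_s)|$ (using $|f(x)-f(y)|\le|H(x)-H(y)|$ and exchangeability); this is precisely where the inclusion of $f$ in $H$ pays off. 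The most delicate step is the self-spike of $\bN^1$: a direct integration in $z$ in the Meyer-Itô formula yields, for the $|H(X)-H(Y)|$ piece on the event $\{X^{N,1}\ge Y^{N,1}\}$, a contribution of the form $-\Delta_H f(X^{N,1})+\Delta_f H(Y^{N,1})$ with $\Delta_f=f(X^{N,1})-f(Y^{N,1})$ and $\Delta_H=H(X^{N,1})-H(Y^{N,1})$; expanding $H=f+\arctan$ extracts a favourable $-\Delta_f^2$ and a remainder handled by Assumption \ref{ass:2}. This negative quadratic term is exactly what is needed to absorb, via Cauchy-Schwarz, the bad positive contribution $|f(X^{N,1})-f(Y^{N,1})|(X^{N,1}\vee Y^{N,1})$ that appears in the self-spike piece of $d|X^{N,1}-Y^{N,1}|/dt$; the $\arctan$ summand in $H$ takes care of the region near the origin where $f$ itself may vanish too fast.

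Assembling the three contributions and invoking Assumption \ref{ass:2} to bound the remainders gives the Gronwall inequality and the first estimate. For the Wasserstein bound I use the triangle inequality
\[
\cW_1\Big(\tfrac1N\sum_{i=1}^N\delta_{X^{N,i}_t},g(t)\Big)\le \tfrac1N\sum_{i=1}^N|X^{N,i}_t-Y^{N,i}_t| + \cW_1\Big(\tfrac1N\sum_{i=1}^N\delta_{Y^{N,i}_t},g(t)\Big);
\]
the first term has expectation bounded by $C_T/\sqrt N$ by the first part together with exchangeability, and the second by $C_T/\sqrt N$ via the quantitative Fournier-Guillin estimate for empirical measures of i.i.d.\ samples on $\R$, applicable since $g(t)$ inherits a finite $(2+\e)$-moment from $g_0$ (propagated along the nonlinear flow using Assumption \ref{ass:2}). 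The main obstacle I anticipate is the precise bookkeeping of the self-spike term: verifying that the negative quadratic $-\E\Delta_f^2$ extracted from the $|H(X)-H(Y)|$ calculation really does dominate, uniformly in the state, the positive non-Lipschitz contribution coming from $|X-Y|$, and isolating the minimal structural hypotheses on $f$ (encoded in Assumption \ref{ass:2}) for which this absorption succeeds.
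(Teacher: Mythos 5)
Your high-level strategy matches the paper's: synchronous coupling, work with the distance $|H(x)-H(y)|$ with $H=f+\arctan$, extract the favourable $-\Delta_f^2$ term from the self-spike, Gronwall, then triangle inequality plus the Fournier--Guillin bound for the Wasserstein part. The self-spike analysis and the role of $\arctan$ near the origin are correctly understood.

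There is, however, a genuine gap that the phrase ``assembling the three contributions and invoking Assumption~\ref{ass:2} to bound the remainders gives the Gronwall inequality'' glosses over. When you differentiate $\E[|H(X^{N,1}_t)-H(Y^{N,1}_t)|]$, both the electrical drift and the external-spike compensator produce terms of the schematic form
$$\E\Big[\big|H'(X^{N,1}_s)-H'(Y^{N,1}_s)\big|\cdot \bar X^N_s\Big]\quad\text{and}\quad \E\Big[\big|H'(X^{N,1}_s)-H'(Y^{N,1}_s)\big|\cdot \frac1N\sum_{j} f(X^{N,j}_s)\Big],$$
and Lemma~\ref{lem:1}-(ii) only turns $|H'(X)-H'(Y)|$ into $C|H(X)-H(Y)|$. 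What is left is $\E\big[|H(X^{N,1}_s)-H(Y^{N,1}_s)|\cdot A^N_s\big]$ with $A^N_s=\bar X^N_s$ or $N^{-1}\sum_j f(X^{N,j}_s)$, a \emph{random, unbounded, correlated} multiplier. Such a term cannot be bounded by $C_T\,\E[|H(X^{N,1}_s)-H(Y^{N,1}_s)|]$ without further work, so your Gronwall inequality does not close as stated. The paper resolves this by introducing the stopping time $\tau_N^T=\inf\{t:\,N^{-1}\sum_i(f(X^{N,i}_t)+f(Y^{N,i}_t))\geq R_T\}$ (Lemma~\ref{lem:2}-(ii)), running the Gronwall argument on $[0,t\wedge\tau_N^T]$ where these empirical averages are bounded by $R_T$ almost surely, and then separately showing $\P(\tau_N^T\le T)\le C/N$ together with a moment bound to control the contribution of the event $\{\tau_N^T\le T\}$. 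This truncation is not an optional refinement; without it the non-Lipschitz character of $f$ makes the argument fail. A secondary but also real issue is your reliance on Meyer--Itô for $|H(X)-H(Y)|$: the incoming jumps of size $1/N$ have second-order corrections in $1/N$ that the absolute value does not control cleanly across the diagonal $H(X)=H(Y)$, which is why the paper replaces $|H(x)-H(y)|$ by the smooth regularization $a_N(x,y)=[N^{-1}+(H(x)-H(y))^2]^{1/2}$, whose second derivative (Lemma~\ref{tech}-(i)) gives exactly the $N^{-3/2}$ error one needs after summing over the $N-1$ neighbours.
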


The Monge-Kantorovich-Wasserstein distance between two probability measures $\mu$ and 
$\nu$ on $\R_+$ with finite expectations is defined by $\cW_1(\mu,\nu)=\inf\{\E[|U-V|]$, $\cL(U)=\mu$ and 
$\cL(V)=\nu\}$.
The moment condition $\int_0^\infty f^2(y) g_0(dy)<\infty$ is very reasonable: somewhere in the proof,
we will have to study the convergence of $N^{-1}\sum_{j=1}^N f(Y^{N,j}_t)$ to $\E[f(Y_t^{N,1})]$.
If we want a rate of convergence of order $1/\sqrt N$, such an assumption is needed.

\subsection{Large time behavior of the limit process}

First, we study the possible invariant measures.

\begin{theo}\label{theo:41}
Grant Assumption \ref{ass:1} and let $\lambda\geq 0$. Then 
the nonlinear equation \eqref{eq:dynlimit} has exactly two invariant probability measures
supported in $\R_+$. The first one is $\delta_0 $. The second one is of the form $g(dx)=g(x)dx$,
with $g:[0,\infty) \mapsto [0,\infty)$ defined as follows.

\vip

(i) If $\lambda > 0 ,$ then
$$ g (x) = \frac{p}{p + \lambda m - \lambda x } 
\exp \Big( - \int_0^x \frac{f(y) }{p + \lambda ( m - y ) } dy \Big) 
\indiq_{\{ 0 \le x < m + p/ \lambda \} },$$
where $p>0$ and $m>0$ are uniquely determined by the constraints
$\int_0^\infty g(dx)= 1$, $\int_0^\infty x g(dx)= m$.
Furthermore, we have $\int_0^\infty f(x) g(dx) = p$ and  $m +p/\lambda>1$.

\vip

(ii) If $\lambda =0 ,$ then 
$$ g(x) = \exp \Big( - \frac{1}{p}\int_0^x f(y) dy \Big) ,$$
where $p >0 $ is uniquely determined by the constraint $\int_0^\infty g(x) dx = 1 $. Furthermore,
it holds that $\int_0^\infty f(x)g(x) dx = p.$
\end{theo}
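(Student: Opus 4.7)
The plan is to identify any invariant probability measure via the stationary version of the weak PDE stated just after \eqref{eq:dynlimit}: $g\in\cP(\R_+)$ is invariant iff, for every $\phi\in C^1_b(\R_+)$,
\[
\int_0^\infty \Bigl\{[\phi(0)-\phi(x)]f(x) + \phi'(x)[a-\lambda x]\Bigr\}\, g(dx) = 0,
\]
where $a = p+\lambda m$, $p = \int f\, g$, $m = \int x\, g$. The Dirac $\delta_0$ is trivially invariant since $f(0)=0$. For any other invariant $g$, testing the identity against $\phi\in C^1_c((0,\infty))$ yields the distributional identity $\partial_x[(a-\lambda x)g] = -fg$ on $(0,\infty)$; since both sides must be measures, $g$ carries no atom on $(0,a/\lambda)$, and a flow argument (the drift $\dot x = a-\lambda x$ pushes trajectories inward to $a/\lambda$ while jumps send them to $0$, so no stationary mass can sit strictly beyond $a/\lambda$) confines the support to $[0,a/\lambda]$. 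Integration by parts in the original weak equation then forces $g$ to have a density solving the linear ODE
\[
(a-\lambda x)\, g'(x) = (\lambda - f(x))\, g(x),\qquad g(0) = p/a,
\]
the boundary value coming from the residual term $\phi(0)[p - a\,g(0)]=0$.

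Solving this ODE explicitly for $\lambda>0$ yields the stated formula
\[
g(x) = \frac{p}{a-\lambda x}\exp\Bigl(-\int_0^x\frac{f(y)}{a-\lambda y}\,dy\Bigr),\qquad 0\le x < a/\lambda.
\]
Integrability at the endpoint $x=a/\lambda$ holds because the inner integral diverges logarithmically with coefficient $f(a/\lambda)/\lambda > 0$, making $g(x)\sim C(a/\lambda-x)^{f(a/\lambda)/\lambda - 1}$ near $a/\lambda$. The self-consistency $\int fg = p$ is then automatic: integrating the ODE over $[0,a/\lambda]$ with integration by parts on the left yields $-p + \lambda\int g = \lambda - \int fg$, which together with $\int g = 1$ gives $\int fg = p$.

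The core of the argument is to show the constraints $\int g = 1$ and $\int xg = m$ determine $(p,m)$ uniquely, equivalently determine $a$ uniquely. With $\Phi_a(x) = \int_0^x f(y)/(a-\lambda y)\,dy$, set $F(a) = \int_0^{a/\lambda}(a-\lambda x)^{-1}e^{-\Phi_a(x)}dx$ and $M(a) = \int_0^{a/\lambda} x(a-\lambda x)^{-1}e^{-\Phi_a(x)}dx$. Normalization gives $p = 1/F(a)$, and combining with $a = p + \lambda m$ and $m = p M(a)$ reduces, via the algebraic identity $aF(a) - \lambda M(a) = \int_0^{a/\lambda}e^{-\Phi_a(x)}dx$, to the single scalar equation
\[
I(a) := \int_0^{a/\lambda} e^{-\Phi_a(x)}\,dx = 1.
\]
Since $\partial_a\Phi_a(x) = -\int_0^x f(y)(a-\lambda y)^{-2}dy < 0$ for each $x>0$ and the domain of integration grows with $a$, the function $I$ is continuous and strictly increasing on $(0,\infty)$ with $I(0^+)=0$ and $I(\infty)=\infty$, yielding a unique $a^*>0$ with $I(a^*)=1$. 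Moreover $\Phi_\lambda(x) > 0$ on $(0,1]$ forces $I(\lambda)<1$, hence by monotonicity $a^*>\lambda$, i.e.\ $m+p/\lambda = a^*/\lambda > 1$.

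For $\lambda = 0$ the ODE simplifies to $g'(x) = -p^{-1}f(x)g(x)$ on $[0,\infty)$ with $g(0)=1$, producing the stated exponential density. Normalization $\int g = 1$ becomes one strictly monotone equation in $p\in(0,\infty)$ with correct limits at $0$ and $\infty$, so $p$ is uniquely determined, and the substitution $u = p^{-1}\int_0^x f$ immediately gives $\int fg = p$. The step I expect to be most delicate is the regularity argument of the first paragraph, which upgrades the weak stationarity identity to the dichotomy ``$g=\delta_0$ or $g$ is an absolutely continuous density on $[0,a/\lambda)$''; the weak formulation alone does not directly rule out singular continuous or mixed atomic-singular components of the measure on $[0,a/\lambda]$, and some care with cut-off test functions and the flow structure of the drift is required.
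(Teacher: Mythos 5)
Your route is genuinely different from the paper's. The paper does not work with the stationary weak PDE directly. Instead it linearizes: for each fixed $a\geq 0$ it studies the auxiliary SDE (5.1 in Section \ref{sec:6}, labelled \eqref{La}) with drift $a-\lambda z$, shows it is a genuine Markov process with $0$ a positive Harris recurrent state, and therefore has a \emph{unique} invariant law $g_a$; the explicit formula for $g_a$ is then verified simply by plugging it into the generator identity $\int \mathcal{L}_a\phi\,dg_a=0$. All regularity questions evaporate: uniqueness of the invariant measure of \eqref{La} is obtained probabilistically, not from a distributional ODE, and the explicit density only enters as a \emph{candidate to be checked}. The nonlinear constraint then reduces to the scalar fixed-point equation $\Gamma(a)=\int_0^{a/\lambda}e^{-\Phi_a(x)}\,dx=1$, which is exactly the equation $I(a)=1$ you arrive at, and the monotonicity/endpoint argument you use for uniqueness of $a^*$ and for $a^*>\lambda$ is identical. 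Finally the paper closes the loop in the ``if'' direction cleanly: a stationary solution of \eqref{La} with $a=a_*$ has $\E[\lambda Y_t+f(Y_t)]\equiv a_*$ by the very definition of $a_*$, hence is also a solution of \eqref{eq:dynlimit}, so $g_{a_*}$ is invariant for the nonlinear equation.

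Two steps in your argument are gaps, not just ``delicate points to be polished.'' First, the one you flag yourself: passing from the stationary weak identity to the assertion that $g$ has a density solving the ODE $(a-\lambda x)g'=(\lambda-f)g$ with the boundary value $g(0)=p/a$ requires ruling out atomic and singular-continuous parts on $[0,a/\lambda]$. Your observation that $\partial_x[(a-\lambda x)g]=-fg$ as measures does not immediately exclude atoms: an atom of $g$ at $x_0\in(0,a/\lambda)$ produces on the left a Dirac whose mass is a jump of the BV function $(a-\lambda x)g$, and on the right a Dirac of mass $-f(x_0)g(\{x_0\})$, and nothing forces these to be inconsistent without further work on the flow/regularization. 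The paper's linearization removes this issue entirely. Second, and you do not flag this one: you open with ``$g$ is invariant iff the stationary weak PDE holds,'' but only the ``only if'' direction is automatic. The ``if'' direction (a measure solving the stationary identity is invariant for the nonlinear dynamics) requires exhibiting a stationary process with marginals $g$ that actually solves \eqref{eq:dynlimit}; under Assumption \ref{ass:1} alone, well-posedness of the nonlinear SDE is not available to you, so this is a genuine step, which the paper handles via the linear SDE as described above. You would need an analogue of that argument to conclude.
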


Starting from a (reasonable) non-trivial
initial condition, it is likely that $Y_t$ goes in law to $g$ as $t\to \infty$.
When $\lambda=0$, we can prove such a result under a few assumptions.

\begin{prop}\label{formal}
Grant Assumptions \ref{ass:1} and \ref{ass:2} and assume that $\lambda=0$. 
Suppose moreover that the solution $(Y_t)_{t\geq 0}$ to \eqref{eq:dynlimit} 
starts from $Y_0 \sim g_0 (x) dx $ where $g_0 \in C^1_b([0,\infty))$ satisfies 
$g_0(0)=1$, $\int_0^\infty f^2 (x) g_0 (x) dx 
< \infty $ and $\int_0^\infty |g_0'(x)| dx < \infty$.  
Denote by $g(t)$ the law of $Y_t$ and write $g$ for the invariant probability measure
defined in Theorem \ref{theo:41}-(ii). Then we have $\lim_{t\to\infty} \|g(t )-g\|_{TV}=0,$ where 
$ \|  \cdot \|_{TV} $ denotes the total variation distance. If furthermore there are $c>0$ and  
$\xi \geq 1$ such that $f(x)\geq c x^\xi$ for all $x\in [0,1]$, then we have the estimate
$\|g (t) - g \|_{TV} \leq C (1+t)^{-1/\xi}$.
\end{prop}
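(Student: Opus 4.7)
The plan is to analyse the forward PDE for the time-marginals. With $\lambda=0$, it reads $\partial_t g + p_t \partial_x g = -fg$, with boundary condition $g(t,0)=1$ and $p_t = \int_0^\infty f(y) g(t,y)\,dy$. I would solve this by the method of characteristics and then bootstrap the convergence of the drift $p_t$ toward the stationary value $p$. The compatibility condition $g_0(0)=1$ is tailored to the boundary value $g(t,0)=1$: without it, a discontinuity would propagate along the curve $\{x=P(t)\}$, with $P(t):=\int_0^t p_s\,ds$, and contribute a positive mass to the total-variation distance.

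Along characteristics one obtains, for $x\in[0,P(t)]$, $g(t,x) = \exp(-\int_0^x f(y)/p_{s(y)}\,dy)$, where $s(y)$ is the characteristic time parametrised by $P(s(y))-P(\tau)=y$, with $\tau=\tau(t,x)$ the time at which the characteristic reaching $(t,x)$ left the boundary. Since the stationary density is $g(x) = \exp(-\int_0^x f(y)/p\,dy)$, an elementary expansion of the exponential gives
$$|g(t,x)-g(x)| \lesssim g(x) \int_0^x f(y)\,|p_{s(y)}-p|\,dy,$$
provided one has an a priori lower bound $p_s\geq p_{\min}>0$ (itself to be established by a separate argument using $g(t,0)=1$ together with the regularity of $g_0$). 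For $x>P(t)$ the characteristic originates from the initial data, and the remaining contribution $\int_{P(t)}^\infty g(t,x)\,dx$ is the probability that a particle starting from $Y_0\sim g_0$ has not yet fired by time $t$; this decays super-polynomially once $P(t)\to\infty$, thanks to $f\to\infty$ together with the moment bound $\int f^2 g_0<\infty$.

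Combining the two pieces, $\|g(t)-g\|_{TV}$ is controlled by $\sup_{s\leq t}|p_s-p|$ plus a tail error; using $p_t-p=\int f\,(g(t,\cdot)-g)\,dx$ to plug the characteristic estimate back into itself yields a self-consistent inequality for $\delta(t):=|p_t-p|$ whose kernel is $f(y) g(y)$, which has finite integral $\int f g = p$. Iterating this inequality, together with the fast tail decay, gives first the qualitative conclusion $\delta(t)\to 0$ and hence $\|g(t)-g\|_{TV}\to 0$. To upgrade to the quantitative rate $(1+t)^{-1/\xi}$ under $f(x)\geq cx^\xi$ on $[0,1]$, I would insert the ansatz $\delta(t)\sim (1+t)^{-\alpha}$, use the local $\xi$-power growth of $f$ near zero to evaluate the integrals, and find that matching the scaling on both sides of the self-consistent inequality forces $\alpha=1/\xi$. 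The main obstacle will be to make this bootstrap rigorous and simultaneously control $\delta(t)$, the initial tail, and the a priori bounds $p_{\min}\leq p_s\leq p_{\max}$, without circular reasoning. The regularity assumptions on $g_0$, namely $g_0\in C^1_b$, $g_0(0)=1$ and $\int|g_0'|<\infty$, are precisely what is needed to justify differentiating along characteristics, to ensure matching at $\{x=P(t)\}$, and to propagate enough smoothness for the self-consistent equation to close.
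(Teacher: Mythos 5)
Your route is genuinely different from the paper's. The paper does not track the scalar $p_t$ at all; instead it works directly with the $L^1$ (equivalently total-variation) distance as a Lyapunov functional. Starting from the explicit characteristic formula for $g(t,\cdot)$ (which you also use), it establishes enough regularity to pass to the strong PDE, applies a DiPerna--Lions type commutator lemma to justify differentiating $t\mapsto\|g(t)-g\|_{L^1}$, and arrives at
$$\frac{d}{dt}\|g(t)-g\|_{L^1}\;\le\;-\int_0^\infty f\,|g(t)-g|\;+\;|p_t-p|\;=\;-2\min\!\left\{\int f\,(g(t)-g)_+,\;\int f\,(g(t)-g)_-\right\}.$$
Using $g(t,x)\le 1$ on $[0,A_1]$ for $t\ge 1$ (from the characteristic formula) and $g\le 1$, together with $f$ nondecreasing, they lower-bound the $\min$ by $f(\varepsilon)\bigl[\|g(t)-g\|_{L^1}/2-\varepsilon\bigr]$, optimize in $\varepsilon$, and obtain an \emph{autonomous} differential inequality $\frac{d}{dt}\|g(t)-g\|_{L^1}\le-\Phi(\|g(t)-g\|_{L^1})$ with $\Phi(x)=\tfrac{x}{2}f\bigl(A_1\wedge \tfrac{x}{4}\bigr)$, which vanishes only at $0$. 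Convergence and the rate follow by an ODE comparison, with $\Phi(x)\gtrsim x^{\xi+1}$ under $f(x)\ge cx^\xi$ giving the $(1+t)^{-1/\xi}$ rate. This sidesteps the need for any a priori lower bound on $p_t$ and any self-consistent iteration.

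Your proposed bootstrap has genuine gaps that I do not see how to close. First, the estimate $|g(t,x)-g(x)|\lesssim g(x)\int_0^x f(y)|p_{s(y)}-p|\,dy$ requires controlling $e^{-\min(U,V)}$ where $U=\int_0^x f/p_{s(y)}$ and $V=\int_0^x f/p$; to conclude $e^{-\min(U,V)}\lesssim g(x)=e^{-V}$ you already need $p_{s(y)}$ bounded below away from zero \emph{and} uniformly comparable to $p$ --- which is essentially the conclusion you are trying to reach, not an input. You flag the lower bound $p_s\ge p_{\min}>0$ as ``to be established by a separate argument'', but that separate argument is the crux of the matter and is not supplied. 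Second, even granting the a priori bound, the self-consistent inequality for $\delta(t)=|p_t-p|$ has a Volterra kernel whose mass is not visibly less than one (the computation $\int_0^\infty f(x)g(x)\int_0^x f(y)\,dy\,dx=p^2$, with a Lipschitz prefactor of order $1/(p\,p_{\min})$, gives overall constant of order $p/p_{\min}\ge 1$), so straight iteration of the inequality does not produce decay. Some additional structure would have to be extracted from the $s$-dependence of the kernel (the characteristic origin time $s(y)$ concentrates near $t$), but you do not develop this and it is not obvious it closes. The paper's monotone $L^1$-dissipation argument avoids all of these circularity and contractivity issues; I would strongly recommend switching to it.
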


Our proof, which is probably not optimal, 
relies on the use of the {\it strong version} of the PDE satisfied by $g(t)$. The 
regularity conditions on $g_0$, as well as the {\it structure} condition $g_0(0)=1$, will imply
that $g(t)$ has a sufficiently regular density. As can be seen in the next subsection
(see also \eqref{tacc} in Subsection \ref{xxx}),
if $g_0(0)\ne 1$, then 
$g(t,y)$ will be discontinuous for all $t>0$ 
(it will have one jump at some value $y_t \in (0,\infty)$ depending on $t$).

\vip

When $\lambda>0$, the situation is more intricate and we have not been able to prove the convergence
to equilibrium. One reason is that the non-degenerate invariant probability measure is more complicated,
compactly supported and possibly not continuous at the right extremity of its support. 
In any case, the computation handled to treat the case $\lambda=0$ does not extend.
A natural approach would be to show first that $\lim_{t\to\infty} \E[\lambda Y_t+f(Y_t)]$ exists.
However, $\E[\lambda Y_t+f(Y_t)]$ does not solve a closed equation, and we did not succeed.
The only result we are able to prove is that 
$Y_t$ cannot go in law to the invariant measure $\delta_0$. 
Our proof, which was as usual elaborated in the case where $f(x)=x^\xi$, actually extends to the following
situation.

\begin{ass}\label{ass:4}
(i) It holds that $\limsup_{x\to\infty} [f'(x)/f(x)] < 1$.

\vip

(ii) There are $\xi\geq 1$, $\zeta \geq \xi -1$ and some constants $0<c<C$ such that 
$cx^\xi \leq f(x) \leq C(x^{\xi-1}+x^\zeta)$.
\end{ass}

\begin{prop}\label{paszero}
Let $\lambda\geq 0$ and grant Assumptions \ref{ass:1} and \ref{ass:2}. Assume that
$\P(Y_0=0)<1$ and that $\E[f^2(Y_0)]<\infty$ and 
consider the unique solution $(Y_t)_{t\geq 0}$ to \eqref{eq:dynlimit}.
If $\lambda>0$, grant moreover Assumption \ref{ass:4} and suppose that $\E[Y_0^{\zeta+1}]<\infty$.
Then
$Y_t$ does not go to $0$ in law as $t\to\infty$.
\end{prop}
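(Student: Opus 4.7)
The plan is by contradiction: assume $\cL(Y_t)\Rightarrow \delta_0$ and derive an impossibility. As a preliminary step, I would propagate moments. It\^o's formula applied to $\phi(x)=f^2(x)$, combined with Assumption~\ref{ass:4}(i) (which produces a favorable sign when comparing $\E[f'(Y_t)f(Y_t)]$ to $\E[f^2(Y_t)]$) together with Gronwall, gives $\sup_{t\geq 0}\E[f^2(Y_t)]<\infty$. When $\lambda>0$, applying It\^o to $\phi(x)=x^{\zeta+1}$ and invoking Assumption~\ref{ass:4}(ii) (so that the killing term $-\E[Y_t^{\zeta+1}f(Y_t)]\leq -c\E[Y_t^{\zeta+1+\xi}]$ dominates the positive contributions) yields $\sup_{t\geq 0}\E[Y_t^{\zeta+1}]<\infty$. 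Uniform integrability then converts the assumed weak convergence into $u_t:=\E[Y_t]\to 0$ and $v_t:=\E[f(Y_t)]\to 0$.

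The key identity is obtained by taking expectations in \eqref{eq:dynlimit}: for every $\lambda\geq 0$,
$$u_t=u_0+\int_0^t \bigl(v_s-\E[Y_sf(Y_s)]\bigr)\,ds,$$
since the mean-field drift $-\lambda(Y_s-u_s)$ has zero expectation. Since $u_0>0$ by hypothesis and $u_t\to 0$, this forces $\int_0^\infty (\E[Y_sf(Y_s)]-v_s)\,ds=u_0>0$.

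For $\lambda=0$, I would work with the strong form of the PDE for $g(t,x)$, namely $\partial_t g+v_t\partial_x g=-f(x)g$ with boundary condition $g(t,0)=1$. The method of characteristics on the region $\{x\leq V_t\}$, with $V_t:=\int_0^t v_s\,ds$, yields $g(t,x)\leq 1$ throughout this region and therefore $\P(Y_t\leq \e)\leq \e$ for every $\e\leq V_t$. Before exploiting this, I first establish the structural fact $V_\infty=\infty$: if $V_\infty<\infty$, then the compensator $\int_0^\infty \E[f(Y_s)]ds=V_\infty$ is finite, so only finitely many firings occur a.s., and after the last firing time $T_L$ (which is a.s.\ finite on $\{Y_0>0\}$ because otherwise $\int_0^\infty f(Y_0+V_s)ds=\infty$ would force firing) the deterministic drift $Y_t=V_t-V_{T_L}$ converges to $V_\infty-V_{T_L}>0$, making $f(Y_t)$ bounded below by a positive constant on $(T_L,\infty)$ and thereby forcing additional firings with probability one---a contradiction with $T_L$ being the last. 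Hence $V_\infty=\infty$ holds unconditionally; fixing any $\e_0\in(0,1)$ and choosing $t$ large enough that $V_t\geq \e_0$, I obtain $\P(Y_t\leq\e_0)\leq\e_0<1$, which contradicts $Y_t\Rightarrow 0$ (the latter would force $\P(Y_t\leq\e_0)\to 1$).

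For $\lambda>0$, the density bound fails: the characteristic equations $\dot x=a_t-\lambda x$ together with the extra $\lambda g$ term in the PDE produce an $e^{\lambda t}$ growth factor in $g$ along characteristics. Here I would rely exclusively on moment estimates and Assumption~\ref{ass:4}. Splitting $\E[Y_tf(Y_t)]\leq\delta v_t+\E[Y_tf(Y_t)\mathbf{1}_{Y_t>\delta}]$ and bounding the second term via H\"older together with Markov's inequality $\P(Y_t>\delta)\leq v_t/f(\delta)$ and the uniform bound on $\E[(Y_tf(Y_t))^{1+\eta}]$, the goal is to reach an estimate $\E[Y_tf(Y_t)]\leq c\,v_t$ with $c<1$ for $t$ large, which gives $u_t'\geq (1-c)v_t>0$ eventually and contradicts $u_t\to 0$. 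The main obstacle is the calibration of this inequality: the naive H\"older bound is not tight enough on its own, and one must genuinely use the restoring drift $-\lambda(Y_t-u_t)$ together with Assumption~\ref{ass:4}(i) (which forbids exponential-type growth of $f$ and hence controls $\E[f'(Y_t)]$ by $v_t$) to prevent the $f$-weighted measure from concentrating away from the origin, thereby ruling out $\E[Y_tf(Y_t)]/v_t$ staying bounded below.
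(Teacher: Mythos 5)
Your $\lambda=0$ argument essentially coincides with the paper's: the density formula of Theorem \ref{theo:limitdensity} gives $g(t,x)\leq 1$ on $[0,\varphi_{0,t}(0))$, and $\varphi_{0,t}(0)\geq\int_0^1 a_s\,ds>0$ for $t\geq 1$ already forces $\P(Y_t\leq\e_0)\leq\e_0<1$ for a fixed $\e_0\in(0,1)$, contradicting $Y_t\Rightarrow\delta_0$. Your detour through $V_\infty=\infty$ is not needed (a fixed positive lower bound suffices), but it is harmless.

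Two problems remain. First, the preliminary uniform-in-time moment step is overstated: Gronwall gives bounds growing with $t$, not uniform ones. What Lemma \ref{estap2} actually exploits is the dissipative differential inequality $\frac{d}{dt}\E[f^q(Y_t)]\leq C-c\,\E[f^q(Y_t)]^{1+1/q}$, whose superlinear absorption term caps the moment. Moreover Assumption \ref{ass:4}-(i) only supplies some $q>1$ with $q\limsup_{x\to\infty} f'/f<1$, which need not equal $2$, so the claim $\sup_t\E[f^2(Y_t)]<\infty$ is not available in general.

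Second, and decisively, the $\lambda>0$ case is left open, as you acknowledge. The target $\E[Y_t f(Y_t)]\leq c\,v_t$ with $c<1$ cannot be extracted from H\"older, Markov and moment bounds alone: the $f$-weighted mean $\int yf(y)g(t,dy)/\int f(y)g(t,dy)$ can stay bounded away from $0$ even when $g(t)\Rightarrow\delta_0$ (take $g(t)=(1-\e_t)\delta_0+\e_t\delta_1$, $\e_t\to 0$), so ruling this scenario out genuinely requires pathwise/dynamical information rather than moment control. The paper's actual proof proceeds differently: from Lemma \ref{exp}-(i), $Y_t=e^{-\lambda t}Y_0\indiq_{\{\tau_t=0\}}+\varphi_{\tau_t,t}(0)$ with $\tau_t$ the last jump time, and since $a_t\to 0$ forces $\varphi_{\tau_t,t}(0)\leq 1/2$ eventually, one lower-bounds $(d/dt)\E[Y_t]=\E[(1-Y_t)f(Y_t)]$ by $I_t-J_t$, where $I_t,J_t$ restrict to $\{\varphi_{0,t}(Y_0)<3/4\}$ and $\{\varphi_{0,t}(Y_0)>1\}$ and carry the survival factor $\kappa_{0,t}(Y_0)$. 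One then shows $\liminf_t e^{\lambda\xi t}\kappa_{0,t}(0)^{-1}I_t>0$ and $\limsup_t e^{\lambda\xi t}\kappa_{0,t}(0)^{-1}J_t=0$; the two-sided bounds $cx^\xi\leq f(x)\leq C(x^{\xi-1}+x^\zeta)$ of Assumption \ref{ass:4}-(ii) and the hypothesis $\E[Y_0^{\zeta+1}]<\infty$ enter precisely here. Your proposal does not locate this mechanism and would need a substantially new idea to close the argument.
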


\subsection{Shape of the time-marginals of the nonlinear SDE} 
The next theorem shows that random spiking creates density near $0$, even if the system starts
from a singular initial condition (see also Theorem 2 of \cite{aaee} in the case of a smooth initial condition). 

\begin{theo}\label{theo:limitdensity}
Let $\lambda\geq 0$, grant Assumptions \ref{ass:1} and \ref{ass:2} and suppose that $\E[f^2(Y_0)]<\infty$ and 
that $\P(Y_0=0)<1$. Consider the unique solution $(Y_t)_{t\geq 0}$ to \eqref{eq:dynlimit},
set $p_t=\E [ f(Y_t) ]$, $a_t = \lambda \E [ Y_t] + \E [ f(Y_t) ]$ and denote by $g(t)$ the law of $Y_t$.
The functions $t\mapsto a_t$ and $t\mapsto p_t$ are continuous and positive on $[0,\infty)$.
Introduce the deterministic flow as follows: for $x \in [0,\infty)$
and $0\leq s \leq t$, 
\begin{equation*}
\varphi_{s, t } (x) = e^{ - \lambda (t-s) } x + \int_s^t e^{ - \lambda ( t-u) } a_u du.
\end{equation*}
For $t>0$ fixed and $y\in [0,\varphi_{0,t}(0)]$, let $\beta_t(y) \in [0,t]$ be uniquely determined
by $\varphi_{\beta_t(y),t}(0)=y$. For $y \geq \varphi_{0,t}(0)$, let $\gamma_t(y)
= (y-\varphi_{0,t}(0))e^{\lambda t}$, which satisfies $\varphi_{0,t}(\gamma_t(y))=y$.
It holds that for any $t>0$,
\begin{align*}
g(t,dy)=&\frac{p_{\beta_t(y)}}{a_{\beta_t(y)}}
\exp\Big(\int_{\beta_t(y)}^t (\lambda - f(\varphi_{\beta_t(y),s}(0)))ds \Big) \indiq_{\{y\in[0,\varphi_{0,t}(0))\}}dy\\
&+ \exp\Big(-\intot f(\varphi_{0,s}(\gamma_t(y)))ds   \Big) 
\indiq_{\{y\in [\varphi_{0,t}(0),\infty)\}} (g_0 \circ \gamma_t^{-1}) (dy).
\end{align*}
In particular, since $\beta_t(0)=t$, the density of $g(t)$ at $0$ is given by $g(t, 0) = p_t/a_t$.
\end{theo}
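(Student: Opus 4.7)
The plan is to exploit the piecewise-deterministic-Markov structure of \eqref{eq:dynlimit}. With $p_t=\E[f(Y_t)]$ and $a_t=\lambda\E[Y_t]+p_t$ regarded as fixed continuous functions of time, \eqref{eq:dynlimit} becomes a time-inhomogeneous linear SDE driven by $\bN$: between jumps $\dot Y_t=a_t-\lambda Y_t$, so $Y_t=\varphi_{s,t}(Y_s)$; at jump times $Y_{t-}$ is reset to $0$, with instantaneous (predictable) rate $f(Y_{t-})$.

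I would first establish the continuity and positivity of $a_t,p_t$. Under Assumption \ref{ass:2} with $\E[f^2(Y_0)]<\infty$, the a priori bound $\sup_{[0,T]}\E[f^2(Y_s)]<\infty$ (of the kind already needed in the proof of Theorem \ref{theo:4}) provides uniform integrability, and continuity of $t\mapsto\E[Y_t]$ and $t\mapsto p_t$ follows by dominated convergence in the integral form of \eqref{eq:dynlimit} taken against the test functions $y$ and $f(y)$ (truncated if needed). In particular $a_s$ is bounded on $[0,t]$. For positivity, since $\P(Y_0=0)<1$ one can pick $0<\e<K$ with $\P(Y_0\in[\e,K])>0$; on that event $f(\varphi_{0,s}(Y_0))$ is bounded on $[0,t]$ by a deterministic $M<\infty$, so Poisson-thinning gives positive probability to the event ``no jump in $[0,t]$'', whence $Y_t\geq\e e^{-\lambda t}$ on an event of positive probability, forcing $p_t>0$ and $a_t\geq p_t>0$.

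For the density formula, I would decompose according to the last jump time $T^*\in[0,t]\cup\{\emptyset\}$. On $\{T^*=\emptyset\}$ one has $Y_t=\varphi_{0,t}(Y_0)\in[\varphi_{0,t}(0),\infty)$, and conditionally on $Y_0=x$ this event has probability $\exp(-\intot f(\varphi_{0,s}(x))ds)$ by thinning; since $\varphi_{0,t}$ is an affine $C^1$-diffeomorphism from $[0,\infty)$ onto $[\varphi_{0,t}(0),\infty)$ with inverse $\gamma_t$, push-forward of $g_0$ yields the second summand of the statement. On $\{T^*=\tau\in(0,t]\}$, $Y_t=\varphi_{\tau,t}(0)\in[0,\varphi_{0,t}(0))$, and $\partial_\tau\varphi_{\tau,t}(0)=-e^{-\lambda(t-\tau)}a_\tau<0$ makes this map a $C^1$-diffeomorphism $(0,t)\to(0,\varphi_{0,t}(0))$ with inverse $\beta_t$. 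The key computation is that $\P(T^*\in d\tau)=p_\tau\exp(-\int_\tau^t f(\varphi_{\tau,s}(0))ds)\,d\tau$: the factor $p_\tau\,d\tau$ is the unconditional probability of a jump in $(\tau,\tau+d\tau)$ (since $\E[f(Y_\tau)]=p_\tau$), and, given such a jump, $Y$ is reset to $0$ and then follows the deterministic curve $s\mapsto\varphi_{\tau,s}(0)$ until the next jump, so the conditional probability of no further jump in $(\tau,t]$ is exactly the exponential. Changing variables with Jacobian $|d\tau/dy|=e^{\lambda(t-\tau)}/a_\tau$ at $\tau=\beta_t(y)$, and absorbing the factor $e^{\lambda(t-\beta_t(y))}$ into the exponential via $\int_{\beta_t(y)}^t\lambda\,ds=\lambda(t-\beta_t(y))$, produces the first summand of the statement.

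The main obstacle is making the identity $\P(T^*\in d\tau)=p_\tau\exp(\cdots)\,d\tau$ fully rigorous at the level of the nonlinear law: because $a_\cdot,p_\cdot$ depend on $\cL(Y)$, this is not a genuine Markov conditional-probability argument. The cleanest route is to freeze $a_\cdot,p_\cdot$ as continuous functions (as produced by the first step) and apply the thinning argument to the resulting time-inhomogeneous PDMP with drift $a_t-\lambda y$ and jump rate $f$; the law of this PDMP has one-dimensional marginals given by the displayed formula. Finally, to identify this with $g(t)$, one verifies that the measure produced by the formula satisfies the weak PDE written just after \eqref{eq:dynlimit} with initial datum $g_0$ and invokes the uniqueness part of Theorem \ref{theo:2}-(iii).
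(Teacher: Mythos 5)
Your main line of attack—decompose by the last jump time $T^*$, identify its law as $\P(T^*=\emptyset)=\E[\kappa_{0,t}(Y_0)]$ and $\P(T^*\in d\tau)=p_\tau\kappa_{\tau,t}(0)\,d\tau$, and change variables via $\beta_t$ and $\gamma_t$—is exactly the paper's route (Lemma \ref{exp}, Proposition \ref{loidetau}, and the final change-of-variables computation in Section \ref{sec:7}). Your heuristic derivation of the density of $T^*$ and the Jacobian computation match the paper's argument.

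However, the obstruction you flag is not actually there. Once a solution $(Y_t)_{t\ge0}$ of \eqref{eq:dynlimit} is fixed, the map $t\mapsto a_t$ \emph{is} a deterministic function; the process $Y$ itself is then a genuine time-inhomogeneous PDMP with drift $a_t-\lambda y$ and jump rate $f$, driven by the Poisson measure $\bN$, and conditional-probability arguments in its own filtration are perfectly legitimate. This is exactly what the paper exploits: it conditions on $\cF_s=\sigma(Y_0,\bN|_{[0,s]})$, computes $\P(J_t=J_s\mid\cF_s)=\kappa_{s,t}(Y_s)$ and $\P(J_s>J_{s-h})=1-\E[\kappa_{s-h,s}(Y_{s-h})]$, and passes to the limit $h\downarrow0$. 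The one piece you omit, and which the paper isolates as Lemma \ref{cont}-(ii), is the quantitative statement $\lim_{h\downarrow0}h^{-1}\E[1-\kappa_{t-h,t}(Y_{t-h})]=p_t$, which makes the heuristic ``$p_\tau\,d\tau$ is the probability of a jump in $(\tau,\tau+d\tau)$'' rigorous; its proof uses the continuity of $p$, a second-order bound on $1-\kappa$, and the $L^2$-moment $\E[f^2(Y_0)]<\infty$.

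Your proposed fallback (verify that the displayed formula satisfies the weak PDE with initial datum $g_0$ and then invoke Theorem \ref{theo:2}-(iii)) does not close the gap as stated. Theorem \ref{theo:2}-(iii) gives pathwise uniqueness for the nonlinear SDE \eqref{eq:dynlimit} among processes with $\sup_{[0,T]}\E[f(Y_s)]<\infty$; it does not directly yield uniqueness of measure-valued solutions of the nonlinear weak PDE (a priori, a weak PDE solution need not arise as the marginal of an SDE solution). To make that route work you would instead need uniqueness for the \emph{linear} transport-jump PDE with the frozen coefficients $(a_t,p_t)$ in a suitable class of measure-valued solutions, which is plausible but is an additional result not established in the paper and not obviously contained in Theorem \ref{theo:2}.
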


\subsection{Plan of the paper}
Section \ref{sec:2} consists of collecting some 
useful a priori bounds for the particle system and the limit process. 
In Section \ref{sec:3}, we check the path-wise uniqueness of the limit
process. Section \ref{sec:4} is devoted to the proof of Theorem \ref{theo:2},
the propagation of chaos without rate of convergence.
Section \ref{sec:5} shows the quantified propagation of chaos result. In Section \ref{sec:6},
we investigate the possible invariant distributions of the limit process.
The shape of the time-marginals is studied in Section \ref{sec:7},
in which we also prove the non-extinction result (Proposition \ref{paszero}) and the trend to equilibrium
when $\lambda=0$ (Proposition \ref{formal}).

\vip

\subsection{Constants} In the whole paper, $C$ stands for a (large) finite constant
and $c$ stands for a (small) positive constant. Their values may change from line to line.
They are 
allowed to depend only on $f,\lambda$ and $g_0$, any other dependence will be 
indicated in subscript. For example, $C_T$ is a finite constant depending only on
$f,\lambda,g_0$ and $T$.

\section{A priori bounds}\label{sec:2}
The aim of this section is to establish some fundamental  bounds for the particle system 
and for the limit process. Before that, we proceed to some elementary considerations.

\begin{rem}\label{rk1} Grant Assumptions \ref{ass:1} and \ref{ass:2} and additionally
Assumption  \ref{ass:3}
for point (iv).

\vip
(i) There is $c>0$ such that $f(x) \geq c x$ for all $x \geq 1$.

\vip

(ii) For all $A>0$, there is $C_A>0$ such that for all $x\geq 0$, $f(x+A)\leq C_A(1+f(x))$.

\vip

(iii) There is $C>0$ such that $f(x)\leq C\exp(Cx)$ for all $x\geq 0$.

\vip

(iv) For all $A>0$, there is $C_A>0$,  
$f(A(x+y)) \leq C_A(1+f(x)+f(y))$ for all $x,y\geq 0$.
\end{rem}

\begin{proof}
Point (i) is obvious since $f$ is convex increasing and since $f(0)=0$. 
Point (iii) is easily checked using that $f$ is increasing as well as point (ii) with $A=1$.
Point (iv) is an immediate consequence of Assumption \ref{ass:3}.
To check (ii), we will prove that there is $a>0$ such that $f(x+a) \leq 2f(x)+1$ for all $x\geq 0$,
which clearly suffices.
By Assumption \ref{ass:2}, 
there is $B>0$ such that $f'(x)\leq B(1+f(x))$ for all $x\geq 0$. Fix $a:=1/(2B)$ and write
$f(x+a)= f(x)+\int_x^{x+a} f'(y)dy \leq f(x)+ a B (1+\sup_{[x,x+a]}f) = f(x)+ a B (1+f(x+a)) =f(x)+(1+f(x+a))/2$,
whence $f(x+a) \leq 2f(x)+1$ as desired.
\end{proof}

We now study the limit equation.

\begin{prop} \label{estap}
Grant Assumption \ref{ass:1}, suppose that $\lambda \geq 0$ and that $\E[Y_0]<\infty$.
There is a constant $C>0$ depending only on $\lambda$, $f$ and $\E[Y_0]$ such that a
solution $(Y_t)_{t\geq 0}$ to \eqref{eq:dynlimit}
{\rm a priori} satisfies
\begin{align}
&\hbox{a.s., for all $t\geq 0$,} \quad Y_t \leq Y_0+C(1+t),\label{ttl1}\\
&\hbox{for all $t\geq 0$,}  \intot \E[Y_s f(Y_s)] ds \leq 2\E[Y_0]+ 2f(2)t. \label{ttl2}
\end{align}
\end{prop}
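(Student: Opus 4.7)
The plan is to establish \eqref{ttl2} first, and then to use it, together with a pathwise domination argument, to derive \eqref{ttl1}. The unifying observation is that the Poisson jumps in \eqref{eq:dynlimit} only decrease $Y$ (each spike resets it to $0$), so every a priori upper bound reduces to controlling the mean drift $a_s := \lambda \E[Y_s] + \E[f(Y_s)]$.

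For \eqref{ttl2}, I would take expectation in \eqref{eq:dynlimit}. The mean-zero term $-\lambda(Y_s - \E[Y_s])$ disappears, leaving the identity $\E[Y_t] = \E[Y_0] + \intot \E[(1 - Y_s) f(Y_s)]\, ds.$ Since $Y_t \geq 0$, rearranging gives $\intot \E[Y_s f(Y_s)]\, ds \leq \E[Y_0] + \intot \E[f(Y_s)]\, ds$. I would then apply the elementary inequality $f(y) \leq f(2) + (y/2) f(y)$, valid for all $y \geq 0$ by splitting at $y = 2$ and using monotonicity of $f$, to deduce $\intot \E[f(Y_s)]\, ds \leq f(2) t + \tfrac{1}{2} \intot \E[Y_s f(Y_s)]\, ds$. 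Substituting into the previous inequality and absorbing the $\tfrac{1}{2}$ term yields \eqref{ttl2}.

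For \eqref{ttl1}, I would first note that the identity for $\E[Y_t]$ above, combined with the bound $(1 - y) f(y) \leq f(1)$ for all $y \geq 0$ (bounded by $f(1)$ on $[0,1]$, nonpositive on $[1, \infty)$), gives the preliminary linear-in-$t$ estimate $\E[Y_s] \leq \E[Y_0] + f(1) s$. Next, between jumps $Y$ satisfies the affine ODE $\dot Y_s = -\lambda Y_s + a_s$, and each jump resets $Y$ to $0$; so the natural dominator is the jump-free solution $\tilde Y_t := e^{-\lambda t} Y_0 + \intot e^{-\lambda (t-u)} a_u\, du$. Setting $Z_t := \tilde Y_t - Y_t$ and subtracting the two defining integral equations yields $Z_t = J_t - \lambda \intot Z_s\, ds$, where $J_t := \intot\int_0^\infty Y_{s-}\indiq_{\{z \leq f(Y_{s-})\}}\bN(ds,dz) \geq 0$ is the cumulative spike height; by variation of constants $Z_t = \intot e^{-\lambda (t-s)}\, dJ_s \geq 0$, so $Y_t \leq \tilde Y_t$ pathwise. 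It then remains to bound the two pieces of $\intot e^{-\lambda(t-u)} a_u\, du$ linearly in $t$: the $\lambda \E[Y_u]$ part is controlled by $\E[Y_0] + f(1) t$ (using that $\lambda e^{-\lambda (t-u)}\, du$ has total mass at most $1$ on $[0,t]$ and that $\E[Y_u] \leq \E[Y_0] + f(1) u$), and the $\E[f(Y_u)]$ part by $\E[Y_0] + 2 f(2) t$ (a direct consequence of \eqref{ttl2} and the $f(y) \leq f(2) + (y/2) f(y)$ trick). Combining these gives $Y_t \leq Y_0 + C(1+t)$.

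The only delicate step is formalizing the pathwise domination $Y_t \leq \tilde Y_t$. One can either use the integral-equation argument for $Z_t$ above, or, more directly, observe that between consecutive spikes $\dot Z = -\lambda Z$ (so $Z$ stays nonnegative once it is nonnegative), while at each spike $Z$ jumps up by $Y_{t-} \geq 0$, starting from $Z_0 = 0$. Everything else is a routine combination of the expectation identity with monotonicity and convexity of $f$.
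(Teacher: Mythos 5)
Both inequalities are established correctly. For \eqref{ttl2} your argument is essentially identical to the paper's: you take expectations, isolate $\int_0^t \E[Y_s f(Y_s)]\,ds$, and use the same splitting-at-$2$ observation (the paper writes it in the compact form $f(x)(1-x)\le f(2)-\tfrac12 xf(x)$, which lets them conclude directly from $\E[Y_t]\ge 0$ without any absorption step; your reorganization via $f(y)\le f(2)+\tfrac{y}{2}f(y)$ is equivalent, though the absorption of the $\tfrac12$ term implicitly assumes $\int_0^t\E[Y_sf(Y_s)]\,ds<\infty$, which is harmless after a routine localization but the paper's form avoids raising the question).

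For \eqref{ttl1} you take a genuinely more elaborate route than the paper. The paper simply observes from \eqref{eq:dynlimit} that both $-\lambda\int_0^t Y_s\,ds$ (since $Y_s\ge 0$) and the Poisson jump term are nonpositive, so a.s.\ $Y_t\le Y_0+\int_0^t\E[\lambda Y_s+f(Y_s)]\,ds$, and then combines the elementary bound $\lambda y+f(y)\le C(1+yf(y))$ with \eqref{ttl2}. You instead construct the jump-free dominator $\tilde Y_t=e^{-\lambda t}Y_0+\int_0^t e^{-\lambda(t-u)}a_u\,du$, prove the pathwise comparison $Y_t\le\tilde Y_t$ by variation of constants, and then need an extra preliminary estimate $\E[Y_s]\le\E[Y_0]+f(1)s$ to control the $\lambda\E[Y_u]$ piece before bounding the $\E[f(Y_u)]$ piece via \eqref{ttl2}. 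Your argument is correct and the pathwise-domination viewpoint (keeping the $e^{-\lambda(t-u)}$ kernel rather than discarding the $-\lambda Y_s$ drift) is somewhat more refined, but it costs you the additional lemma on $\E[Y_s]$ and more bookkeeping; the paper's one-line drop-the-negative-terms bound gives the same linear-in-$t$ conclusion with less machinery. If you later need sharper control of $Y_t$ that actually exploits the exponential decay, the dominator $\tilde Y_t$ is the right object — but for the stated $C(1+t)$ bound it is overkill.
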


\begin{proof}
Taking expectations in \eqref{eq:dynlimit}, we see that  
\begin{align*}
\E [ Y_{ t }] = \E [Y_0 ]  +  \int_0^{t } \E[f(Y_s) ( 1 - Y_s  )]   ds
\le \E [Y_0 ] +  f( 2)  t  
- \frac 12\int_0^{t }  \E[Y_sf( Y_s)] ds,
\end{align*}
because $f(x)(1-x)= -xf(x)/2 + f(x)(1-x/2) \leq -xf(x)/2+f(2)$ for $x \geq 0,$ since $f$ is nonnegative
and non-decreasing. Inequality \eqref{ttl2} then follows from the fact that $\E[Y_t]\geq 0$.
Recalling the nonlinear SDE \eqref{eq:dynlimit}, we see that
$Y_t \leq Y_0 + \intot \E[\lambda Y_s + f(Y_s)] ds$ for all $t\ge 0$ a.s.
But there exists a constant $C$, depending on $f$ and $\lambda$, such that 
$\lambda y + f(y) \leq C(1+yf(y))$ for all $y\geq 0$: 
it suffices to use that $f$ is positive and non-decreasing. Consequently,
$\E[\lambda Y_s + f(Y_s)] \leq C (1+\E[Y_sf(Y_s)])$ for all $s\geq 0$ and \eqref{ttl1}
follows from \eqref{ttl2}.
\end{proof}

We now turn to the particle system.

\begin{prop}\label{prop:2}
Grant Assumption \ref{ass:1} and suppose that $\lambda \geq 0$.
Any solution $(X^N_t)_{t\geq 0}$ to \eqref{eq:dyn} a.s. satisfies that 
for all $t\geq 0$, all $i=1,\dots,N$,
\begin{align}
&\displaystyle X^{N,i}_t \leq  X^{N,i}_0 + (4\lambda t + 4)(\bar X^N_0 +Z^N_t), \label{tt1}\\
&\displaystyle \frac{1}{N} \sum_{j = 1 }^N 
\int_0^t \int_0^\infty (1+X^{N,j}_{s-}) \indiq_{\{ z \le f ( X^{ N, j }_{s-}) \}} \bN^j (ds, dz )
\le 3 \bar X^N_0 + 4 Z_t^N, \label{tt2}
\end{align}
where $Z^N_t:= N^{-1} \sum_{j = 1 }^N \int_0^t \int_0^\infty 
\indiq_{\{ z \le f(2) \}} \bN^j ( ds, dz ).$
Furthermore, it holds that for any $T\geq 0$,
\begin{align}\label{tt3}
\P\Big(\forall \; i=1,\dots,N,\;\;
\sup_{[0,T]}  X^{N, i } _t  \le   X^{N,i}_0 + (4\lambda T + 4)(\bar X^N_0 +2f(2)T)\Big) 
\geq 1 - e^{- NTf(2)(3-e)}.
\end{align}
\end{prop}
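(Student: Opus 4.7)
The plan is to exploit the fact that the empirical mean $\bar X^N_t$ is driven only by the jumps, since the electrical-synapse drifts $-\lambda(X^{N,i}_s-\bar X^N_s)$ sum to zero. Summing \eqref{eq:dyn} over $i$ and dividing by $N$ produces the identity
$$
\bar X^N_t = \bar X^N_0 - \frac{1}{N}\sum_{j=1}^N \int_0^t\int_0^\infty X^{N,j}_{s-}\indiq_{\{z\le f(X^{N,j}_{s-})\}}\bN^j(ds,dz) + \frac{N-1}{N^2}\sum_{j=1}^N \int_0^t\int_0^\infty \indiq_{\{z\le f(X^{N,j}_{s-})\}}\bN^j(ds,dz),
$$
the coefficient $(N-1)/N^2$ reflecting that a spike of neuron $j$ deposits $1/N$ of mass on each of the $N-1$ other neurons.

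The key step is a dichotomy: in each integral, split the indicator according to whether $X^{N,j}_{s-}<2$ or $X^{N,j}_{s-}\ge 2$. On the small part, monotonicity of $f$ gives $\indiq_{\{z\le f(X^{N,j}_{s-})\}}\le\indiq_{\{z\le f(2)\}}$, whose contribution is controlled by $Z^N_t$; on the large part, the pointwise inequality $1\le X^{N,j}_{s-}/2$ lets the positive gain per spike be absorbed into half of the loss per spike. After rearrangement this yields the fundamental estimate
$$
\bar X^N_t + \frac{1}{2N}\sum_{j=1}^N \int_0^t\int_0^\infty X^{N,j}_{s-}\indiq_{\{z\le f(X^{N,j}_{s-})\}}\bN^j(ds,dz) \le \bar X^N_0 + Z^N_t,
$$
from which one immediately reads off both $\bar X^N_t\le \bar X^N_0+Z^N_t$ and $N^{-1}\sum_j\int X^{N,j}_{s-}\indiq_{\{z\le f(X^{N,j}_{s-})\}}\bN^j \le 2(\bar X^N_0+Z^N_t)$. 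Applying the same dichotomy to the pure spike-counting integral $N^{-1}\sum_j\int \indiq_{\{z\le f(X^{N,j}_{s-})\}}\bN^j$ bounds it by $\bar X^N_0+2Z^N_t$, and summing the two inequalities gives \eqref{tt2}.

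For \eqref{tt1}, I would drop the two non-positive contributions $-\lambda X^{N,i}_s$ and $-X^{N,i}_{s-}\indiq_{\{z\le f(X^{N,i}_{s-})\}}$ from \eqref{eq:dyn}. Using $\lambda\int_0^t\bar X^N_s\,ds\le \lambda t(\bar X^N_0+Z^N_t)$ (by monotonicity of $s\mapsto Z^N_s$) together with the spike-reception bound $\bar X^N_0+2Z^N_t$ from the previous step yields $X^{N,i}_t\le X^{N,i}_0+(\lambda t+2)(\bar X^N_0+Z^N_t)$, and the crude simplification $\lambda t+2\le 4\lambda t+4$ gives \eqref{tt1}. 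For \eqref{tt3}, note that on the event $\{Z^N_T\le 2f(2)T\}$ the bound \eqref{tt1} applied at each $t\le T$ (using monotonicity of $Z^N$) yields the claimed deterministic uniform-in-$i$ bound. Since $NZ^N_T\sim\mathrm{Poisson}(NTf(2))$, an exponential Markov inequality at $\theta=1$ gives $\P(NZ^N_T>2NTf(2))\le e^{-2NTf(2)}\E[e^{NZ^N_T}]=\exp(NTf(2)(e-3))$.

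The main difficulty is the dichotomy at the threshold $2$: this value is chosen precisely so that $1\le X/2$ on $\{X\ge 2\}$, which is exactly what allows a spike's positive contribution to be absorbed into half of the corresponding mass loss. Everything else amounts to routine bookkeeping.
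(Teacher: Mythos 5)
Your proof is correct and follows essentially the same route as the paper: start from the empirical-mean identity, exploit nonnegativity of $\bar X^N_t$ together with $(N-1)/N\le 1$, and control the jump integrals by a threshold argument at $x=2$. The paper packages your explicit dichotomy into the single inequality $x-1\ge (x+1)/3-(4/3)\indiq_{\{x\le 2\}}$, which yields \eqref{tt2} in one step, whereas you derive the two pieces (the $X$-weighted and the pure-count integral) separately and also obtain the slightly sharper intermediate bound $\bar X^N_t\le \bar X^N_0+Z^N_t$; the final estimates \eqref{tt1}--\eqref{tt3} are identical.
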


\begin{proof} We start with the following observation: taking the (empirical) mean of
\eqref{eq:dyn}, we find
\begin{equation}\label{eq:16}
 \bar X_t^N = \bar X_0^N + \frac{1}{N} \sum_{ i = 1 }^N \int_0^t \int_0^\infty 
\Big( \frac{N-1}{N} - X^{N, i }_{s- } \Big) \indiq_{\{ z \le f ( X^{ N, i }_{s-}) \}} 
\bN^i (ds, dz ) 
\end{equation}
which implies, since $\bar X_t^N\geq 0$ and $(N-1)/N\leq 1$, that 
$$
\frac{1}{N} \sum_{ i = 1 }^N \int_0^t \int_0^\infty ( X^{N, i }_{s- } - 1  ) 
\indiq_{\{ z \le f ( X^{ N, i }_{s-}) \}} \bN^i (ds, dz ) \le \bar X_0^N .
$$
Using that $x-1\geq (x+1)/3 - (4/3)\indiq_{ \{x\le 2\} }$ for all $x \geq 0$ and that $f$ is non-decreasing,
we deduce that 
\begin{align*}
&\frac{1}{N} \sum_{ i = 1 }^N \int_0^t \int_0^\infty  
(1+X^{N,i}_{s-}) \indiq_{\{ z \le f ( X^{ N, i }_{s-}) \}} \bN^i (ds, dz )  \\
\le&  3 \bar X_0^N 
+  \frac{4}{N} \sum_{ i = 1 }^N \int_0^t \int_0^\infty  \indiq_{ \{ X^{N, i }_{s- }  \le  2 \} }   
\indiq_{\{ z \le f ( X^{ N, i }_{s-}) \}} \bN^i (ds, dz ).
\end{align*}
Since $f$ is non-decreasing, \eqref{tt2} follows.
Recalling \eqref{eq:16} and using \eqref{tt2}, we realize that
$$ 
\bar X_t^N \le \bar X^N_0 + \frac{1}{N} \sum_{ i = 1 }^N \int_0^t \int_0^\infty 
\indiq_{\{ z \le f ( X^{ N, i }_{s-}) \}} 
\bN^i (ds, dz )  \leq  4 \bar X_0^N  + 4 Z_t^N .
$$ 
Now, for all $ 1 \le i \le N$, starting from \eqref{eq:dyn},
\begin{align*}
 X_t^{N, i } \le &X_0^{N, i } + \lambda \int_0^t \bar X_s^N ds + \frac1N \sum_{ j = 1 }^N 
\int_0^t \int_0^\infty \indiq_{ \{ z \le f ( X_{s-}^{N, j } ) \}} \bN^j ( ds, dz ) \\
\le &X_0^{N, i } +  \lambda \int_0^t ( 4\bar X_0^N  + 4 Z_s^N ) ds 
+ 3 \bar X^N_0 + 4 Z_t^N.
\end{align*}
Hence \eqref{tt1} follows from the fact that $Z^N_t$ is a.s. a non-decreasing function of time. 
Finally, the deviation estimate \eqref{tt3} simply relies on \eqref{tt1} and the inequality
\begin{align}\label{poiss}
\P\big(Z^N_T \geq 2f(2)T\big)\leq  e^{-2f(2)NT}\E\big[e^{NZ^N_T}\big]=e^{-NTf(2)(3-e)},
\end{align}
which uses that $Z^N_T$ is the empirical mean of $N$ i.i.d. Poisson$(f(2)T)$-random variables.
\end{proof}

The above estimate is largely sufficient to give the

\begin{proof}[Proof of Proposition \ref{theo:1}] 
Suppose first that $f$ is bounded. Then using only that $f$ is measurable (and nonnegative),
we can apply Theorem 9.1 in Chapter IV of Ikeda and Watanabe \cite{IW}: there is a path-wise unique solution 
$(X^{N}_t)_{t\geq 0}$ to \eqref{eq:dyn} defined on $[0, \infty)$. 

For a general $f$ satisfying Assumption \ref{ass:1}
and a fixed truncation level $K > 0$, we consider the unique solution $(X^{N, K}_t)_{t\geq 0}$ to 
\eqref{eq:dyn} with $f$ replaced by $f\land K$ and 
we introduce $\tau_K = \inf\{ t\geq 0 \, : \, |X^{N, K}_t | \geq K \}$. By path-wise uniqueness, 
it holds that
$X_t^{N, K} = X_t^{N, K+1}$ for all $K \in \N$ and all $ t \in [0,\tau_K]$
and that $\tau_K < \tau_{K+1}$  for all $K \in \N$,
almost surely. Setting $\tau= \sup_K \tau_K$, it is not hard to conclude that there is a path-wise unique
solution $(X^{N}_t)_{t\in [0,\tau)}$ to \eqref{eq:dyn} defined on $[0,\tau)$
and that $\limsup_{t \to \tau} |X^N_t|=\infty$ on the event $\{\tau <\infty\}$.

Recall now \eqref{tt1}: a.s., 
$X^{N,i}_t \leq  X^{N,i}_0 + C(1+t)(\bar X^N_0 +Z^N_t)$ for all $i=1,\dots,N$, all $t\geq 0$.
Observe also that obviously, $\sup_{[0,T]} Z^N_t <\infty$ a.s. for all $T>0$. Hence 
$\tau=\infty$ a.s., which completes the proof.
\end{proof}

\section{Path-wise uniqueness for the nonlinear SDE}\label{sec:3}

Let us first consider the case with compact support.

\begin{prop}\label{unicomp}
Suppose Assumption \ref{ass:1} and that $\lambda\geq 0$.
Path-wise uniqueness holds true for the nonlinear SDE \eqref{eq:dynlimit},
in the class of processes $(Y_t)_{t\geq 0}$ such that there is a deterministic locally 
bounded function $A:\R_+\mapsto\R_+$ such that a.s., $\sup_{t\geq 0} (Y_t/A(t)) <\infty$.
\end{prop}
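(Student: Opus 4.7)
Let $Y$ and $\tilde Y$ be two solutions of \eqref{eq:dynlimit} in the stated class, both starting from $Y_0$ and driven by the same Poisson measure $\bN$. My first step would be to replace the random envelope by a deterministic one on any fixed interval $[0,T]$. Observe that between jumps the drift of $Y$ is at most $\lambda\E[Y_s]+\E[f(Y_s)]$, while jumps only decrease $Y$, so $Y_s\leq Y_0+\int_0^s(\lambda\E[Y_u]+\E[f(Y_u)])\,du$ pointwise. Since $Y$ is a solution to \eqref{eq:dynlimit}, the mean-field quantities $\E[Y_s]$ and $\E[f(Y_s)]$ are locally integrable, so $B(t):=\int_0^t(\lambda\E[Y_u]+\E[f(Y_u)])\,du$ is locally bounded. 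In the intended setting of Theorem \ref{theo:2}(ii), where $Y_0\leq b$ is compactly supported, this yields the deterministic envelope $Y_s,\tilde Y_s\leq b+B(s)=:M(s)$, and similarly for $\tilde Y$.

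Then, setting $u(t):=\E[|Y_t-\tilde Y_t|]$, I would subtract the two equations, take absolute values and expectation, and use the compensator $ds\,dz$ of $\bN$ for the jump term. A direct case analysis according to the sign of $Y_s-\tilde Y_s$ gives the key identity
\begin{align*}
\int_0^\infty|Y_s\indiq_{\{z\leq f(Y_s)\}}-\tilde Y_s\indiq_{\{z\leq f(\tilde Y_s)\}}|\,dz = (f(Y_s)\wedge f(\tilde Y_s))|Y_s-\tilde Y_s| + (Y_s\vee\tilde Y_s)|f(Y_s)-f(\tilde Y_s)|.
\end{align*}
Combined with $|\E[Y_s]-\E[\tilde Y_s]|\leq u(s)$, $|\E[f(Y_s)]-\E[f(\tilde Y_s)]|\leq \E|f(Y_s)-f(\tilde Y_s)|$, the deterministic bound $M_T:=\sup_{[0,T]}M$, and the Lipschitz constant $L_T:=\sup_{[0,M_T]}f'<\infty$ (available since $f\in C^1$), each term on the right is bounded by a constant $C_T$ times $\intot u(s)\,ds$. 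Gronwall's lemma yields $u\equiv 0$ on $[0,T]$, and since $T$ was arbitrary, $Y=\tilde Y$ a.s.

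The main obstacle is precisely the preliminary passage from the a.s.\ random envelope $\sup_t Y_t/A(t)<\infty$ to a deterministic one: once this is in hand, $f$ behaves as a bounded Lipschitz function on the relevant interval, and the crucial structural point is that the Poisson compensator contributes only terms linear in $|Y_s-\tilde Y_s|$ rather than quantities involving $f$ multiplicatively in an uncontrolled way. For the fully general random-envelope class, one would localize via $\tau_K:=\inf\{t:(Y_t\vee\tilde Y_t)/A(t)>K\}$, apply the same Gronwall to $|Y_{t\wedge\tau_K}-\tilde Y_{t\wedge\tau_K}|$, and send $K\to\infty$ controlling the mean-field tail contributions $\E[(Y_s+f(Y_s))\indiq_{\tau_K<s}]$ by dominated convergence; balancing the $K$-dependence of the Gronwall constant against the vanishing tail is delicate, but the compact-support case elegantly bypasses this issue via the deterministic envelope above.
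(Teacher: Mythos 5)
Your proof is correct and is essentially the paper's: subtract the two equations, compute the Poisson compensator exactly (your displayed identity is just a sharper form of the paper's ``very rough computation'' \eqref{ase}), observe that a deterministic envelope makes $f$ bounded and Lipschitz on the relevant compact interval, and close with Gronwall. Your extra caution is well placed: the class condition $\sup_t (Y_t/A(t))<\infty$ a.s.\ literally only gives a \emph{random} bound $Y_t\le C(\omega)A(t)$, whereas the paper's proof silently reads it as the deterministic bound $\max\{X_t,Y_t\}\le A(t)$; this is harmless because the result is invoked only in the compact-support case of Theorem~\ref{theo:2}-(ii), where \eqref{ttl1} supplies a genuinely deterministic envelope $Y_0+C(1+t)$, which is precisely the preliminary step you carry out.
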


Note that the above condition is {\it a priori} satisfied for $g_0$
compactly supported thanks to \eqref{ttl1}.

\begin{proof}
Consider two solutions 
$(Y_t)_{t\geq 0}$ and $(X_t)_{t\geq 0}$ to \eqref{eq:dynlimit}, driven by
the same Poisson measure $\bN$ and with $Y_0=X_0$. 
A very rough computation shows that 
there is a constant $C$, depending only on $\lambda$, such that
\begin{align}\label{ase}
\E[|X_t-Y_t|]\leq &
C \intot \E\Big[|X_s-Y_s|(1+f(X_s)+f(Y_s))+|f(X_s)-f(Y_s)|(1+X_s+Y_s)\Big] ds \\
&+ C \intot \Big(|\E[X_s]-\E[Y_s]| + |\E[f(X_s)]- \E[f(Y_s)]| \Big) ds. \nonumber
\end{align}
But we know by assumption that a.s., $\max\{Y_t,X_t\} \leq A(t)$ 
for some deterministic locally bounded function $A$.
Since $f$ is $C^1$ on $[0,\infty)$, it is Lipschitz continuous and bounded on compacts.
We thus easily check that for all $T$, there is a constant $C_T$ such that for all
$t\in [0,T]$,
\begin{align*}
\E[|X_t-Y_t|]\leq & C_T \intot \E[|X_s-Y_s|]ds.
\end{align*}
Finally, we know by assumption 
that the function $t\mapsto \E [|X_t -Y_t|]$ is locally bounded.
We thus may apply the Gronwall Lemma and deduce that $\E [|X_t - Y_t|]=0$ for all $t\geq 0$
as desired.
\end{proof}

\begin{prop}\label{unipascomp}
Let $\lambda\geq 0$ and grant Assumptions \ref{ass:1} and \ref{ass:2}.
Path-wise uniqueness holds true for the nonlinear SDE \eqref{eq:dynlimit}
in the class of processes $(Y_t)_{t\geq 0}$ such that $\sup_{[0,T]}\E[f(Y_t)]<\infty$
for all $T\geq 0$.

\vip

More generally, for any pair of solutions $(X_t)_{t\geq 0}$ and $(Y_t)_{t\geq 0}$ to \eqref{eq:dynlimit},
satisfying $\sup_{[0,T]}(\E[f(X_t)]+\E[f(Y_t)])<\infty$ for all $T\geq 0$,
driven by the same Poisson measure but with possibly different initial conditions, it holds that 
for all $T\geq 0$,
\begin{equation}\label{stab}
\sup_{[0,T]} \E[|H(X_t)-H(Y_t)|] \leq C_T \E[|H(X_0)-H(Y_0)|],
\end{equation}
where $H(x)=f(x)+\arctan x$.
\end{prop}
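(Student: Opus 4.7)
Since $H$ is strictly increasing on $\R_+$, the path-wise uniqueness claim follows immediately from the stability estimate \eqref{stab} applied with $X_0=Y_0$, so I focus on \eqref{stab}. My plan is to couple $(X_t)_{t\geq 0}$ and $(Y_t)_{t\geq 0}$ by driving them with the \emph{same} Poisson measure $\bN$ and to derive a Gronwall inequality for $u(t):=\E[|H(X_t)-H(Y_t)|]$. Because both $f$ and $\arctan$ are non-decreasing, $\mathrm{sign}(H(X)-H(Y))=\mathrm{sign}(X-Y)$ and $|f(X)-f(Y)|\leq |H(X)-H(Y)|$; moreover $H\in C^2$ under Assumption \ref{ass:2}, so a Tanaka-type formula splits $u(t)-u(0)$ into a continuous drift contribution and a jump contribution coming from~$\bN$.

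The heart of the argument is the jump contribution. Fix $s$ and assume without loss of generality $X_{s-}\leq Y_{s-}$: at an atom $(s,z)$ of $\bN$, both neurons reset to~$0$ when $z\leq f(X_{s-})$, while only $Y$ resets when $f(X_{s-})<z\leq f(Y_{s-})$. Integrating the resulting jump of $|f(X)-f(Y)|$ in $z$ over $\R_+$ and simplifying the elementary algebraic expression yields \emph{exactly} $-|f(X_{s-})-f(Y_{s-})|^2\,ds$; the analogous case analysis for $|\arctan X-\arctan Y|$ produces a term bounded in absolute value by $(\pi/2)|f(X_{s-})-f(Y_{s-})|$. Hence the jump contribution to $du(t)$ is at most $-\E|f(X_t)-f(Y_t)|^2\,dt + C\E|f(X_t)-f(Y_t)|\,dt\leq Cu(t)\,dt$.

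The drift contribution, obtained by applying It\^o to the continuous parts $b_Z(s):=-\lambda(Z_s-\E Z_s)+\E f(Z_s)$ for $Z\in\{X,Y\}$, equals
\begin{equation*}
\E\Bigl[\mathrm{sign}(X_s-Y_s)\bigl\{H'(X_s)b_X(s) - H'(Y_s)b_Y(s)\bigr\}\Bigr].
\end{equation*}
Using Assumption \ref{ass:2} in the form $H'(x)=f'(x)+(1+x^2)^{-1}\leq C(1+f(x))$, the moment hypothesis $\sup_{[0,T]}(\E f(X_s)+\E f(Y_s))<\infty$, and the trivial bound $|\E f(X_s)-\E f(Y_s)|\leq u(s)$, this expression is controlled by $C_T u(s)$ up to cross terms involving $|\E X_s-\E Y_s|$ and $|H'(X_s)-H'(Y_s)|$. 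The factor $|\E X_s-\E Y_s|$ is handled by splitting $|X-Y|$ according to whether $X\vee Y$ exceeds a threshold $M$: on $\{X,Y\leq M\}$ one has $|X-Y|\leq (1+M^2)|\arctan X-\arctan Y|\leq (1+M^2)|H(X)-H(Y)|$; on the complement, Markov plus the moment bound give a small residue, and any residual super-linear term is absorbed by the negative quadratic feedback $-\E|f(X_s)-f(Y_s)|^2$ coming from the jump step, via Young's inequality. Assembling gives $u(t)\leq u(0)+C_T\int_0^t u(s)\,ds$, and Gronwall yields \eqref{stab}.

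The main obstacle is precisely this drift step: since $H'(x)$ can grow as fast as $f(x)$ under Assumption \ref{ass:2}, a careless bound would produce expectations of $f(X)^2$ or $Xf(X)$ that are not uniformly controlled in time under the sole hypothesis $\sup_{[0,T]}\E f(Y_s)<\infty$. The rationale for the choice $H=f+\arctan$ is precisely to reconcile two incompatible regimes: the $\arctan$-component dominates $|X-Y|$ near zero (where $f$ vanishes too fast to separate solutions, e.g.\ like $x^\xi$ with $\xi\geq 2$), while the $f$-component provides, through the common Poisson measure, the stabilizing quadratic feedback needed to absorb any super-linear drift contribution.
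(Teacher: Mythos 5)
Your overall strategy is the same as the paper's: couple the two solutions via the same Poisson measure, apply a Tanaka/It\^o formula to $|H(X_t)-H(Y_t)|$, observe that the jump contribution splits into the exactly cancelling piece $-|f(X)-f(Y)|^2$ plus an $\arctan$-residue bounded by $(\pi/2)|f(X)-f(Y)|\leq (\pi/2)|H(X)-H(Y)|$ (your explicit case-by-case integration over $z$ is correct and reproduces the paper's Lemma~\ref{lem:1}-(iv)), and control the drift via $H'(x)\leq C(1+f(x))$ together with the moment hypothesis. Up to that point the two arguments coincide.

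There is, however, a genuine gap in the way you propose to handle the cross term $\lambda\,\E[H'(Y_s)]\,|\E X_s-\E Y_s|$ in the drift. Your threshold device --- $|X-Y|\leq(1+M^2)|\arctan X-\arctan Y|$ on $\{X,Y\leq M\}$, Markov on the complement, and absorption of the remainder via Young into $-\E|f(X)-f(Y)|^2$ --- does not close the Gronwall estimate. The Markov/Young step produces a residual that is a genuine additive \emph{constant} $C_M$ (not a multiple of $u(s)=\E|H(X_s)-H(Y_s)|$), so Gronwall gives only $u(t)\leq(u(0)+C_Mt)e^{C_Mt}$. Letting $M\to\infty$ kills $C_Mt$ but blows up $e^{C_Mt}$, and for fixed $M$ the bound does not yield $u\equiv 0$ when $u(0)=0$, so pathwise uniqueness is not obtained. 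Moreover, you name the second cross term $|H'(X_s)-H'(Y_s)|$ as problematic but give no treatment of it at all. Both problems are resolved at once by the observation the paper makes in Lemma~\ref{lem:1}-(ii): under Assumptions~\ref{ass:1} and~\ref{ass:2}, $H'(x)\geq c>0$ uniformly on $\R_+$ (on $[0,1]$ because $\arctan'\geq 1/2$, on $[1,\infty)$ because convexity of $f$ with $f(0)=0$ gives $f'(x)\geq f'(1)\geq f(1)>0$), and $|H''|\leq CH'$ (from $f''\leq Cf'$ in Assumption~\ref{ass:2}). These yield the clean pointwise estimates $|x-y|\leq C|H(x)-H(y)|$ and $|H'(x)-H'(y)|\leq C|H(x)-H(y)|$, which bound both cross terms by $C\,u(s)\cdot\sup_{[0,T]}\E[1+f(X)+f(Y)]$ and close the Gronwall argument without any threshold, Markov, or Young step. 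You should also make explicit, for the $-\lambda(X_sH'(X_s)-Y_sH'(Y_s))$ piece, that $x\mapsto xf'(x)$ is non-decreasing so its contribution has the right sign (Lemma~\ref{lem:1}-(iii)); your write-up folds this silently into the generic ``drift contribution''.
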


Here again, \eqref{ttl1} (and Remark \ref{rk1}-(ii)) shows that the condition is {\it a priori} 
satisfied if $\E[f(Y_0)]<\infty$.
As already mentioned, a proof based on $\E[|X_t-Y_t|]$ does not seem to work:
one finds an inequality like \eqref{ase} (even with a finer computation using the It\^ o formula),
from which it seems difficult to conclude.

\vip

The rest of the section is devoted to the proof of Proposition \ref{unipascomp}.

\begin{lem}\label{lem:1}
Grant Assumptions \ref{ass:1} and \ref{ass:2} and let $H(x) = f(x) + \arctan (x)$. 
There is a constant $C$ such that for all $x,y \in \R_+$, we have 

\vip

(0) $|H''(x)|\leq C H'(x)$, 

\vip

(i) $x+ H' (x) \le C ( 1 + f(x) )$,

\vip

(ii) $|x-y| + |H'(x) - H' (y) | + |f(x) - f(y) | \le C | H(x) - H(y) |$,

\vip

(iii) $- \sg(x-y) ( x H'(x) - y H' (y) ) \le C | H(x) - H(y) |$,

\vip

(iv) $- (f (x) \land f(y)) | H (x) - H(y) | + | f(x) - f(y) | 
( H (x) \wedge H (y)  - | H(x) - H(y ) | ) \le  C |H (x) - H(y) |$.
\end{lem}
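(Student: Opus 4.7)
The plan is to establish the five inequalities in order, with (0) serving as a reusable tool for both (ii) and (iii). A first observation I would use throughout is that $H'$ is uniformly bounded below on $\R_+$ by some $c_0>0$: on $[0,1]$ this uses $(1+x^2)^{-1}\geq 1/2$, and on $[1,\infty)$ it uses $f'(x)\geq f'(1)>0$, the strict positivity of $f'(1)$ being forced by the convexity of $f$ together with $f(0)=0$ and $f>0$ on $(0,\infty)$ (otherwise $f'\equiv 0$ on $[0,1]$ by monotonicity of $f'$, contradicting $f>0$).

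For (0), I would split into $x\in [0,1]$, where $H''$ is continuous and $H'$ is bounded below by $c_0$, and $x\in[1,\infty)$, where Assumption \ref{ass:2} gives $f''(x)\leq M f'(x)\leq M H'(x)$ while the term $2x/(1+x^2)^2$ is uniformly bounded, hence controlled by $(2/c_0)H'(x)$. For (i), since $H'(x)\leq f'(x)+1$, the claim reduces to bounding $x+f'(x)$ by a multiple of $1+f(x)$; this follows on $[1,\infty)$ from Remark \ref{rk1}(i) and $f'(x)\leq Mf(x)$ (Assumption \ref{ass:2}), and is trivial on $[0,1]$. For (ii), I would assume $y\leq x$ by symmetry; then $|f(x)-f(y)|\leq H(x)-H(y)$ is immediate from monotonicity of $\arctan$, $|x-y|\leq c_0^{-1}(H(x)-H(y))$ follows from the uniform lower bound on $H'$, and $|H'(x)-H'(y)|\leq\int_y^x|H''(z)|\,dz\leq C(H(x)-H(y))$ uses (0).

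The delicate item is (iii), and it is the main obstacle: $xH'(x)$ can grow much faster than $H(x)$ (think of $f(x)=x^\xi$ with $\xi$ large), so no two-sided bound is possible. The saving point is that only an upper bound on the signed quantity is required, and this is matched by a pointwise lower bound on $(zH'(z))'$. I would use
\[
(zH'(z))' \;=\; H'(z) + zf''(z) - \frac{2z^2}{(1+z^2)^2}\;\geq\;-\tfrac12,
\]
which follows from $H'\geq 0$, convexity of $f$, and $\sup_{z\geq 0}2z^2/(1+z^2)^2=1/2$. Integrating, for $y\leq x$,
\[
yH'(y)-xH'(x)\;=\;-\int_y^x (zH'(z))'\,dz \;\leq\;\tfrac{x-y}{2}\;\leq\;\tfrac{1}{2c_0}\bigl(H(x)-H(y)\bigr),
\]
and the case $x\leq y$ is symmetric.

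For (iv), I would again use symmetry to assume $y\leq x$, so the minima select the $y$-values and the absolute values drop. Setting $a=H(x)-H(y)\geq 0$ and $b=f(x)-f(y)\geq 0$, the left-hand side rearranges algebraically to $bH(y)-af(x)$. Using $b\leq a$ from (ii) together with $H(y)=f(y)+\arctan y\leq f(x)+\pi/2$, one bounds this by $a(f(x)+\pi/2)-af(x)=(\pi/2)a$, giving the claim with constant $C=\pi/2$.
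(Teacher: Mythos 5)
Your proof is correct and essentially matches the paper's argument. For (0), (i), (ii) and (iv) the steps coincide almost line by line: your lower bound $H'\geq c_0$ is the paper's $1\leq CH'$, your reduction of (ii) to an inequality on $H''$, $f'$ and $1$ controlled by $H'$ is the same, and in (iv) the paper's symmetric rewrite $-(f(x)\vee f(y))|H(x)-H(y)|+(H(x)\wedge H(y))|f(x)-f(y)|$ is the WLOG-free form of your $bH(y)-af(x)$, with the same final constant $\pi/2$. The only mild deviation is (iii): the paper decomposes $xH'(x)=xf'(x)+x/(1+x^2)$, drops the signed difference coming from $z\mapsto zf'(z)$ being non-decreasing (by convexity of $f$), and bounds the $\arctan$ contribution by $C|x-y|\leq C|H(x)-H(y)|$ via (ii); you instead establish the pointwise one-sided Lipschitz bound $(zH'(z))'\geq -1/2$ and integrate. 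These rely on identical ingredients (convexity of $f$ plus boundedness of the derivative of $z\mapsto z/(1+z^2)$), so the difference is purely one of packaging; your version is marginally more economical in that it delivers the explicit constant $1/(2c_0)$ in a single line.
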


\begin{proof}
First, $|H''(x)| \leq |\arctan''(x)|+f''(x)\leq C+f''(x)$.
If $x\leq 1$, we deduce that $|H''(x)|\leq C \leq C H'(x)$, while if $x\geq 1$, we recall
that $f''(x)\leq Cf'(x)$, whence $|H''(x)|\leq C(1+f'(x)) \leq C f'(x)\leq C H'(x)$ as desired.

\vip

We next check (i). We have 
$ x + H' (x) \le x + f'(x) + 1$. If $x\leq 1$, we just write 
$ x + H' (x) \le C \leq C(1+f(x))$. If now $x\geq 1$, 
since $f'(x)\leq Cf(x)$ by Assumption \ref{ass:2}, 
we find that $ x + H' (x) \le 2x + Cf(x)\leq C f(x)$ by Remark \ref{rk1}-(i).

\vip

In order to prove (ii), it is sufficient to check that 
$ 1 + | H''(x)| + f'(x) \le C H'(x)$ for all $x\geq 0$.
First, $1\leq CH'(x)$ for all $x\geq 0$, because $H'(x)\geq f'(1)>0$
if $x\geq 1$, while $H'(x) \geq \arctan'(x)\geq 1/2$ if $x\leq 1$. Next, $f'(x)\leq
H'(x)$ is obvious. Finally, $|H''(x)| \leq  C H'(x)$ has already been seen.

\vip

Concerning point (iii), 
\begin{align*}
  - \sg( x-y) ( x H'(x) - y H' (y) )  
=  - \sg ( x-y) ( x f'(x) - y f' (y) ) 
  - \sg ( x-y) \Big( \frac{x}{1 +x^2 } - \frac{y}{1 +y^2 } \Big) .
\end{align*}
The first term on the RHS is negative, because $ x f' (x) $ is non-decreasing. 
The second one can be roughly bounded by $C |x-y|$
which in turn is bounded by $C | H(x) - H(y) |$ due to point (ii).

\vip

Finally, we rewrite the LHS of point (iv) as
$$
- (f(x) \vee f(y) ) | H (x) - H(y) | + (H(x) \wedge H(y)) | f(x) - f(y) |.
$$
Using that $ | f(x) - f(y) |\leq | H(x) - H(y) |$ because $H(x)=f(x)+\arctan(x)$ with both $f$ and $\arctan$ 
non-decreasing, that $f(x) \vee f(y)\geq f(x)$ and $H(x) \wedge H(y) \leq H(x)$, we get an upper-bound in 
$$
- f(x) | H (x) - H(y) |  + H(x) | H(x) - H(y) | = (\arctan x) | H(x) - H(y) | \leq \pi| H(x) - H(y) |/2.
$$
This completes the proof.
\end{proof}

\begin{proof}[Proof of Proposition \ref{unipascomp}]
Let thus $(X_t)_{t\geq 0}$ and $(Y_t)_{t\geq 0}$ be two solutions of 
\eqref{eq:dynlimit} driven by the same Poisson measure and satisfying $\sup_{[0,T]}\E[f(X_t)+f(Y_t)]<\infty$
for all $T>0$.
We apply the It\^o formula for jump processes and take expectations
to compute $\E [| H ( X_t) - H(Y_t) |]$. Actually, one has to first consider a regularized
version of the absolute value and then to pass to the limit, but this causes no 
difficulty. See the proof of Theorem \ref{theo:4} where such a regularization procedure is handled.
We find, using that $H$ is increasing (whence $\sg(H(x)-H(y))=\sg(x-y)$)
and that $H(0)=0$,
\begin{align*}
&\E[| H ( X_t) - H(Y_t) |]=\E[| H ( X_0) - H(Y_0) |] -\lambda \intot \E\Big[\sg(X_s-Y_s) (H'(X_s)X_s-H'(Y_s)Y_s)  
\Big]ds\\
&+\lambda \intot \E\Big[\sg(X_s-Y_s)(H'(X_s)\E[X_s]-H'(Y_s)\E[Y_s] )  
\Big]ds\\
&+\!\intot\! \E \Big[\! -\!  (f(X_s) \wedge f(Y_s)) | H(X_s)- H(Y_s) |
\!+\!   |f(X_s) - f(Y_s) | ( H(X_s) \wedge H(Y_s) - |H(X_s) - H(Y_s) | ) \Big]ds \\
&+ \intot \E\Big[\sg(X_s-Y_s)(H'(X_s)\E[f(X_s)]-H'(Y_s)\E[f(Y_s)] )\Big]ds\\
&=:\E[| H ( X_0) - H(Y_0) |]+I_t+J_t+K_t+L_t.
\end{align*}
First, it follows from Lemma \ref{lem:1}-(iii)-(iv) that
$I_t+K_t \leq C \intot \E [| H(X_s) - H(Y_s) |]ds $. Next we write
\begin{align*}
J_t+L_t \leq & \intot \E[| H'(X_s)- H'(Y_s) |](\lambda \E[X_s]+\E[f(X_s)]) ds \\
&+ \intot \E[H'(Y_s)](\lambda |\E[X_s-Y_s]|+ |\E[f(X_s)-f(Y_s)|]) ds .
\end{align*}
Using Lemma \ref{lem:1}-(i)-(ii), we thus find
$$
J_t+L_t \leq  C \intot \E[| H(X_s)- H(Y_s) |] \E[1+f(X_s)+f(Y_s)] ds.
$$
Since $\sup_{[0,T]}\E[f(X_t)+f(Y_t)]<\infty$ by assumption, we conclude that
for all $T\geq 0$, there is a constant $C_T$ such that for all $t\in[0,T]$,
$$
\E [| H ( X_t) - H(Y_t) |]\leq \E[| H ( X_0) - H(Y_0) |]+C_T \intot \E [| H(X_s) - H(Y_s) |]ds.
$$
We know by assumption 
that $t\mapsto \E [| H ( X_t) - H(Y_t) |]$ is locally bounded.
Hence \eqref{stab} follows from the Gronwall Lemma. Path-wise uniqueness is immediately deduced
by injectivity of $H$.
\end{proof}

\section{Propagation of chaos without rate}\label{sec:4}

In this section, we prove Theorem \ref{theo:6} and conclude the proof
of Theorem \ref{theo:2}. We start with tightness.

\begin{proof}[Proof of Theorem \ref{theo:6}-(i)-(ii)]
First, it is well-known that point (ii) follows from point (i) and the exchangeability 
of the system, see Sznitman \cite[Proposition 2.2-(ii)]{s}. We thus
only prove (i). We consider a probability distribution $g_0$
on $\R_+$ such that $\int_0^\infty x g_0(dx)<\infty$ and, for each $N\geq 1$,
the unique solution $(X^N_t)_{t\geq 0}$ to \eqref{eq:dyn} starting from
some i.i.d. $g_0$-distributed initial conditions $X^{N,i}_0$.
To show that the family $((X^{N,1}_t)_{t\geq 0})_{N\geq 1}$ is tight $\D(\R_+)$, 
we use the criterion of Aldous, see Jacod and Shiryaev \cite[Theorem 4.5 page 356]{js}. It is sufficient to prove that

\vip

(a) for all $ T> 0$, all $\e>0$,
$ \lim_{ \delta \downarrow 0} \limsup_{N \to \infty } \sup_{ (S,S') \in A_{\delta,T}} 
\P ( |X_{S'}^{N, 1 } - X_S^{N , 1 } | > \varepsilon ) = 0$,
where $A_{\delta,T}$ is the set of all pairs of stopping times $(S,S')$ such that
$0\leq S \leq S'\leq S+\delta\leq T$ a.s.,

\vip

(b) for all $ T> 0$, $\lim_{ K \uparrow \infty } \sup_N 
\P ( \sup_{ t \in [0, T ] } X_t^{N, 1 } \geq K ) = 0$.

\vip

Point (b) follows from \eqref{tt1}: we know that $\sup_{[0,T]} X^{N,1}_t
\leq X^{N,1}_0+ (4\lambda T + 4)(\bar X^N_0+Z^N_T)$, with $Z^N_T$ the mean of $N$ i.i.d. 
Poisson$(Tf(2))$-distributed random variables. Hence, setting $m=\int_0^\infty xg_0(dx)$,
$$
\sup_N \E\Big[\sup_{[0,T]} X^{N,1}_t \Big] \leq m+ (4\lambda T + 4)(m+f(2)T)<\infty.
$$

To check (a), we will use several times that for $0\leq t \leq T$, all $N\geq 1$, all $j=1,\dots,N$,
\begin{align} \label{ethop}
\intot \E[X^{N,j}_s f(X^{N,j}_s)] ds \leq 3m+4f(2)t \leq C_T.
\end{align}
Indeed, take expectations in \eqref{tt2}, use
exchangeability and recall that $\E[\bar X^N_0]=m$ and $\E[Z^N_t]=tf(2)$.

\vip

We next consider $(S,S')\in A_{\delta,T}$ and write
\begin{align*}
| X_{S'}^{N, 1 } - X_S^{N , 1 }| \le&   
\int_S^{S'} \int_0^\infty X^{ N, 1 }_{u- } \indiq_{\{ z \le f ( X_{u- }^{N, 1} ) \}} \bN^1 (du, dz )
+ \frac1N \sum_{j=2}^N \int_S^{S'} \int_0^\infty \indiq_{\{ z \le f ( X_{u- }^{N, j} ) \}} 
\bN^j (du, dz )  \\
&+ \lambda \int_S^{S'} X_u^{N, 1} du + \lambda \int_S^{S'} \bar X^N_u   du\\
 =:& I_{S, S'} + J_{S, S'} + K_{S, S'} +L_{S,S'}.
\end{align*}
We first note that $I_{S,S'}>0$ implies that 
$\tilde I_{S,S'}:=
\int_S^{S'} \int_0^\infty \indiq_{\{ z \le f ( X_{u- }^{N, 1} ) \}} \bN^i (du, dz)\geq 1$, whence
\begin{align*}
\P ( I_{S, S'} > 0 )\leq \P (\tilde I_{S,S'}\geq 1)\leq \E[I_{S,S'}]\le 
\E\Big[ \int_S^{S+\delta} f( X_u^{N, 1 } ) du\Big].
\end{align*}
But for all $A>0$, $f(x) \leq f(A)+ A^{-1} x f(x)$ because $f$ is non-decreasing.
Hence, by \eqref{ethop}, 
\begin{align*}
\P ( I_{S, S'} > 0 )\le \delta f(A) 
+ \frac1A \int_0^T \E [ X_u^{N, 1 } f( X_u^{N, 1 } )] du \le  \delta f(A) + \frac{C_T}{A}.
\end{align*}
Choosing $A=f^{-1}(\delta^{-1/2})$ (recall that $\lim_{\infty}f=\infty$ and consider
a generalized notion of inverse function if necessary), we end with
\begin{align*}
\P ( I_{S, S'} > 0 )\le \delta^{1/2} + \frac{C_T}{f^{- 1 } (\delta^{-1/2} ) }.
\end{align*}
We proceed similarly to check that
\begin{align*}
\E[J_{S,S'}]\leq & \frac1N \sum_{j=2}^N\E\Big[ \int_S^{S+\delta} f( X_u^{N, j} ) du\Big]
\le \delta^{1/2} + \frac{C_T}{f^{- 1 } ( \delta^{-1/2} ) }.
\end{align*}
Next, we write, for any $A>0$, using that $x \leq A + xf(x)/f(A)$ and then \eqref{ethop},
\begin{align*}
\E [ K_{S, S'} ] \le \lambda \E \Big[\int_S^{S+\delta } X_u^{N, 1} du \Big]
\leq \lambda\delta A + \frac{\lambda}{f(A)}\int_0^T \E[X_u^{N, 1 } f( X_u^{N, 1 } )] du
\leq \lambda \delta A + \frac{\lambda C_T}{f(A)}.
\end{align*}
We choose $A=\delta^{-1/2}$ and get 
\begin{align*}
\E [ K_{S, S'} ] \le \lambda \delta^{1/2} + \frac{\lambda C_T}{f(\delta^{-1/2})}.
\end{align*}
The same arguments show that
\begin{align*}
\E [ L_{S, S'} ] \le \lambda \delta^{1/2} + \frac{\lambda C_T}{f(\delta^{-1/2})}.
\end{align*}
We can now conclude that for $\e>0$,
\begin{align*}
\P(| X_{S'}^{N, 1 } - X_S^{N , 1 }|>\e) \leq & \P(I_{S,S'}>0)+\P(J_{S,S'}>\e/4)+\P(K_{S,S'}>\e/4)
+\P(L_{S,S'}>\e/4)\\
\leq& \delta^{1/2} + \frac{C_T}{f^{- 1 } (\delta^{-1/2}) } + \frac4\e
\Big((1+2\lambda) \delta^{1/2} + \frac{C_T}{f^{- 1 } (\delta^{-1/2} ) } + 
\frac{2\lambda C_T}{f(\delta^{-1/2})} \Big).
\end{align*}
This last quantity does not depend on $N\geq 1$ nor on $(S,S')\in A_{\delta,T}$ and tends
to $0$ as $\delta\to 0$. This completes the proof.
\end{proof}

We now turn to the consistency result.

\begin{proof}[Proof of Theorem \ref{theo:6}-(iii)]
We consider a probability distribution $g_0$
on $\R_+$ such that $\int_0^\infty x g_0(dx)<\infty$ and, for each $N\geq 1$,
the unique solution $(X^N_t)_{t\geq 0}$ to \eqref{eq:dyn} starting from
some i.i.d. $g_0$-distributed initial conditions $X^{N,i}_0$.
We introduce $\mu_N=N^{-1}\sum_{i=1}^N \delta_{(X^{N,i}_t)_{t\geq 0}}$, which is a 
$\cP(\D(\R_+))$-valued random variable. By Theorem \ref{theo:6}-(ii), 
this sequence is tight. We thus consider a (not relabeled) subsequence $\mu_N$ going
in law to some $\cP(\D(\R_+))$-valued random variable $\mu$. We want to show that
$\mu$ a.s.\ belongs to $\cS:=\{\cL((Y_t)_{t\geq 0}) \, : \, (Y_t)_{t\geq 0}$
solution to \eqref{eq:dynlimit} with $\cL(Y_0)=g_0$ and satisfying $\int_0^t \E[Y_sf(Y_s)]ds <\infty$ 
for all $t\geq 0\}$.

\vip

{\it Step 1.} For $t\geq 0$, we introduce $\pi_t:\D(\R_+)\mapsto \R_+$ 
defined by $\pi_t(\gamma)=\gamma_t$. We claim that 
$Q\in\cP(\D(\R_+))$ belongs to $\cS$ if the following conditions are satisfied:

\vip
(a) $Q\circ \pi_0^{-1}=g_0$;

\vip
(b) for all $t\geq 0$, $\int_{\D(\R_+)}\intot \gamma_sf(\gamma_s) ds Q(d\gamma)<\infty$;

\vip
(c) for any $ 0 \le s_1 < \ldots < s_k < s < t$, any $\varphi_1,\dots,\varphi_k 
\in C_b ( \R_+)$, any $\varphi \in C^2_b (\R_+)$,
\begin{align*}
F(Q):=&\int_{{\mathcal D} ( \R_+ )}   \int_{{\mathcal D}  (\R_+ )}   
Q ( d \gamma ) Q ( d \tilde \gamma ) \; \varphi_1 ( \gamma_{s_1}  ) \ldots  
\varphi_{k} (\gamma_{s_k} ) \\
&\Big[ \varphi ( \gamma_t) - \varphi ( \gamma_s) - \int_s^t f( \gamma_u) 
( \varphi ( 0) - \varphi (\gamma_u ) ) du - \int_s^t \varphi' ( \gamma_u ) 
[ f (\tilde \gamma_u ) + \lambda ( \tilde \gamma_u - \gamma_u ) ] du\Big]=0 .
\end{align*}

Indeed, consider a process $(Y_t)_{t\geq 0}$ of which the law $Q$ satisfies the
above three points. By (a), $\cL(Y_0)=g_0$. By (b),
$\int_0^t \E[Y_sf(Y_s)]ds <\infty$ for all $t\geq 0$. Finally, (c) implies that
for all $\varphi \in C^2_b(\R_+)$,
$$
\varphi(Y_t)- \intot [\varphi(0)-\varphi(Y_s)]f(Y_s)ds - \intot \varphi'(Y_s)\Big[ \E[f(Y_s)]+
\lambda(\E[Y_s]-Y_s) \Big]ds
$$
is a martingale. By Jacod and Shiryaev \cite[Theorem II.2.42 page 86]{js}, this implies that $Y$ 
is a semimartingale with characteristics 
$ (B, C, \nu) $ given by 
$$ B_t = \int_0^t \Big[\lambda
(\E[Y_s]-Y_s) + \E[f(Y_s)] +  Y_s f(Y_s) \Big] ds ,\;  C_t=0 , \; \nu (ds, dx) =  f( Y_{s-}) ds
\delta_{ - Y_{s-}} (dx) . $$
We have chosen the truncation function $ h(x) = x$ (i.e.\ {\it no truncation}) 
since $Y$ possesses only large jumps. Finally, 
\cite[Theorem III. 2.26 page 157]{js} implies that there is a Poisson measure $\bN(ds,dz)$ on $\R_+\times\R_+$
with intensity $dsdz$ such that $Y$ solves \eqref{eq:dynlimit}.

\vip

{\it Step 2.} Here we check that for any $t\geq 0$, a.s., $\mu(\{\gamma \, : \, \Delta\gamma(t)\ne 0\})=0$.
We assume by contradiction that there exists $t > 0 $ 
such that $\mu ( \{ \gamma  : \Delta \gamma (t) \neq 0 \} ) > 0 $
with positive probability. Hence there are $a,b>0$ such that the event 
$E:=\{\mu ( \{ \gamma : |\Delta \gamma (t) | > a  \} ) > b\}$ has a 
positive probability. For every $\varepsilon > 0$, we have
$E\subset \{ \mu (  \cB^\varepsilon_a  ) > b\}$, where 
$\cB^\varepsilon_a := \{ \gamma : \sup_{ s \in (t- \varepsilon , t + \varepsilon)}| \Delta \gamma ( s) | > a \}$, which
is an open subset of $\D ( \R_+ )$. Thus $\cP^{\varepsilon}_{a,b} := 
\{  Q \in {\cP} ( {\D} ( \R_+) ) : Q (  \cB^\varepsilon_a  ) > b \}$
is an open subset of $ {\cP} ( {\D} ( \R_+) )$. 
The Portmanteau theorem implies then that for any $\e>0$,
$$ 
\liminf_{N \to \infty } \P ( \mu_N \in \cP^{\varepsilon}_{a,b}  ) \geq \P ( \mu \in \cP^{\varepsilon}_{a,b}  ) 
\geq \P ( E)  > 0.
$$
But, for all $N>1/a$ (so that for each $i=1,\dots, N$, the only jumps of $X^{N,i}$ that may exceed $a$ are those 
produced by the Poisson measure $\bN^i$),
\begin{align*}
\{\mu_N \in \cP^{\varepsilon}_{a,b}\} \subset& \Big\{\frac1N \sum_{ i= 1 }^N  
\indiq_{\{ \int_{t- \varepsilon}^{t + \varepsilon}  \indiq_{ \{ z \le f( X^{N, i }_{u- } ) \}} \bN^i (du, dz) \geq 1\}} \geq b\Big\},
\end{align*}
whence, using exchangeability,
\begin{align*}
\P ( \mu_N \in \cP^{\varepsilon}_{a,b}  ) 
\le & \frac{1}{b N} \sum_{i=1}^N \E \Big(\int_{t- \varepsilon}^{t + \varepsilon}  \indiq_{ \{ z \le f( X^{N, i }_{u- } ) \}} \bN^i (du, dz) 
 \Big)=   \frac{1}{b} \int_{t- \varepsilon}^{t + \varepsilon} \E [ f ( X_u^{ N, 1} ) ] du.
\end{align*}
We now observe that for any $A>0$,
$\int_{t- \varepsilon}^{t + \varepsilon} \E [ f ( X_u^{ N, 1} ) ] du \leq 2\varepsilon f(A) + A^{-1}
\int_{t- \varepsilon}^{t + \varepsilon} \E [ X_u^{N,1}f ( X_u^{ N, 1} ) ] du \leq 2\varepsilon f(A) + C A^{-1}$ by
\eqref{ethop}. Choosing $A=f^{-1}(\e^{-1/2})$, we conclude that 
$\int_{t- \varepsilon}^{t + \varepsilon} \E [ f ( X_u^{ N, 1} ) ] du \leq C(\sqrt\varepsilon + 1/f^{-1}(\e^{-1/2}))$,
which does not depend on $N$ and tends to $0$ as $\e\to 0$. We thus have the contradiction
$$ 
0 < \P ( E) \le \liminf_{\varepsilon \to 0 } \liminf_{N \to \infty } \P ( \mu_N  \in \cP^{\varepsilon}_{a, b}) =0.
$$

{\it Step 3.} Our limit $\mu$ a.s. satisfies (a), because $\mu \circ \pi_0^{-1}$ 
is the limit in law of $\mu^N \circ \pi_0^{-1}=N^{-1}\sum_{i=1}^N \delta_{X^{N,i}_0}$,
which goes to $g_0$ because the $X^{N,i}_0$ are i.i.d. with common law
$g_0$. It also a.s. satisfies (b) since for all $t\geq 0$ and $K > 0,$ using the Fatou Lemma 
and \eqref{ethop},
\begin{align*}
\E\Big[\int_{\D(\R_+)}\intot [(\gamma_sf(\gamma_s) )\wedge K ]ds \mu(d\gamma)\Big]
\leq& \liminf_N \E\Big[\int_{\D(\R_+)}\intot [(\gamma_sf(\gamma_s))\wedge K] ds \mu_N(d\gamma)\Big]\\
= & \liminf_N N^{-1} \sum_{i=1}^N\intot \E[X^{N,i}_sf(X^{N,i}_s)]ds < \infty.
\end{align*}
The conclusion follows by letting $K \to \infty .$ 

\vip

{\it Step 4.} It remains to check that $\mu$ a.s. satisfies (c). We thus consider
$F:\D(\R_+)\mapsto \R$ as in (c).

\vip

{\it Step 4.1.} Here we prove that $\lim_N\E[|F(\mu_N)|]=0$.
We have 
\begin{align*}
F( \mu_N) =& \frac1N \sum_{i= 1}^N \varphi_1 ( X^{N, i }_{s_1} ) 
\ldots \varphi_k ( X^{N, i }_{s_k} ) \\
&\Bigg[ \varphi (X^{N, i }_{t})  -  \varphi (X^{N, i }_{s})  - \int_s^t f(  X^{N, i }_{u}) 
[\varphi (0) -   \varphi (X^{N, i }_{u}) ] du 
- \lambda \int_s^t  \varphi' (X^{N, i }_{u})  (\bar X^N_u -  X^{N, i }_{u}) du\\
& \hskip5cm - \int_s^t \varphi' (X^{N, i }_{u}) \frac1N \sum_{j=1}^N f ( X_u^{ N, j})  du 
\Bigg] .
\end{align*}
But recalling \eqref{eq:dyn} and using the It\^o formula for jump processes,
\begin{align*}
\varphi ( X_t^{N, i } ) =& \varphi (X_0^{N, i } ) + 
\int_0^t \! \int_0^\infty \!
[ \varphi ( 0 ) - \varphi( X^{N, i }_{u-} ) ] \indiq_{ \{ z \le f( X^{N, i }_{u- } ) \}} 
\bN^{i} (du, dz) 
+ \lambda \intot \varphi'( X^{N, i }_u)(\bar X^N_u -X^{N,i}_u)du \\
&+ \sum_{ j \neq i } \int_0^t \int_0^\infty \Big( \varphi ( X^{N, i }_{ u - } + \frac1N ) 
- \varphi ( X_{u-}^{N, i } ) \Big) \indiq_{ \{ z \le f( X_{u-}^{N, j } ) \}} \bN^j (du, dz).
\end{align*}
Consequently, using the notation $\tilde \bN^i (du, dz ) = \bN^i (du, dz ) - du dz$ and setting
\begin{align*}
M_t^{N, i }  := & \int_0^t \int_0^\infty
[ \varphi ( 0 ) - \varphi( X^{N, i }_{u-} ) ] \indiq_{ \{ z \le f( X^{N, i }_{u- } ) \}} 
\tilde \bN^{i} (du, dz),\\
\Delta_t^{N, i } :=& \sum_{ j \neq i } \! \int_0^t \! \int_0^\infty \!\!\!  \Big( \varphi ( X^{N, i }_{ u - } + 
\frac1N ) - \varphi ( X_{u-}^{N, i } ) \Big) \indiq_{ \{ z \le f( X_{u-}^{N, j } ) \}}\bN^j (du, dz) 
\!-\!  \int_0^t \! \varphi' (X^{N, i }_{u}) \frac1N \sum_{j=1}^N \! f ( X_u^{ N, j} ) du,
\end{align*}
we see that 
$$ 
F(\mu_N)  =  \frac1N \sum_{i= 1}^N \varphi_1 ( X^{N, i }_{s_1} ) \ldots 
\varphi_k ( X^{N, i }_{s_k} ) \big[ ( M_t^{N, i } - M_s^{N, i } ) 
+ ( \Delta_t^{N, i } - \Delta_s^{N, i } ) \big] .
$$ 
Since the Poisson measures $\bN^i$ are i.i.d., the martingales $M^{N, i }$
are orthogonal.
Using exchangeability and the boundedness of the $\varphi_k$, we thus find that
\begin{equation}\label{eq:318}
\E [ |F ( \mu_N) | ] \le C_F  \frac{1}{\sqrt{N}} 
\E [ ( M_t^{N, 1} - M_s^{N, 1 } )^2]^{1/2}  + C_F \E[  | \Delta_t^{N, 1}| +|\Delta_s^{N, 1 }|]. 
\end{equation} 
First, since $\varphi$ is bounded and using \eqref{ethop} (recall that 
$f(x)\leq f(1)+xf(x)$), 
$$
\E[( M_t^{N, 1} - M_s^{N, 1 } )^2]=\int_s^t \E[(\varphi ( 0 ) - \varphi( X^{N, 1}_{u} ))^2
f( X^{N, 1}_{u} )]  du \leq C _F \intot \E[f( X^{N, 1}_{u} )]  du \leq C_F. 
$$
Next,
\begin{align*}
|  \Delta_t^{N, 1 }| \le & \int_0^t \int_0^\infty \Big|\varphi ( X^{N, 1 }_{ u - } + \frac1N ) - 
\varphi ( X_{u-}^{N, 1 } )\Big| \indiq_{ \{ z \le f( X_{u-}^{N, 1 } ) \}}\bN^1 (du, dz) \\
& + \Big| \sum_{j=1 }^N \int_0^t \int_0^\infty  \big( \varphi ( X^{N, 1 }_{ u - } + \frac1N ) - 
\varphi ( X_{u-}^{N, 1 } ) \big) \indiq_{ \{ z \le f( X_{u-}^{N, j} ) \}}\tilde \bN^j (du, dz)\Big| 
\\
& + \sum_{j=1}^N \int_0^t \Big| \varphi ( X^{N, 1 }_{ u} + \frac1N ) - 
\varphi ( X_{u}^{N, 1 } ) - \frac1N \varphi' (X_u^{N, 1 } )\Big| f( X_u^{N, j} ) du \\
& =: I^N_t+J^N_t+K^N_t.
\end{align*}
Using that $\varphi'$ is bounded and \eqref{ethop}, we find
$$ 
\E [ I^N_t ] \le \frac{C_F}{N} \int_0^t \E[f ( X_u^{N, 1})] du 
\le \frac{C_F}{N}.
$$ 
Moreover, since $\varphi''$ is bounded and by \eqref{ethop} again,
$$ 
\E [ K^N_t] \le \frac{C_F}{N^2}\sum_{j=1}^N \int_0^t \E[f ( X_u^{N, j} )] du \le \frac{C_F}{N}.
$$
Finally, using the independence of the Poisson measures $\bN^j$, that 
$\varphi'$ is bounded and \eqref{ethop},
$$ 
\E [(J^N_t)^2] = \sum_{j=1}^N \intot \E\Big[\big( \varphi ( X^{N, 1 }_{ u} + \frac1N ) - 
\varphi ( X_{u}^{N, 1 } ) \big)^2 f( X_{u}^{N, j} )\Big] du 
\le \frac{C_F}{N^2 } \sum_{ j = 1}^N \int_0^t \E [ f( X_u^{N, j } ) ] du \le \frac{C_F}{N}.
$$
All this implies that $\E [ |  \Delta_t^{N, 1 }| ] \le C_F/\sqrt{N}$ whence, coming 
back to \eqref{eq:318}, $\E [ |F ( \mu_N) | ]  \le C_F/ \sqrt{N}$.

\vip

{\it Step 4.2.} Clearly, $F$ is continuous at any point $Q\in \cP(\D(\R_+))$
such that $Q(\gamma\, : \, \Delta\gamma(s_1)=\dots=\Delta\gamma(s_k)=\Delta\gamma(s)
=\Delta\gamma(t)=0)=1$ and such that $\int_{\D(\R_+)}\intot [\gamma_u+f(\gamma_u)]du Q(d\gamma)<\infty$.
Our limit point $\mu$ a.s. satisfies these two conditions by Steps 2 and 3 (because
$x+f(x)\leq C(1+xf(x))$).
Since $\mu$ is the limit in law of $\mu_N$ and since $F$ is a.s. continuous at $\mu$,
we thus deduce that for any $K>0$, $\E[|F(\mu)|\land K]=\lim_N \E[|F(\mu_N)|\land K]$.
Consequently, $\E[|F(\mu)|\land K] \leq \limsup_N \E[|F(\mu_N)|]$ for all $K>0$. 
Using Step 4.1, we deduce that $\E[|F(\mu)|\land K] =0$ for any $K>0$.
By the monotone convergence theorem, we conclude that $\E[|F(\mu)|]=0$, whence
$F(\mu)=0$ a.s.
\end{proof}

We can finally study the well-posedness of the nonlinear SDE.

\begin{proof}[Proof of Theorem \ref{theo:2}]
Point (i) (weak existence assuming only Assumption \ref{ass:1} and that $\E[Y_0]<\infty$) follows
from Theorem \ref{theo:6}-(ii)-(iii): we have built at least one weak solution,
passing to the limit in the particle system, and we have seen that this solution
satisfies that $\int_0^t \E[Y_s f(Y_s)]ds <\infty$ for all $t\geq 0$.

\vip

For point (ii) (strong well-posedness under Assumption \ref{ass:1} 
when $g_0=\cL(Y_0)$ is
compactly supported), we only have to check that the solution built in point (i)
satisfies that there is a deterministic locally bounded function $A:\R_+\mapsto\R_+$
such that a.s., $Y_t\leq A(t)$ for all $t\geq 0$. This will conclude the proof,
since such a weak existence result, together with the path-wise uniqueness
proven in Proposition \ref{unicomp}, will imply the strong well-posedness.
We thus assume that Supp $g_0\subset [0,K]$ and set $A(t):=K+\intot (\lambda \E[Y_s]+\E[f(Y_s)])ds$,
which is clearly locally bounded since  $\int_0^t \E[Y_s f(Y_s)]ds <\infty$ for all $t\geq 0$.
Then it is obvious, recalling \eqref{eq:dynlimit}, that a.s., for all $t\geq 0$,
$Y_t\leq A(t)$. 

\vip

To check point (iii) (strong well-posedness under Assumptions \ref{ass:1} and \ref{ass:2}
when $\E[f(Y_0)]<\infty$),
it suffices to prove that the solution built in point (i)
satisfies $\sup_{[0,t]} \E[f(Y_s)]<\infty$ for all $t\geq 0$. Again, this weak
existence, together with the strong uniqueness of Proposition \ref{unipascomp}, will
complete the proof. 
Put $C(t):=\intot (\lambda \E[Y_s]+\E[f(Y_s)])ds$, which is again locally bounded, and observe from
\eqref{eq:dynlimit}, that a.s., for all $t\geq 0$, $Y_t\leq Y_0+C(t)$. Since $\E[f(Y_0)]<\infty$, 
we immediately conclude, using Remark \ref{rk1}-(ii), 
that $\sup_{[0,t]} \E[f(Y_s)]<\infty$ for all $t\geq 0$, as desired.
\end{proof}

Finally, we can give the

\begin{proof}[Proof of Theorem \ref{theo:6}-(iv)] 
First grant Assumption \ref{ass:1} and assume that $g_0$ is compactly supported.
We have seen in Theorem \ref{theo:6}-(ii)-(iii) that $\mu_N$ is tight and that
any limit point $\mu$ a.s. belongs to $\cS=\{\cL((Y_t)_{t\geq 0}) \, : \, (Y_t)_{t\geq 0}$
solution to \eqref{eq:dynlimit} with $\cL(Y_0)=g_0$ and satisfying $\int_0^t \E[Y_sf(Y_s)]ds <\infty$ 
for all $t\geq 0\}$. But arguing as in the proof of Theorem \ref{theo:2}-(ii),
we see that $\cS=\cS'$, where $\cS'=\{\cL((Y_t)_{t\geq 0}) \, : \, (Y_t)_{t\geq 0}$
solution to \eqref{eq:dynlimit} with $\cL(Y_0)=g_0$ and such that a.s., for all $t\geq 0$, $Y_t\leq A(t)$
for some deterministic locally bounded function $A\}$. As seen in Theorem 
\ref{theo:2}-(ii),
$\cS'$ is reduced to one point. The conclusion follows: $\mu_N$ goes in probability,
as $N\to \infty$, to the unique element of $\cS'$.

\vip

Next grant Assumptions \ref{ass:1} and \ref{ass:2} and assume that 
$\int_0^\infty f(y)g_0(dy)<\infty$. 
We have seen in Theorem \ref{theo:6}-(ii)-(iii) that $\mu_N$ is tight and that
any limit point $\mu$ a.s. belongs to $\cS=\{\cL((Y_t)_{t\geq 0}) \, : \, (Y_t)_{t\geq 0}$
solution to \eqref{eq:dynlimit} satisfying $\int_0^t \E[Y_sf(Y_s)]ds <\infty$ 
for all $t\geq 0\}$. But arguing as in the proof of Theorem \ref{theo:2}-(ii),
we see that $\cS=\cS''$, where $\cS''=\{\cL((Y_t)_{t\geq 0}) \, : \, (Y_t)_{t\geq 0}$
solution to \eqref{eq:dynlimit} with $\cL(Y_0)=g_0$ and  such that $\sup_{[0,t]}\E[f(Y_s)]<\infty$ 
for all $t\geq 0\}$.
As seen in Theorem \ref{theo:2}-(iii),
$\cS''$ is reduced to one point. The conclusion follows.
\end{proof}

\section{Quantified propagation of chaos}\label{sec:5}

The aim of this section is to prove Theorem \ref{theo:4}. We thus impose
Assumptions \ref{ass:1}, \ref{ass:2} and \ref{ass:3} and we fix an initial distribution $g_0$ 
such that $ \int_0^\infty f^2 (x) g_0 (dx) < \infty$.
We consider an i.i.d.\ family $ X_0^{ N, i }$ of $g_0$-distributed random variables,
an i.i.d.\ family of Poisson measures $\bN^i(ds,dz)$ on $\R_+\times\R_+$ with intensity
measure $dsdz$, we denote, for each $N\geq 1$, by $(X^N_t)_{t\geq 0}=
(X^{N,1}_t,\dots,X^{N,N}_t)_{t\geq 0}$ the solution to \eqref{eq:dyn}. Finally, we denote by
$(Y^{N,i}_t)_{t\geq 0}$, for every $N\geq 1$, every $i =1,\dots, N$, the path-wise unique
(thanks to Theorem \ref{theo:2}-(iii)) solution to \eqref{eq:dynlimit}
starting from $X^{N,i}_0$ and driven by the Poisson measure $\bN^i$. Obviously,
for every $N\geq 1$, the processes $(Y^{N,i}_t)_{t\geq 0}$, $i=1,\dots,N$, are i.i.d.

\vip

To prove Theorem \ref{theo:4}, we will essentially mimic the path-wise uniqueness proof
of Theorem \ref{theo:2} to control $\sup_{[0,T]}\E[|H(X^{N,1}_t)-H(Y^{N,1}_t)|]$ by $C_T/\sqrt N$.
But there are a number of technical difficulties.
First, we need to work on $[0,\tau_N^T]$, for some well-chosen stopping time
$\tau_N^T$ that is asymptotically greater than $T$.
Next, we will rather study $\E[(N^{-1}+(H(X^{N,1}_t)-H(Y^{N,1}_t))^2)^{1/2}]$: this changes nothing to the result,
but allows for a more rigorous proof (we apply the It\^o formula to a true $C^2$ function)
and allows for the control of a second derivative, see Lemma \ref{tech}-(i), that would explode
without the additional $N^{-1}$ term.
We start with some more moment estimates.

\begin{lem}\label{lem:2}
(i) For all $T>0$, there is $C_T$ depending only on $T$, $\lambda$, $g_0$ and $f$ such that
$$ 
\E \Big[\sup_{[0,T]} f^2 ( Y_t^{N,1} ) \Big] \le C_T
\quad \hbox{and}\quad \sup_N \E \Big[\sup_{[0,T]}f^2 ( X_t^{ N, 1} )\Big] \le C_T.
$$

(ii) For all $T\geq 1$, we can find a constant $R_T>0$ such that the stopping time
$$ 
\tau_N^T := \inf \{ t\geq 0 : N^{-1} \sum_{i=1}^N (f ( X_t^{N, i} ) + f (Y_t^{N, i } ) ) \geq R_T \}
$$
satisfies, for some constants 
$C>0$ (and $C_T$) depending only on $\lambda$, $g_0$ and $f$ (and $T$).
$$ 
\P ( \tau_N^T \le T ) \le \frac{C}{N} \quad \hbox{and}\quad
\E \Big[\sup_{[0,T]}( 1 + f ( X_t^{ N, 1} ) + f (Y_t^{ N, 1 } ) ) \indiq_{ \{ \tau_N^T \le T \} } \Big]
\le \frac{C_T}{\sqrt{N} }.
$$
\end{lem}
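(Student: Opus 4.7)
The plan is to combine the deterministic a priori bounds from Propositions \ref{estap} and \ref{prop:2} with standard concentration estimates for the initial data and for the empirical Poisson clock $Z_t^N$. For part (i), Proposition \ref{estap} gives $Y_t^{N,1}\le Y_0^{N,1}+C(1+t)$ a.s., so Remark \ref{rk1}-(ii) yields $f(Y_t^{N,1})\le C_T(1+f(Y_0^{N,1}))$ on $[0,T]$; squaring and using $\E[f^2(Y_0)]<\infty$ gives the first bound. For the particle system, Proposition \ref{prop:2} together with the monotonicity of $t\mapsto Z_t^N$ gives $\sup_{[0,T]} X_t^{N,1}\le X_0^{N,1}+C_T(\bar X_0^N+Z_T^N)$. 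Iterated use of Assumption \ref{ass:3} and Remark \ref{rk1}-(iv) then produces
$$
\sup_{[0,T]} f^2(X_t^{N,1})\le C_T\bigl(1+f^2(X_0^{N,1})+f^2(\bar X_0^N)+f^2(Z_T^N)\bigr).
$$
I would handle $\E[f^2(\bar X_0^N)]$ using convexity of $f$ (Jensen) and then Cauchy-Schwarz: $f^2(\bar X_0^N)\le N^{-1}\sum_{i=1}^N f^2(X_0^{N,i})$, whose expectation equals $\E[f^2(Y_0)]$. For $\E[f^2(Z_T^N)]$, Remark \ref{rk1}-(iii) gives $f^2(x)\le Ce^{2Cx}$, and since $NZ_T^N$ is Poisson$(Nf(2)T)$-distributed, the moment generating function $\E[e^{2CZ_T^N}]=\exp(Nf(2)T(e^{2C/N}-1))$ stays bounded uniformly in $N$.

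For part (ii), I would set $M_t^N:=N^{-1}\sum_{i=1}^N(f(X_t^{N,i})+f(Y_t^{N,i}))$ and build an event of probability $\ge 1-C/N$ on which $\sup_{[0,T]} M_t^N\le R_T$ for a suitable constant $R_T$. Four bad events of probability $O(1/N)$ suffice. The first, $\{\bar X_0^N\ge 2m\}$ with $m=\E[Y_0]$, is controlled by Chebyshev using $\E[X_0^2]<\infty$, which follows from Remark \ref{rk1}-(i) and the standing hypothesis $\E[f^2(Y_0)]<\infty$. The second, $\{Z_T^N\ge 2f(2)T\}$, has probability even $e^{-cN}$ by \eqref{poiss}. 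The third, $\{N^{-1}\sum_i f(X_0^{N,i})\ge K_1\}$ for $K_1$ large, is again Chebyshev via $\E[f^2(X_0)]<\infty$. The fourth, $\{N^{-1}\sum_i \sup_{[0,T]} f(Y_t^{N,i})\ge K_2\}$ for $K_2$ large, is Chebyshev applied to the i.i.d.\ variables $\sup_{[0,T]} f(Y_t^{N,i})$ whose second moment is bounded by $C_T$ thanks to part (i). On the complement of the first two events, Proposition \ref{prop:2} gives $\sup_{[0,T]} X_t^{N,i}\le X_0^{N,i}+C_T$ for every $i$, and Remark \ref{rk1}-(ii) upgrades this to $\sup_{[0,T]} f(X_t^{N,i})\le C_T(1+f(X_0^{N,i}))$; averaging in $i$ and combining with the third and fourth events yields the desired pathwise bound $\sup_{[0,T]} M_t^N\le R_T$, whence $\P(\tau_N^T\le T)\le C/N$.

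The second assertion in (ii) then follows from Cauchy-Schwarz:
$$
\E\Big[\sup_{[0,T]}(1+f(X_t^{N,1})+f(Y_t^{N,1}))\indiq_{\{\tau_N^T\le T\}}\Big]\le \E\Big[\sup_{[0,T]}(1+f(X_t^{N,1})+f(Y_t^{N,1}))^2\Big]^{1/2}\P(\tau_N^T\le T)^{1/2},
$$
where part (i) bounds the first factor by $C_T$ and the first estimate of (ii) bounds the second by $(C/N)^{1/2}$, giving the $C_T/\sqrt{N}$ bound. The main technical point to watch is calibrating the four bad events so that each is genuinely of probability $O(1/N)$ while the good event still forces a uniform-in-$t\in[0,T]$ bound on $M^N$; this is why second-moment Chebyshev rather than first-moment Markov is needed for (a), (c), (d), and accordingly why the hypothesis $\E[f^2(Y_0)]<\infty$ is essential for this lemma.
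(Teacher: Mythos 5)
Your proof is correct and uses the same core ingredients as the paper: the pathwise a priori bounds \eqref{ttl1} and \eqref{tt1}, Remark \ref{rk1}-(ii)--(iv), convexity of $f$, the sub-exponential growth $f(x)\le Ce^{Cx}$ together with the Poisson moment generating function to control $\E[f^2(Z_T^N)]$, Chebyshev for the empirical averages, and Cauchy--Schwarz for the last assertion. The only noteworthy difference is in the decomposition for part (ii). The paper first combines \eqref{ttl1}, \eqref{tt1}, Remark \ref{rk1}-(iv) and the convexity bound $f(\bar X_0^N)\le N^{-1}\sum_i f(X_0^{N,i})$ into a single pathwise inequality
$\sup_{[0,T]}N^{-1}\sum_{i}\bigl(f(X_t^{N,i})+f(Y_t^{N,i})\bigr)\le C_T\bigl(1+f(Z_T^N)+N^{-1}\sum_i f(X_0^{N,i})\bigr)$,
so that only two random quantities ($Z_T^N$ and $N^{-1}\sum_i f(X_0^{N,i})$) need to be controlled. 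You instead split into four Chebyshev-type events, separately controlling $\bar X_0^N$, $Z_T^N$, $N^{-1}\sum_i f(X_0^{N,i})$, and $N^{-1}\sum_i\sup_{[0,T]}f(Y_t^{N,i})$. Your version is valid and gives the same rates, but is a bit less economical; note also that the event $\{\bar X_0^N\ge 2m\}$ should be replaced by, say, $\{\bar X_0^N\ge m+1\}$ so that Chebyshev applies uniformly (including the degenerate case $m=0$, which the standing hypotheses of Section \ref{sec:5} do not exclude).
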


\begin{proof}
Recalling \eqref{ttl1}, it a.s.\ holds that for all $t\geq 0$,
$Y_t^{N,1} \le X_0^{ N,1} + C(1+t)$. Using Remark \ref{rk1}-(ii) and that 
$\E[f^2(X_0^{N,1})]=\int_0^\infty f^2(x) g_0 (dx)<\infty$, we immediately deduce that
$\E [\sup_{[0,T]} f^2 ( Y_t^{N,1} )] \leq C_T$.

\vip

Next, \eqref{tt1} tells us that a.s., for all $t\geq 0$, 
$ X_t^{N,1} \le X_0^{N,1} + C(1+T) ( \bar X_0^N + Z_T^N )$. By Remark \ref{rk1}-(iv),
$$ 
\sup_{[0,T]} f^2  ( X_t^{N, 1}) \le C_T (1+ f^2 (X_0^{N, 1} )+ f^2 ( \bar X_0^N ) + f^2 ( Z_T^N) ).
$$
But $f^2 $ being convex, $ f^2 ( \bar X_0^N ) \le N^{-1} \sum_{i=1}^N f^2 ( X_0^{N, i} )$. Consequently,
$\E[\sup_{[0,T]} f^2(X^{N,1}_t)]\leq C_T(1+ \int_0^\infty f^2(x) g_0 (dx)+\E[f^2(Z_T^N)])$.
To end the proof of (i), it suffices to recall that $Z^N_T$ is the mean of $N$ i.i.d. Poisson$(Tf(2))$-distributed
random variables: since $f(x)\leq Ce^{Cx}$ by Remark \ref{rk1}-(iii), a simple computation shows that indeed,
$\sup_N\E[f^2(Z_T^N)]<\infty$.

\vip

Using again \eqref{ttl1} and \eqref{tt1}, we see that a.s., for all $t\in [0,T]$,
all $i=1,\dots,N$, $ X_t^{N,i} \le X_0^{N,i} + C(1+T) ( \bar X_0^N + Z_T^N )$ and $Y_t^{N,i} \le X_0^{ N,i} + C(1+t)$.
Consequently, using Remark \ref{rk1}-(iv) and the convexity of $f$
(whence $f(\bar X_0^N) \leq N^{-1}\sum_{i=1}^N f(X^{N,i}_0)$),
$$ 
\sup_{[0,T]} \frac1N \sum_{i=1}^N  (f ( X_t^{N, i} ) + f (Y_t^{N, i } ) ) \le C_T \Big( 1 + f (Z_T^N ) 
+ \frac1N \sum_{i=1}^N f ( X_0^{N, i } ) \Big) .
$$
The bounds $ \P ( Z_T^N \geq 2 f(2) T ) \le \exp(- N T f(2) (3 - e ) )$, see \eqref{poiss},
and
$$ \P \Big( \frac1N \sum_{i=1}^N f ( X_0^{N, i } ) \geq \int_0^\infty f(x) g_0(dx) + 1 \Big) 
\le \frac{ {\rm Var} ( f (X_0^{N, 1 } ) ) }{N } \le \frac{C}{ N} $$
imply that, with the choice $R_T=C_T(1+f(2f(2)T)+\int_0^\infty f(x) g_0(dx) + 1)$,
$$
\P(\tau_N^T\leq T) \leq  \exp(- N T f(2) (3 - e ) ) +C/N \leq C /N
$$ 
as desired.
The last inequality immediately follows, using (i) and the Cauchy-Schwarz inequality. 
\end{proof}

We carry on with a technical lemma similar to Lemma \ref{lem:1}.

\begin{lem}\label{tech}
Grant Assumptions \ref{ass:1} and \ref{ass:2} and recall that $H(x)=f(x)+\arctan x$. Define, for $N\geq 1$,
$a_N ( x, y) := [ N^{-1} + ( H(x) - H(y))^2]^{1/2}$.

\vip

(i) It holds that $|\partial_x a_N (x, y ) | \le H' (x)$ and 
$| \partial_{xx} a_N ( x, y ) | \le |H'' (x) | + \sqrt{N} (H' (x) )^2$.

\vip

(ii) We have $| \partial_x a_N (x, y ) + \partial_y a_N ( x, y ) | \le | H' (x) - H' (y) |$.

\vip

(iii) There is $C>0$ such that $-[x \partial_x a_N( x, y ) + y \partial_y a_N ( x, y )] \le C a_N ( x, y )$.

\vip

(iv) Finally, there is $C>0$ such that
\begin{align*}
\Delta_N(x,y):=& (f(x) \wedge f(y)) [ a_N ( 0,0 ) - a_N ( x, y ) ] + (f(x) - f(y) )_+ [a_N ( 0, y ) - a_N ( x, y) ] 
\\
&+ ( f(y) - f(x) )_+ [ a_N ( x, 0 ) - a_N ( x, y ) ] \\
\leq & \frac{f(x) \wedge f(y) }{\sqrt{N} } + C a_N ( x, y ).
\end{align*}
\end{lem}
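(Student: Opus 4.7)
The plan is to compute the derivatives of $a_N$ explicitly and then bound each resulting expression using three elementary facts: $a_N(x,y)\ge N^{-1/2}$, $|H(x)-H(y)|\le a_N(x,y)$, and the Minkowski-type estimate $a_N(x,y)\le N^{-1/2}+|H(x)-H(y)|$. These are combined with the pointwise estimates on $H$, $H'$, $H''$, $xH'$ and $f$ already compiled in Lemma \ref{lem:1}. Throughout I will use that $H$ is strictly increasing, so that $\sg(H(x)-H(y))=\sg(x-y)$.

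For (i), direct differentiation gives $\partial_x a_N=(H(x)-H(y))H'(x)/a_N$, and $|H(x)-H(y)|/a_N\le 1$ yields at once $|\partial_x a_N|\le H'(x)$. A short computation simplifies the second derivative to
\[
\partial_{xx}a_N\;=\;\frac{N^{-1}(H'(x))^2}{a_N^3}+\frac{(H(x)-H(y))H''(x)}{a_N};
\]
bounding $N^{-1}/a_N^3\le\sqrt N$ on the first piece and $|H(x)-H(y)|/a_N\le 1$ on the second delivers the claim. Point (ii) is immediate from $\partial_x a_N+\partial_y a_N=(H(x)-H(y))(H'(x)-H'(y))/a_N$ and $|H(x)-H(y)|/a_N\le 1$. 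For (iii) I write
\[
-[x\partial_x a_N+y\partial_y a_N]\;=\;\frac{|H(x)-H(y)|\,\bigl[-\sg(x-y)(xH'(x)-yH'(y))\bigr]}{a_N(x,y)},
\]
apply Lemma \ref{lem:1}-(iii) to the square bracket, and use $|H(x)-H(y)|\le a_N(x,y)$ once more.

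The real work is (iv). Since $\Delta_N(x,y)=\Delta_N(y,x)$ (check the three symmetries $a_N(y,x)=a_N(x,y)$, $a_N(0,x)=a_N(x,0)$, $a_N(y,0)=a_N(0,y)$), I may assume $x\le y$, so that $f(x)\wedge f(y)=f(x)$, $(f(x)-f(y))_+=0$, $(f(y)-f(x))_+=f(y)-f(x)$, and $H(x)\le H(y)$; hence
\[
\Delta_N(x,y)\;=\;f(x)\bigl[a_N(0,0)-a_N(x,y)\bigr]+(f(y)-f(x))\bigl[a_N(x,0)-a_N(x,y)\bigr].
\]
Using $a_N(0,0)=N^{-1/2}$, the Minkowski bound $a_N(x,0)\le N^{-1/2}+H(x)$, and $a_N(x,y)\ge H(y)-H(x)$, this expression is dominated by
\[
\frac{f(y)}{\sqrt N}\;+\;\Bigl(-f(x)(H(y)-H(x))+(f(y)-f(x))(2H(x)-H(y))\Bigr).
\]
The bracketed expression is exactly the left-hand side of Lemma \ref{lem:1}-(iv) specialised to $x\le y$, so it is bounded by $C(H(y)-H(x))\le C\,a_N(x,y)$. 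To convert $f(y)/\sqrt N$ into $(f(x)\wedge f(y))/\sqrt N=f(x)/\sqrt N$ one observes $(f(y)-f(x))/\sqrt N\le f(y)-f(x)\le H(y)-H(x)\le a_N(x,y)$ (since $N\ge 1$ and $H=f+\arctan$), which only adds a further multiple of $a_N(x,y)$. The main obstacle is precisely recognising the Lemma \ref{lem:1}-(iv) pattern hidden in the bracketed expression; everything else is routine manipulation.
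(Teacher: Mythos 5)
Your proof is correct. Points (i) and (ii) match the paper's unstated "direct computations." For (iii), you factor out $|H(x)-H(y)|/a_N\le 1$ and then invoke Lemma~\ref{lem:1}-(iii); the paper instead reproduces inline the split $H'(x)=f'(x)+1/(1+x^2)$ from the proof of Lemma~\ref{lem:1}-(iii). These are morally the same.

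For (iv), however, you take a genuinely different route. After symmetrizing to $x\le y$ and applying the Minkowski-type bounds $a_N(x,0)\le N^{-1/2}+H(x)$ and $a_N(x,y)\ge H(y)-H(x)$, you recognise the residual bracketed expression as exactly the left-hand side of Lemma~\ref{lem:1}-(iv) specialised to $x\le y$, and quote that lemma; the resulting leading term is $f(y)/\sqrt N$, which you then correct to $f(x)/\sqrt N$ via $(f(y)-f(x))/\sqrt N\le H(y)-H(x)\le a_N(x,y)$. The paper instead regroups $\Delta_N$ (still assuming $x\le y$, implicitly) as
\[
\Delta_N=\frac{f(x)\wedge f(y)}{\sqrt N}-\bigl(f(x)\vee f(y)\bigr)a_N(x,y)+|f(x)-f(y)|\,a_N(0,x\wedge y),
\]
which keeps the target term $(f(x)\wedge f(y))/\sqrt N$ clean from the outset; it then bounds $|f(x)-f(y)|\le a_N(x,y)$, factors out $a_N(x,y)$, and shows the remaining factor $a_N(0,x\wedge y)-f(x)\vee f(y)$ is bounded by the constant $1+\pi/2$, with no appeal to Lemma~\ref{lem:1}-(iv). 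Both arguments are sound and roughly the same length. What your version buys is an explicit structural link between the uniqueness lemma (Lemma~\ref{lem:1}-(iv)) and its discretised counterpart here, which is conceptually illuminating; what the paper's version buys is a slightly cleaner bookkeeping that never creates the wrong leading constant $f(y)/\sqrt N$ and so needs no correction step, and that requires only the crude constant bound $\arctan\le\pi/2$ rather than the full strength of Lemma~\ref{lem:1}-(iv).
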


\begin{proof}
Points (i) and (ii) follow from direct computations. For (iii), using the expression of $H$,
\begin{align*}
-[x \partial_x a_N( x, y ) + y \partial_y a_N ( x, y )]=& \frac{-(H(x)-H(y))}{ [ N^{-1} + ( H(x) - H(y))^2]^{1/2}}
[x H'(x) - yH'(y)]\\
=&\frac{-(H(x)-H(y))}{ [ N^{-1} + ( H(x) - H(y))^2]^{1/2}}
[x f'(x) - yf'(y)]\\
&+\frac{-(H(x)-H(y))}{ [ N^{-1} + ( H(x) - H(y))^2]^{1/2}}
\Big[\frac x{1+x^2} -\frac y{1+y^2}\Big].
\end{align*}
The first term on the RHS is non-positive, because both $H(x)$ and $xf'(x)$ are non-decreasing. 
The second one is roughly bounded
by $|x/(1+x^2)-y/(1+y^2)|\leq |x-y|$ which is bounded, recalling Lemma \ref{lem:1}-(ii), by
$C|H(x)-H(y)|\leq C a_N(x,y)$. To prove (iv), we first observe, since $a_N$ is symmetric
and $f$ is non-decreasing, that
\begin{align*}
 \Delta_N ( x, y) 
= \frac{f(x) \wedge f(y) }{\sqrt{N} } - ( f(x) \vee f(y)) a_N ( x, y )  + |f(x) - f(y) | a_N ( 0, x \wedge y ) .
\end{align*}
Noting that $  |f(x) - f(y) | \le  |H(x) - H(y) |\le a_N ( x, y )$, we deduce that
\begin{align*}
\Delta_N ( x, y) \le &\frac{f(x) \wedge f(y) }{\sqrt{N} } + a_N ( x, y ) ( a_N ( 0, x \wedge y ) - f(x) 
\vee f(y)).
\end{align*}
The conclusion follows, since $a_N(0,x\wedge y) - f(x) \vee f(y)\leq N^{-1/2} + H(x)\wedge H(y) - f(x) \vee f(y)$,
which is obviously bounded by $1+\pi/2$.
\end{proof}

We are now ready to give the

\begin{proof}[Proof of Theorem \ref{theo:4}] We fix $T>0$ and define $R_T$ 
and $\tau_N^T$ as in Lemma \ref{lem:2}-(ii).  In the whole proof, we work on the time interval $[0,T]$.
Recall that $a_N$ and $\Delta_N$ were defined in Lemma \ref{tech}.
\vip

{\it Step 1.} This is the main step of the proof. We show
that there is a constant $C_T$ such that for all $N\geq 1$,
$\sup_{[0,T]} \E [ a_N ( X^{N, 1 }_{t \wedge \tau_N^T}, Y^{ N, 1 }_{ t \wedge \tau_N^T } ) ] \leq C_T N^{-1/2}$.
Applying the It\^o formula for jump processes, we find that
$$
\E [ a_N ( X^{N, 1 }_{t \wedge \tau_N^T}, Y^{ N, 1 }_{ t \wedge \tau_N^T } ) ] = N^{-1/2}+I + J + \lambda K+ \lambda L , 
$$
where
\begin{align*}
I=&\E\Big[ \int_0^{t \wedge \tau_N^T } \Delta_N ( X^{N, 1 }_s, Y^{N, 1 }_s ) ds \Big],\\
J=&\sum_{j=2}^N \E \Big[\int_0^{t \wedge \tau_N^T }  f ( X^{N, j }_s) [ a_N ( X^{N, 1 }_s + \frac1N , Y^{N, 1 }_s) 
- a_N (  X^{N, 1 }_s, Y^{N, 1 }_s ) ]ds \Big]\\ 
&+ \E \Big[\int_0^{t \wedge \tau_N^T }  \partial_y a_N ( X^{N, 1 }_s, Y^{N, 1 }_s )  \E [ f( Y_s^{N, 1 } ) ] ds\Big], \\
K=&- \E \Big[ \int_0^{t \wedge \tau_N^T }  \big( \partial_x a_N ( X^{N, 1 }_s, Y^{N, 1 }_s ) X_s^{ N, 1} 
+ \partial_y a_N ( X^{N, 1 }_s, Y^{N, 1 }_s ) Y_s^{N, 1 }  \big) ds \Big],\\
L=& \E \Big[\int_0^{t \wedge \tau_N^T }  \big( \partial_x a_N ( X^{N, 1 }_s, Y^{N, 1 }_s ) \bar X_s^{ N} 
+ \partial_y a_N ( X^{N, 1 }_s, Y^{N, 1 }_s ) \E [Y_s^{N, 1 }]   \big) ds\Big].
\end{align*} 

By Lemma \ref{tech}-(iv) and Lemma \ref{lem:2}-(i),
$$
I \le \frac 1{\sqrt{N}}\intot \E[f(Y^{N,1}_s)] ds 
+ C \E\Big[ \int_0^{t\land \tau_N^T} a_N ( X^{N, 1 }_{s}, Y^{N, 1 }_{s} ) \Big] ds
\leq  \frac {C_T}{\sqrt{N}}
+ C \int_0^t \E [a_N ( X^{N, 1 }_{s \wedge \tau_N^T}, Y^{N, 1 }_{s \wedge \tau_N^T} ) ] ds.
$$

Lemma \ref{tech}-(iii) implies that
$$ 
K \le C \E\Big[ \int_0^{t\land \tau_N^T} a_N ( X^{N, 1 }_{s}, Y^{N, 1 }_{s} ) \Big] ds
\leq C \int_0^t \E [a_N ( X^{N, 1 }_{s \wedge \tau_N^T}, Y^{N, 1 }_{s \wedge \tau_N^T} ) ] ds.
$$

We next write $L =L_1 + L_2 + L_3 $, with 
\begin{align*}
L_1 = & \E\Big[ \int_0^{t \wedge \tau_N^T }   \partial_x a_N ( X^{N, 1 }_s, Y^{N, 1 }_s ) [ \bar X_s^{ N} - \bar Y_s^{ N}] ds 
\Big], \\
L_2 = & \E \Big[\int_0^{t \wedge \tau_N^T }  [ \partial_x a_N ( X^{N, 1 }_s, Y^{N, 1 }_s ) 
+ \partial_y a_N ( X^{N, 1 }_s, Y^{N, 1 }_s )] \bar Y_s^{ N} ds\Big] , \\
L_3 =& \E \Big[\int_0^{t \wedge \tau_N^T } \partial_y a_N ( X^{N, 1 }_s, Y^{N, 1 }_s ) ( \E [ Y_s^{ N, 1 } ] - \bar Y_s^{ N} )
ds\Big].
\end{align*}
Using the Cauchy-Schwarz inequality, Lemma \ref{tech}-(i) and the fact that
the $Y^{N,i}_s$ are i.i.d., 
$$ 
L_3 \le \frac1 {\sqrt N} \int_0^t \E [ H' ( Y^{N, 1 }_s  )^2 ]^{1/2} ({\rm Var}\; Y_s^{ N, 1 })^{1/2} ds 
\leq \frac{C_T}{\sqrt N}.
$$
The last inequality follows from Lemma \ref{lem:1}-(i), which tells us that 
$ x+H' (x) \le C(1+ f(x))$, whence
$\sup_{[0,T]} \E [ H' ( Y^{N, 1 }_s  )^2 ] \leq C_T$ and $\sup_{[0,T]}{\rm Var}\; Y_s^{ N, 1 } \leq C_T$
by Lemma \ref{lem:2}-(i).
Next, Lemmas \ref{tech}-(ii) and \ref{lem:1}-(ii) tell us that 
$|\partial_x a_N (x,y)+ \partial_y a_N (x,y)|\leq |H'(x)-H'(y)|\leq C|H(x)-H(y)|\leq
C a_N(x,y)$. Consequently,
$$ 
L_2 \le C \E \Big[\int_0^{t \wedge \tau_N^T }  |\bar Y_s^{ N}|  a_N(X^{N, 1 }_s,Y^{N, 1 }_s) ds \Big] \leq
C_T \int_0^t \E [a_N ( X^{N, 1 }_{s \wedge \tau_N^T}, Y^{N, 1 }_{s \wedge \tau_N^T} ) ] ds.
$$
We used that, by definition of $\tau_N^T$ and since $x\leq C(1+f(x))$ (see Lemma \ref{lem:1}-(i)), 
$|\bar Y_s^{ N}|\leq C(1+ N^{-1}\sum_{i=1}^N f(X^{N,i}_s)) \leq C(1+R_T)$ for all $s\in [0,\tau_N^T]$ a.s.
Finally, using that $\tau_N^T$ does not break the exchangeability and Lemma \ref{tech}-(i), we write
\begin{align*}
L_1  =& \frac 1N \sum_{j=1}^N\E \Big[\int_0^{t \wedge \tau_N^T }   \partial_xa_N(X_s^{N, 1},Y^{N,1}_s) 
[X_s^{N,j} - Y_s^{N,j}] ds \Big] \\
=&\frac 1N \sum_{j=1}^N\E \Big[\int_0^{t \wedge \tau_N^T }   \partial_xa_N(X_s^{N,j},Y^{N,j}_s) 
[X_s^{N,1} - Y_s^{N,1}] ds \Big]\\
\leq & \E \Big[\int_0^{t \wedge \tau_N^T }   \Big(\frac 1 N \sum_{j=1}^N H'(X_s^{N,j})\Big)|X_s^{N,1} - Y_s^{N,1}| ds \Big]\\
\leq & C_T  \int_0^t \E [a_N ( X^{N, 1 }_{s \wedge \tau_N^T}, Y^{N, 1 }_{s \wedge \tau_N^T} ) ] ds.
\end{align*} 
The last inequality uses that, by definition of $\tau_N^T$ and since $H'(x)\leq C(1+f(x))$ 
(see Lemma \ref{lem:1}-(i)), $|N^{-1}\sum_{j=1}^N H' (X_s^{N, j })|\leq C_T$ for all $s\in [0,\tau_N^T]$ a.s. It
also uses that $|x-y| \leq C|H(x)-H(y)|\leq Ca_N(x,y)$ by Lemma \ref{lem:1}-(ii).

\vip 

We finally write $ J = J_1 + J_2 + J_3 + J_4 $, where, using again exchangeability,
\begin{align*}
J_1 =&  \E  \Big[\int_0^{t \wedge \tau_N^T }   f ( X^{N, 2 }_s) \Big( (N-1) [ a_N ( X^{N, 1 }_s + \frac1N , Y^{N, 1 }_s) 
- a_N (  X^{N, 1 }_s, Y^{N, 1 }_s )] - \partial_x a_N ( X_s^{ N, 1 } , Y_s^{N, 1 } ) \Big)  ds\Big],\\
J_2 = &\E \big[ \int_0^{t \wedge \tau_N^T }  f ( X^{N, 2 }_s) ( \partial_x a_N ( X_s^{ N, 1 } , Y_s^{N, 1 } ) 
+\partial_y a_N ( X_s^{ N, 1 } , Y_s^{N, 1 } )) ds \Big],\\
J_3 =& \E \Big[\int_0^{t \wedge \tau_N^T }  \partial_y a_N ( X_s^{ N, 1 } , Y_s^{N, 1 } ) 
[ f ( Y^{N, 2 }_s) -f ( X^{N, 2 }_s) ] ds \Big], \\
J_4 =&  \E \Big[\int_0^{t \wedge \tau_N^T }  \partial_y a_N ( X_s^{ N, 1 } , Y_s^{N, 1 } ) 
[ \E [ f ( Y^{N, 2 }_s)]  -f ( Y^{N, 2 }_s) ] ds \Big].
\end{align*}
We start with $J_1$. Using Lemma \ref{tech}-(i),
\begin{align*}
&|(N-1)[a_N(x+1/N,y)-a_N(x,y)] -\partial_x a_N(x,y)|\\
\leq & |a_N(x+1/N,y)-a_N(x,y)|+ |N[a_N(x+1/N,y)-a_N(x,y)] -\partial_x a_N(x,y)| \\
\leq & N^{-1} \sup_{z\in [x,x+1/N]}[ |\partial_x a_N(z,y)| +|\partial_{xx} a_N(z,y)| ]\\
\leq & N^{-1} \sup_{z\in [x,x+1/N]}[ H'(z) + |H''(z)|+ \sqrt N (H'(z))^2 ] \\
\leq & C N^{-1/2} (1+f^2(x)).
\end{align*}
The last inequality uses that $|H''(x)|\leq C H'(x)$ (see Lemma \ref{lem:1}),
the fact that $H'(x) \leq C(1+f(x))$ (see Lemma \ref{lem:1}-(i)) and that
$\sup_{[x,x+1/N]} f(z) \leq C(1+f(x))$ (see Remark \ref{rk1}-(iv)). Consequently,
$$
J_1 \leq \frac C {\sqrt N} \E\Big[\int_0^{t \wedge \tau_N^T }   f ( X^{N, 2 }_s)(1+f^2(X^{N,1}_s)) ds \Big].
$$
By Lemmas \ref{tech}-(ii) and \ref{lem:1}-(ii),
$| \partial_x a_N (x, y ) + \partial_y a_N ( x, y ) | \le | H' (x) - H' (y) | \leq C|H(x)-H(y)|\leq C a_N(x,y)$. 
Hence
$$
J_2 \leq  C \E\Big[\int_0^{t \wedge \tau_N^T }   f ( X^{N, 2 }_s) a_N(X^{N,1}_s,Y^{N,1}_s) \Big].
$$
Lemmas \ref{tech}-(i) and \ref{lem:1}-(i) imply that $|\partial_y a_N (x,y)| \leq H'(y) \leq C(1+f(y))$ 
and we obviously have $|f(x)-f(y)|\leq |H(x)-H(y)|\leq a_N(x,y)$. It follows that
$$
J_3 \leq  C \E\Big[\int_0^{t \wedge \tau_N^T }   (1+f ( Y^{N,1}_s)) a_N(X^{N,2}_s,Y^{N,2}_s) \Big]
= C \E\Big[\int_0^{t \wedge \tau_N^T }   (1+f ( Y^{N,2}_s)) a_N(X^{N,1}_s,Y^{N,1}_s) \Big].
$$
We have checked that
$$
J_1+J_2+J_3 \leq  C \E\Big[\int_0^{t \wedge \tau_N^T }   (1+f(X^{N,2}_s)+f ( Y^{N,2}_s)) \Big(a_N(X^{N,1}_s,Y^{N,1}_s) 
+\frac{1+f^2(X^{N,1}_s)}{\sqrt N}\Big)ds\Big].
$$
Using exchangeability and then the definition of $\tau_N^T$, we thus can write
\begin{align*}
J_1+J_2+J_3 \leq&  C \E\Big[\int_0^{t \wedge \tau_N^T }  \Big(\frac 1 N \sum_{j=1}^N (1+f(X^{N,j}_s)+f ( Y^{N,j}_s))\Big) 
\Big(a_N(X^{N,1}_s,Y^{N,1}_s) 
+\frac{1+f^2(X^{N,1}_s)}{\sqrt N}\Big)ds\Big]\\
\leq & C(1+R_T) \E\Big[\int_0^{t \wedge \tau_N^T } \Big(a_N(X^{N,1}_s,Y^{N,1}_s) 
+\frac{1+f^2(X^{N,1}_s)}{\sqrt N}\Big)ds\Big]\\
\leq & C_T \intot \E[a_N( X^{N, 1 }_{s \wedge \tau_N^T}, Y^{N, 1 }_{s \wedge \tau_N^T} )] ds + \frac {C_T }{\sqrt N}.
\end{align*}
The last inequality uses that $\sup_N \sup_{[0,T]} \E[f^2(X^{N,1}_t)] <\infty$ by Lemma \ref{lem:2}-(i).
Finally, using again exchangeability, 
that $|\partial_y a_N (x,y)| \leq C(1+f(y))$, the Cauchy-Schwarz inequality and that the
$Y^{N,i}_s$ are i.i.d.,
\begin{align*}
J_4 = & \E\Big[ \int_0^{t\wedge \tau_N^T} \partial_y a_N(X^{N,1}_s,Y^{N,1}_s) \Big(\E [ f ( Y^{N, 2 }_s)]
- \frac 1{N-1} \sum_{j=2}^N f(Y^{N,j}_s) \Big)ds\Big] \\
\leq & C \intot  \E[(1+f(Y^{N,1}_s))^2]^{1/2} \frac{[ {\rm Var} f(Y^{N, 1 }_s)]^{1/2}}{\sqrt{N-1}} ds.
\end{align*}
Again, we conclude that $J_4 \leq C_T N^{-1/2}$ since 
$\sup_{[0,T]} \E [f^2 ( Y_t^{ N, 1 } )] < C_T$, as shown in Lemma \ref{lem:2}.

\vip

All in all, we have checked that  $\E[a_N( X^{N, 1 }_{t \wedge \tau_N^T}, Y^{N, 1 }_{t \wedge \tau_N^T} )]
\leq C_TN^{-1/2} + C_T \intot  \E[a_N( X^{N, 1 }_{s \wedge \tau_N^T}, Y^{N, 1 }_{s \wedge \tau_N^T} )]ds$.
We conclude the step with the help of the Gronwall Lemma.

\vip

{\it Step 2.} It is not hard to complete the proof. First, gathering Step 1 
(recall that $|H(x)-H(y)|\leq a_N(x,y)$)
and Lemma \ref{lem:2}-(ii) (recall that $H(x)\leq \pi/2+f(x)$), we find, for all $t\in[0,T]$,
$$
\E[|H(X^{N,1}_t)-H(Y^{N,1}_t)|] \leq \E[|H(X^{N,1}_{t\land \tau_N^T})-H(Y^{N,1}_{t\land \tau_N^T})|] 
+ \E[(H(X^{N,1}_t)+H(Y^{N,1}_t))\indiq_{\{\tau_N^T\leq T\}}]\leq \frac{C_T}{\sqrt N}.
$$
Moreover, $|x-y|\leq C |H(x)-H(y)|$ by Lemma \ref{lem:1}-(ii), whence 
$\sup_{[0,T]}\E[|X^{N,1}_t-Y^{N,1}_t|] \leq C_T N^{-1/2}$.

\vip

We next assume additionally that $\int_0^\infty y^{2+\e} g_0(dy)<\infty$ for some $\e>0$. Recalling 
\eqref{ttl1}, this obviously implies that $\sup_{[0,T]} \E[(Y_t^{N,1})^{2+\e}] \leq C_T$.
Since the $Y^{N,i}_t$ are i.i.d.\ $\R$-valued random variables, it is well-known, see e.g. 
\cite[Theorem 1 with $d=1,p=1,q=2+\e$]{fg}, that 
$$
\E\Big[\cW_1\Big(N^{-1}\sum_{i=1}^N \delta_{Y^{N,i}_t},\cL(Y_t^{N,1})\Big)\Big] \leq 
\frac{C \E[(Y_t^{N,1})^{2+\e}]^{1/(2+\e)}}{\sqrt N}
\leq \frac{C_T}{\sqrt N}.
$$
But it follows from exchangeability that 
$$
\E\Big[ \cW_1\Big(N^{-1}\sum_{i=1}^N \delta_{X^{N,i}_t},
N^{-1}\sum_{i=1}^N \delta_{Y^{N,i}_t}\Big)\Big] \leq \frac 1 N \sum_{i=1}^N \E[|X^{N,i}_t-Y^{N,i}_t|]
= \E[|X^{N,1}_t-Y^{N,1}_t|]\leq \frac{C_T}{\sqrt N}.
$$
Using the triangular inequality for $\cW_1$, we conclude that for all $t\in [0,T]$,
$$
\E\Big[\cW_1\Big(N^{-1}\sum_{i=1}^N \delta_{X^{N,i}_t},\cL(Y^{N,1}_t)\Big)\Big] \leq \frac{C_T}{\sqrt N}
$$
as desired.
\end{proof}

\section{Invariant distributions}\label{sec:6}

Here we prove Theorem \ref{theo:41}.
We thus only impose Assumption \ref{ass:1}.
We start with the following remark.

\begin{prop}\label{SDElin}
Let $\bN$ be a Poisson measure on $\R_+\times\R_+$ with intensity $dsdz$,  let $\lambda \geq 0$ and $a\geq 0$.

\vip

(i) The $\R_+$-valued SDE
\begin{equation}\label{La}
Z_t=Z_0 - \intot \int_0^\infty Z_{s-} \indiq_{\{z\leq f(Z_{s-})\}}\bN(ds,dz) + \intot (a-\lambda Z_s)ds
\end{equation}
has a path-wise unique solution for every nonnegative initial condition $Z_0$.

\vip

(ii) Furthermore, \eqref{La} has a unique invariant probability measure $g_a$. It is given by $g_0=\delta_0$
if $a=0$ and by $g_a(dx)=g_a(x)dx$ if $a>0$, where (with the convention that $a/\lambda=\infty$ if
$\lambda=0$),
$$ 
g_a (x) = \frac{p_a}{a - \lambda x } \exp \Big( - \int_0^x \frac{f(y) }{a - \lambda y} dy \Big) 
\indiq_{\{ 0 \le y < a/\lambda \} },$$
where  $p_a>0$ is such that 
$ \int_0^\infty g_a(x) dx = 1$. It automatically holds that $\int_0^\infty f(x)g_a(dx)=p_a$.
\end{prop}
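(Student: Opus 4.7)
For (i), the plan is to mirror the proof of Proposition \ref{theo:1}: truncate $f$ by $f\wedge K$, so that the jump rate is bounded and the Ikeda--Watanabe theorem (Theorem IV.9.1 in \cite{IW}) produces a path-wise unique strong solution $Z^{K}$ on $[0,\infty)$; then glue the $Z^K$ along $\tau_K:=\inf\{t:Z^K_t\geq K\}$ to obtain a unique solution up to $\tau_\infty:=\sup_K \tau_K$. To rule out explosion, I would read directly from \eqref{La} that the Poisson integral is non-positive and $-\lambda Z_s\leq 0$, so a.s.\ $Z_t\leq Z_0+at$, forcing $\tau_\infty=\infty$.

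For the existence part of (ii), I would verify the candidate $g_a$ solves the stationary forward equation with generator $\cL\phi(x)=(a-\lambda x)\phi'(x)+f(x)(\phi(0)-\phi(x))$. When $a=0$, $\cL\phi(0)=0$ automatically (since $f(0)=0$), so $\delta_0$ is invariant. When $a>0$, the formula for $g_a$ is precisely the positive solution on $[0,a/\lambda)$ of the ODE $[(a-\lambda x)g(x)]'=-f(x)g(x)$ with $g(0)=p_a/a$. The key checks are: (a) integrability of $g_a$ --- for $\lambda>0$, near $a/\lambda$ the exponential behaves like $(a-\lambda x)^{f(a/\lambda)/\lambda}$, which tames the $(a-\lambda x)^{-1}$ factor since $f(a/\lambda)>0$; for $\lambda=0$, integrability at $\infty$ follows from $\lim_\infty f=\infty$; (b) the identity $\int f g_a\,dx=p_a$, obtained by integrating the ODE from $0$ to $a/\lambda$ and using that $(a-\lambda x)g_a(x)\to 0$ at both endpoints. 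A single integration by parts then gives $\int \cL\phi\cdot g_a\,dx=0$ for all $\phi\in C^1_c(\R_+)$, proving invariance.

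For uniqueness I would exploit the regenerative structure of the PDMP. After the first spike, $Z$ equals $0$, and successive inter-spike intervals are i.i.d.\ with law $\P(T>t)=\exp\bigl(-\int_0^t f(\varphi_s)\,ds\bigr)$, where $\varphi_t=(a/\lambda)(1-e^{-\lambda t})$ (or $at$ if $\lambda=0$) is the flow from $0$. When $a>0$, $T$ has a continuous density on $(0,\infty)$ and $\E[T]=1/p_a<\infty$; classical regenerative-process theory then forces $\cL(Z_t)$ to converge in total variation, uniformly in the initial condition, to the image under $s\mapsto\varphi_s$ of $\P(T>s)/\E[T]\,ds$, which after the change of variables $y=\varphi_s$ is precisely $g_a$. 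Any invariant probability must equal this limit, so uniqueness follows. When $a=0$, the drift is non-positive and the jumps send $Z$ to $0$, so $Z_t\to 0$ a.s.\ and only $\delta_0$ can be invariant.

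The main obstacle I expect is the uniqueness assertion: making the regenerative convergence fully rigorous, in particular showing it is uniform in the starting law (equivalently, that $\{0\}$ is a regenerative atom for the semigroup). A more self-contained alternative would work directly from the identity $\int \cL\phi\,d\nu=0$ satisfied by any invariant $\nu$: by testing against a well-chosen family of $\phi$ one first excludes atoms on $(0,\infty)$, then shows $\nu$ has a density satisfying the ODE above, whose positive solutions form a one-parameter family pinned down by the normalization $\int g_a=1$.
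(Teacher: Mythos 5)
Your proposal is correct and matches the paper's approach in all essentials. For (i) the paper does not truncate $f$: since all coefficients of \eqref{La} are locally Lipschitz, local strong well-posedness holds directly, and the same a.s.\ bound $Z_t\leq Z_0+at$ rules out explosion; your truncation route is equally valid but slightly more work. For (ii), the paper proves uniqueness exactly via the regenerative structure you describe: it observes that $Z_t\geq\min\{z,a/\lambda\}$ on $[0,\tau_0)$, so the jump rate to $0$ is bounded below, hence $\E_z(\tau_0)<\infty$ and $0$ is a positive Harris recurrent regenerative state, which gives uniqueness by classical Markov process theory. It then verifies the formula for $g_a$ by checking the stationary forward equation via integration by parts, as you do. The one concern you flag, namely uniformity over the starting law, is handled in the paper precisely by the $\E_z(\tau_0)<\infty$ bound; this is the point to make rigorous, and your inter-spike decomposition together with the observation that the spike rate is bounded below does exactly that. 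Your computation $\E[T]=1/p_a$ is also correct and would let you recover $g_a$ directly from the cycle formula for regenerative processes, which is a marginally more self-contained way to pin down the density than checking the forward equation, but is not what the paper does.
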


\begin{proof}
Point (i) is straightforward. All the coefficients being locally Lipschitz-continuous, we have {\it local} 
strong existence and uniqueness, i.e. strong existence and uniqueness on $[0,\tau)$, where $\tau=\inf\{
t\geq 0\, : \, Z_t = \infty\}$. But having a look at \eqref{La}, we see that a.s., for all $t\geq 0$,
$Z_t \leq Z_0+at$. Hence $\tau=\infty$ a.s.

\vip

Point (ii) is straightforward if $a=0$. Indeed, $\delta_0$ is clearly an invariant distribution.
It is unique, because for any initial condition, $Z_t$ tends a.s. to $0$ as $t\to \infty$.
Indeed, if $\lambda>0$, then $0 \leq Z_t \leq e^{-\lambda t} Z_0$.
If now $\lambda=0$, then $Z_t=Z_0\indiq_{\{t < \tau_0\}}$, where $\tau_0$ follows an exponential 
distribution with parameter $f(Z_0)$ (conditionally on $Z_0$).

\vip

We next prove (ii) when $a>0$.
We first claim that the homogeneous Markov process $Z$ has exactly one invariant 
probability distribution which is supported in $[0,a/\lambda]$ (or $[0,\infty)$ if $\lambda=0$).
This follows from the classical theory of Markov processes, 
since $0$ is a positive Harris recurrent state of $Z.$ Indeed, let
$\tau_0=\inf\{ t\geq 0 \, : \, Z_t=0\}$. Then for any initial 
condition $z>0$, $\E_z(\tau_0)<\infty$. This can be easily checked, using e.g. that starting from $z>0$,
$Z_t=e^{-\lambda t}z + (1-e^{-\lambda t})a/\lambda \geq \min\{z,a/\lambda\}$ for all $t\in [0,\tau_0)$, so that
$Z$ jumps to zero with a rate bounded from below by $\min\{f(z),f(a/\lambda)\}>0$. As a consequence, 
the successive jump times of $Z$ to $0$ induce a regeneration scheme, and $Z$ is positive Harris 
recurrent implying the uniqueness of the invariant probability measure. Moreover, it is clear that
$Z_t \le a/\lambda$ for every $t\geq \tau_0$, which implies that the support of the invariant 
probability is included in $[0, a/\lambda ] .$

\vip

It thus only remains to check that $g_a$ is indeed an invariant probability measure for \eqref{La}.
The computations below include the case where $\lambda=0$.
It suffices to prove that for all $\phi \in C^1_b(\R_+)$,
\begin{equation}\label{obj}
\int_0^\infty [\phi(0)-\phi(x)]f(x)g_a(dx)+\int_0^\infty \phi'(x)(a-\lambda x)g_a(dx)=0.
\end{equation}
Indeed, the infinitesimal generator associated to the SDE \eqref{La} is given by
$\cL_a \phi (x) = [\phi(0)-\phi(x)]f(x)+\phi'(x)(a-\lambda x)$.
First, a direct computation shows that
$$
\int_0^\infty\!\! f(x)g_a(dx)=p_a \int_0^{a/\lambda}\!\! \frac{f(x)}{a - \lambda x }  
\exp \Big( - \int_0^x \frac{f(y) }{a - \lambda y} dy \Big)dx=- p_a
\Big[ \exp \Big( - \int_0^x \frac{f(y) }{a - \lambda y} dy \Big)\Big]_{x=0}^{x=a/\lambda}=p_a.
$$
The last equality uses that $f(a/\lambda)>0$. Hence, \eqref{obj} reduces to
$$
\int_0^{a/\lambda} \phi(x) f(x) g_a(x)dx - \int_0^{a/\lambda} \phi'(x)(a-\lambda x)g_a(x)dx= \phi(0)p_a.
$$
Proceeding to an integration by  parts in the second integral and using that $f(x) g_a(x) 
+ [(a-\lambda x)g_a(x)]'=0$ for all $x \in (0,a/\lambda)$, we see that \eqref{obj} reduces to
$$
- \Big[\phi(x)(a-\lambda x)g_a(x) \Big]_{x=0}^{x=a/\lambda} = \phi(0)p_a.
$$
This is easily checked, since $ag_a(0)=p_a$ and since $\lim_{x\uparrow a/\lambda}(a-\lambda x)g_a(x)=0$.
\end{proof}

We now study for which values of $a$ an invariant measure of \eqref{La}
is an invariant measure of \eqref{eq:dynlimit}.

\begin{lem}\label{unimi}
Adopt the notation of Proposition \ref{SDElin}.
When $a=0$, we define $p_0=0=\int_0^\infty f(x)g_0(dx)$.
We also introduce, for $a\geq 0$, $m_a:=\int_0^\infty x g_a(dx)$. 
The equation $a=\lambda m_a +p_a$ has the solution $a=0$
and exactly one positive solution $a_*$. Furthermore, it holds that $a_* > \lambda$.
\end{lem}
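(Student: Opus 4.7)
The strategy is to reduce the equation $a=\lambda m_a+p_a$ to a scalar equation $I(a)=1$ for a function $I$ that is manifestly monotone.

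Set $F_a(x):=\int_0^x f(y)/(a-\lambda y)\,dy$, so that by Proposition \ref{SDElin} one has $(a-\lambda x)g_a(x)=p_a e^{-F_a(x)}$ on the support, as well as the stationarity identity $f(x)g_a(x)=-[(a-\lambda x)g_a(x)]'$. Taking expectations in \eqref{La} at the invariant distribution gives (no time dependence) $a-\lambda m_a=\int x f(x) g_a(dx)$. Integrating by parts, using $f g_a=-[(a-\lambda x)g_a]'$,
\begin{equation*}
\int_0^{a/\lambda}\!\! x f(x) g_a(x)\,dx=-\Big[x(a-\lambda x)g_a(x)\Big]_0^{a/\lambda}+\int_0^{a/\lambda}\!\!(a-\lambda x)g_a(x)\,dx=p_a\,I(a),
\end{equation*}
where $I(a):=\int_0^{a/\lambda}e^{-F_a(x)}\,dx$ (with the convention that the upper limit is $+\infty$ when $\lambda=0$, in which case $F_a(x)=F(x)/a$, $F(x)=\int_0^x f$). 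The boundary terms vanish trivially at $x=0$ and at $x=a/\lambda$ because $F_a(a/\lambda)=+\infty$ (since $f(a/\lambda)>0$ by Assumption \ref{ass:1} and the integrand behaves like $f(a/\lambda)/(a-\lambda y)$ near $y=a/\lambda$). Consequently, for $a>0$, the equation $a=\lambda m_a+p_a$ is equivalent to $p_a(I(a)-1)=0$, i.e.\ to $I(a)=1$.

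Now I would analyze $I$. For every fixed $x>0$ in the support, $a\mapsto F_a(x)$ is strictly decreasing in $a$, so $a\mapsto e^{-F_a(x)}$ is strictly increasing; together with the expanding range of integration $[0,a/\lambda]$, this makes $I$ continuous and strictly increasing on $(0,\infty)$. For the limit at $0$: when $\lambda>0$, $I(a)\le a/\lambda\to 0$; when $\lambda=0$, $e^{-F(x)/a}\to 0$ pointwise for $x>0$, and since $\int_0^\infty e^{-F(x)/a_0}\,dx<\infty$ for some $a_0>0$ (by $\lim_\infty f=\infty$, so $F(x)/x\to\infty$), dominated convergence gives $I(a)\to 0$. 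For the limit at $\infty$: for any $M>0$ and $a$ large enough that $a/\lambda>M$ (automatic when $\lambda=0$), $F_a(x)\to 0$ uniformly on $x\in[0,M]$ as $a\to\infty$, hence $I(a)\ge\int_0^M e^{-F_a(x)}\,dx\to M$. Thus $I$ is a continuous strictly increasing bijection $(0,\infty)\to(0,\infty)$, and there is a unique $a_*>0$ with $I(a_*)=1$.

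It remains to check $a_*>\lambda$. When $\lambda>0$, we have $I(\lambda)=\int_0^1 e^{-F_\lambda(x)}\,dx<1$ because $F_\lambda(x)>0$ for every $x\in(0,1)$ (as $f>0$ on $(0,\infty)$); by strict monotonicity of $I$, this forces $a_*>\lambda$. When $\lambda=0$, the conclusion $a_*>0=\lambda$ is automatic. The only nontrivial step of the proof is the integration-by-parts identity in the first paragraph that converts the nonlinear consistency condition into the scalar equation $I(a)=1$; once it is in hand, existence, uniqueness and the lower bound $a_*>\lambda$ all reduce to one-variable calculus on $I$.
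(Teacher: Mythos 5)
Your proof is correct, and it arrives at exactly the same scalar equation as the paper: your $I(a)=\int_0^{a/\lambda}e^{-F_a(x)}\,dx$ is precisely the paper's $\Gamma(a)$, and your analysis of its continuity, strict monotonicity, limits $I(0^+)=0$, $I(\infty)=\infty$ and the inequality $I(\lambda)<1$ is the same one-variable calculus (in fact you give more detail on the limits than the paper does). The difference lies in how the consistency condition is reduced to $I(a)=1$. The paper's route is purely algebraic: it writes $1/p_a=\Gamma_1(a)=\int_0^{a/\lambda}(a-\lambda x)^{-1}e^{-F_a(x)}dx$ and $m_a/p_a=\Gamma_2(a)=\int_0^{a/\lambda}x(a-\lambda x)^{-1}e^{-F_a(x)}dx$, rewrites $a=\lambda m_a+p_a$ as $a\Gamma_1(a)-\lambda\Gamma_2(a)=1$, and observes that the $(a-\lambda x)$ factors cancel to give $a\Gamma_1-\lambda\Gamma_2=\int_0^{a/\lambda}e^{-F_a}=\Gamma$. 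Your route instead starts from the generator identity with the test function $\phi(x)=x$, giving $a-\lambda m_a=\int xf(x)\,g_a(dx)$, and then integrates by parts using the stationary Fokker--Planck relation $f\,g_a=-[(a-\lambda x)g_a]'$ (which the paper already establishes) to identify $\int xf\,g_a=p_a I(a)$. This is slightly longer and requires one small extra observation — that the stationarity identity, proven for $\phi\in C^1_b$, extends to $\phi(x)=x$ — which is harmless since $g_a$ has compact support when $\lambda>0$ and superexponential decay when $\lambda=0$, but worth flagging. In exchange your derivation has a conceptual reading that the paper's algebraic cancellation lacks; both are valid, and the core of the argument (monotonicity of $\Gamma$ and $\Gamma(\lambda)<1$) is shared.
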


\begin{proof}
The proof below works whenever $\lambda >0$ or $\lambda=0$.
Evidently, $a=0$ solves $a=\lambda m_a +p_a$.
Let now $a>0$. Since $ \int_0^\infty g_a(dx) = 1$, we have
$$ 
\frac{1}{p_a} = \int_0^{ a / \lambda } \frac{1}{ a - \lambda x } 
\exp \Big(- \int_0^x \frac{f(y) }{a - \lambda y } dy  \Big) dx =: \Gamma_1 (a) .
$$
Next, 
$$ 
m_a=p_a \int_0^{a/\lambda } \frac{x}{ a - \lambda x } \exp \Big(  - \int_0^x \frac{f(y) }{a - \lambda y } dy  
\Big) dx =: p_a \Gamma_2(a).
$$
Hence $a$ solves  $a=\lambda m_a +p_a$ if and only if $a/p_a - \lambda m_a/p_a =1$, i.e. 
$a\Gamma_1(a)-\lambda\Gamma_2(a)=1$, i.e.
$$ 
\Gamma (a) := a \Gamma_1 (a) - \lambda \Gamma_2 (a )  
=\int_0^{a/\lambda } \exp \Big(- \int_0^x \frac{f(y) }{a - \lambda y } dy  \Big) dx=1.
$$
But $\Gamma$ is continuous and strictly increasing, $\Gamma(0)=0$ and $\Gamma(\infty)=\infty$, 
so that the equation
$\Gamma(a)=1$ has exactly one solution $a_*$. Finally, we obviously have $\Gamma(\lambda)<1$, so that
$a_*>\lambda$.
\end{proof}

We are now able to give the

\begin{proof}[Proof of Theorem \ref{theo:41}]
Consider an invariant probability measure $g$, supported by $\R_+$, for the nonlinear SDE \eqref{eq:dynlimit}.
Let $Y_0 \sim g$ and consider $(Y_t)_{t\geq 0}$ solution to \eqref{eq:dynlimit}. Then for all $t\geq 0$,
$Y_t \sim g$, so that $\E[Y_t]=m$ and $\E[f(Y_t)]=p$, where $m=\int_0^\infty x g(dx)$ and 
$p=\int_0^\infty f(x) g(dx)$. Consequently, $(Y_t)_{t\geq 0}$ solves \eqref{La} with $a=p+\lambda m$.
Since $(Y_t)_{t\geq 0}$ is stationary, we deduce from Proposition \ref{SDElin} that $g=g_a$.
But of course we have the constraint that $a=p_a+\lambda m_a$, whence $a=0$ or $a=a_*$ by Lemma 
\ref{unimi}. Hence either
$g=\delta_0$ or $g=g_{a_*}$.

\vip

Consider now $Y_0\sim g$, with $g=\delta_0$ or $g=g_{a_*}$. Then the solution $(Y_t)_{t\geq 0}$ to \eqref{La}
(with $a=0$ or $a=a_*$) is stationary by Proposition \ref{SDElin}. Since furthermore 
$\E[\lambda Y_t + f(Y_t)]=\int_0^\infty [\lambda x + f(x)]g(dx)=\lambda m_a+p_a = a$ by Lemma \ref{unimi}
since $a=0$ or $a=a_*$, we conclude that $(Y_t)_{t\geq 0}$ also
solves  \eqref{eq:dynlimit}. Consequently, $g$ is an invariant measure for \eqref{eq:dynlimit}.

\vip

We thus have checked that \eqref{eq:dynlimit} has exactly two invariant probability distributions, which are
$\delta_0$ and $g_{a_*}$. Finally $g_{a_*}$ is indeed the probability measure $g$ defined in the statement
(where $p=p_{a_*}$ and $m=m_{a_*}$)
and we have that $m+p/\lambda=a_*/\lambda>1$.
\end{proof}

\section{Shape of the time-marginals and large-time behavior}\label{sec:7}

The aim of this section is to prove Theorem \ref{theo:limitdensity}
and Propositions \ref{paszero} and \ref{formal}.
We thus consider $\lambda\geq 0$, grant Assumptions \ref{ass:1} and \ref{ass:2} and suppose that 
$\E [ f^2(Y_0)]  < \infty$ and $\P(Y_0=0)<1$.  
We consider the unique solution $(Y_t)_{t\geq 0}$ to \eqref{eq:dynlimit}, we set 
$p_t=\E [ f(Y_t) ]$, $m_t=\E[Y_t]$ , $a_t= \lambda m_t+ p_t$ and denote by $g(t)$ the law of $Y_t$.
We also recall that for $x \in [0,\infty)$ and $0\leq s<t$, $\varphi_{s, t } (x) = e^{ - \lambda (t-s) } x 
+ \int_s^t e^{ - \lambda ( t-u) } a_u du$. We also introduce 
\begin{equation}\label{eq:kappa}
\kappa_{s,t}(x)=\exp \Big(-\int_s^t f(\varphi_{s,u}(x))du\Big).
\end{equation}
Notice that $\varphi$ satisfies the flow property: 
one can directly check that for all $0 \leq r \leq s \leq t$, all $x \in [0,\infty)$,
$\varphi_{r,t}(y) = \varphi_{s,t}(\varphi_{r,s}(y))$.

\subsection{Time-marginals}

Let us first proceed to a few technical considerations.

\begin{lem}\label{cont}
Under the above conditions, 

\vip

(i) $t\mapsto m_t$, $t\mapsto p_t$ and $t\mapsto a_t$ are locally Lipschitz continuous
on $[0,\infty)$,

\vip

(ii) for all $t>0$, $\lim_{h\downarrow 0} h^{-1}\E[1-\kappa_{t-h,t}(Y_{t-h})]=p_t$.
\end{lem}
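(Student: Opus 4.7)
The plan for (i) is to derive explicit integral formulas for $m_t$ and $p_t$ whose integrands are a priori bounded on $[0,T]$ thanks to the moment estimates already at hand. Taking expectation in \eqref{eq:dynlimit} (the $\lambda$-term disappears by centering, and the Poisson integral is handled via its compensator using $\int_0^t\E[Y_sf(Y_s)]ds<\infty$ from \eqref{ttl2}), I would obtain
\[
m_t=m_0+\intot\bigl(p_s-\E[Y_sf(Y_s)]\bigr)ds.
\]
Applying the It\^o formula to $f(Y_t)$, legitimate since $f\in C^2$ by Assumption \ref{ass:2} and since every jump resets $Y$ to $0$ (so that $f(Y_s)-f(Y_{s-})=-f(Y_{s-})$), and then taking expectation, gives
\[
p_t=p_0+\intot\E\bigl[f'(Y_u)(a_u-\lambda Y_u)\bigr]du-\intot\E[f^2(Y_u)]du.
\]
Local Lipschitz continuity of $m$, $p$, and hence of $a=\lambda m+p$, then reduces to bounding each integrand on $[0,T]$. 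Lemma \ref{lem:2}-(i) supplies $\sup_{[0,T]}\E[f^2(Y_s)]\le C_T$; combining $x^2\le C(1+f^2(x))$ (from $f(x)\ge cx$ on $\{x\ge 1\}$, Remark \ref{rk1}-(i)) with Cauchy--Schwarz then yields $\sup_{[0,T]}\E[Y_sf(Y_s)]\le C_T$, and the estimate $f'(x)\le C(1+f(x))$ from Assumption \ref{ass:2} controls the $p_t$-integrand.

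For (ii) my plan is to differentiate the defining exponential. Since $\partial_u\kappa_{t-h,u}(x)=-f(\varphi_{t-h,u}(x))\kappa_{t-h,u}(x)$ with $\kappa_{t-h,t-h}(x)=1$, I would write
\[
1-\kappa_{t-h,t}(x)=\int_{t-h}^t f\bigl(\varphi_{t-h,u}(x)\bigr)\kappa_{t-h,u}(x)\,du,
\]
substitute $x=Y_{t-h}$, divide by $h$, and pass to the limit. For the almost sure convergence I would use that $\varphi_{t-h,u}(Y_{t-h})$ lies within $[e^{-\lambda h}Y_{t-h},Y_{t-h}+Ch]$ for $u\in[t-h,t]$ (by local boundedness of $a$ established in (i)), that $\kappa_{t-h,u}(Y_{t-h})\to 1$, and that $Y_{t-h}\to Y_{t-}=Y_t$ a.s.; the last equality holds because each fixed $t$ is a.s.\ not a jump time of $Y$ (its jumps are driven by the Poisson measure $\bN$, so $\P(\Delta Y_t\ne 0)=0$ for every fixed $t$). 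Continuity of $f$ then yields the pointwise limit $f(Y_t)$.

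To lift this a.s.\ limit to expectations, I plan to invoke Vitali's theorem. The crude bound $0\le 1-\kappa_{t-h,t}(x)\le\int_{t-h}^t f(\varphi_{t-h,u}(x))du$, together with $f(\varphi_{t-h,u}(Y_{t-h}))\le f(Y_{t-h}+Ch)\le C(1+f(Y_{t-h}))$ (valid for $h\le 1$ by Remark \ref{rk1}-(ii)), gives the $h$-uniform domination $h^{-1}(1-\kappa_{t-h,t}(Y_{t-h}))\le C(1+f(Y_{t-h}))$. Uniform integrability of this dominating family then follows from $\sup_{s\in[0,T]}\E[f^2(Y_s)]<\infty$ via de la Vall\'ee--Poussin, and Vitali delivers $h^{-1}\E[1-\kappa_{t-h,t}(Y_{t-h})]\to\E[f(Y_t)]=p_t$. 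I expect the main bookkeeping obstacle to be this uniform integrability step; everything else is straightforward and relies only on the already-proved moment estimates from Section \ref{sec:2} and Lemma \ref{lem:2}.
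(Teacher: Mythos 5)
Your argument for (i) follows the paper's route essentially verbatim: take expectations in \eqref{eq:dynlimit} and apply It\^o to $f(Y_t)$ to obtain explicit integral representations of $m_t$ and $p_t$, then bound the integrands via $x+f'(x)\le C(1+f(x))$ and the local boundedness of $\E[f^2(Y_t)]$ (the paper gets this from Remark~\ref{rk1}-(ii) and \eqref{ttl1} rather than citing Lemma~\ref{lem:2}-(i), but this is cosmetic).

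For (ii), however, you take a genuinely different and valid route. The paper works purely deterministically with the target quantity: it splits $|p_t - h^{-1}\E[1-\kappa_{t-h,t}(Y_{t-h})]|$ into three error terms (the continuity defect $|p_t - p_{t-h}|$, the replacement of the flow $\varphi_{t-h,u}$ by the constant $Y_{t-h}$ inside $f$, and the linearization error $|y-(1-e^{-y})|\le y^2$), and bounds each by $C_t h$ or $o(1)$ using the second-moment bound on $f(Y_s)$. You instead exploit the almost sure representation $h^{-1}(1-\kappa_{t-h,t}(Y_{t-h}))=h^{-1}\int_{t-h}^t f(\varphi_{t-h,u}(Y_{t-h}))\kappa_{t-h,u}(Y_{t-h})\,du$, identify the a.s.\ limit $f(Y_t)$ by right-continuity of $Y$ and the fact that a fixed time $t$ is a.s.\ not a Poisson atom, and lift to expectations via the domination $h^{-1}(1-\kappa_{t-h,t}(Y_{t-h}))\le C_t(1+f(Y_{t-h}))$ together with de la Vall\'ee--Poussin and Vitali. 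Your version is shorter and cleaner conceptually (a single limit-in-expectation argument rather than three separate estimates), at the cost of being non-quantitative; the paper's decomposition yields an explicit $O(h)$ bound on $\Delta^2_h+\Delta^3_h$, which is not needed here but reflects its more hands-on style. Both are correct, and your domination bound is exactly right: $\varphi_{t-h,u}(x)\le x+C_t h$ for $h\le 1$ (since $a$ is locally bounded by (i)), whence $f(\varphi_{t-h,u}(x))\le C_t(1+f(x))$ by Remark~\ref{rk1}-(ii), and the second-moment bound on $f(Y_{t-h})$ gives uniform integrability of the dominating family.
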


\begin{proof}
Using Remark \ref{rk1}-(ii) and \eqref{ttl1}, we observe that $t\mapsto \E[f^2(Y_t)]$ is locally bounded.
We now prove (i).
By the It\^o formula, we have $m_t=m_0+\intot \E[(1-Y_s)f(Y_s)]ds$ and 
$p_t=p_0+\intot \E[f'(Y_s)(p_s+\lambda (m_s-Y_s)) - f^2(Y_s)] ds$. 
But under Assumptions \ref{ass:1} and \ref{ass:2}, there is $C>0$ 
such that $x+f'(x) \leq C(1+f(x))$. We easily conclude that 
$s\mapsto \E[(1-Y_s)f(Y_s)]$ and $s\mapsto \E[f'(Y_s)(p_s+\lambda (m_s-Y_s)) - f^2(Y_s)]$ are locally bounded.
The conclusion follows.

\vip

We next fix $t>0$ and prove (ii). We write $|p_t-h^{-1}\E[1-\kappa_{t-h,t}(Y_{t-h})]| \leq \Delta^1_h + \Delta^2_h 
+\Delta^3_h$, where 
\begin{align*}
\Delta^1_h :=& |p_t - p_{t-h}|,\\
\Delta^2_h :=&  h^{-1} \Big|\E\Big[ h f(Y_{t-h}) - \int_{t-h}^t f(\varphi_{t-h,u}(Y_{t-h}))du \Big]\Big|,\\
\Delta^3_h :=&  h^{-1} \Big|\E\Big[\int_{t-h}^t f(\varphi_{t-h,u}(Y_{t-h}))du - (1-\kappa_{t-h,t}(Y_{t-h})) \Big]\Big|.
\end{align*}
First, we have $\lim_{h\downarrow 0} \Delta^1_h =0$ by point (i). Next, we see that for $h\in (0,t\land 1]$, for
$u\in[t-h,t]$ and for $x\geq 0$, it holds that
$\varphi_{t-h,u}(x) \leq x+C_th$ and 
$|x-\varphi_{t-h,u}(x)|\leq C_t (1+x) h$, for some constant $C_t$.
Hence
$|f(x)-f(\varphi_{t-h,u}(x))| \leq (\sup_{[0,x+C_t h)} f' ) \times C_t(1+x)h$.  Using Assumption \ref{ass:2}
and Remark \ref{rk1}-(ii), we get that $|f(x)-f(\varphi_{t-h,u}(x))| \leq C_t (1+f(x))(1+x)h$.
All this implies that $\Delta^2_h  \leq C_t h \E[(1+Y_{t-h})(1+f(Y_{t-h}))] \leq C_t h$
(because, as already seen, $s\mapsto \E[Y_sf(Y_s)]$ is locally bounded), 
which tends to $0$ as $h\downarrow 0$.
Finally, since $|y-(1-\exp(-y))| \leq y^2$ for all $y\geq 0$,
$$
\Delta^3_h \leq h^{-1}\E\Big[ \Big(\int_{t-h}^t f(\varphi_{t-h,u}(Y_{t-h}))du \Big)^2 \Big]
\leq \E\Big[ \int_{t-h}^t f^2(\varphi_{t-h,u}(Y_{t-h}))du \Big].
$$
As previously, we use Remark \ref{rk1}-(ii) to get $f^2(\varphi_{t-h,u}(x))\leq f^2(x+C_th) \leq C_t(1+f^2(x))$
(if $h \in (0,t\land 1]$), whence $\Delta^3_h \leq C_t h \E[1+f^2(Y_{t-h})]\leq C_t h$
(since  $s\mapsto \E[f^2(Y_s)]$ is locally bounded), which tends to $0$ as  $h\downarrow 0$.
This completes the proof.
\end{proof}

We now introduce, for $t\geq 0$, $\tau_t := \sup \{ s\in [0,t] \, : \, \Delta Y_s \ne 0 \}$, 
the last jump instant before $t$. We adopt the convention that $\sup \emptyset = 0$: 
if there is no jump during $[0,t]$, we set $\tau_t=0$. 

\begin{lem}\label{exp} 
Under the above conditions, 

\vip

(i) a.s., for all $t\geq 0$, $Y_t = \varphi_{0,t}(Y_0)\indiq_{ \{ \tau_t=0 \} } 
+ \varphi_{\tau_t,t}(0)\indiq_{ \{ \tau_t>0 \} }$,

\vip

(ii) for all $t>0$, $\P(\tau_t=0 \; \vert\; Y_0)=\kappa_{0,t}(Y_0)$,

\vip

(iii) for all $t>0$, $\P(Y_t=0)<1$.
\end{lem}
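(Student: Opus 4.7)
The whole lemma rests on the piecewise-deterministic structure of \eqref{eq:dynlimit}: the only jumps of $Y$ come from the Poisson integral, and at a jump time $s$ the process is sent from $Y_{s-}$ to $Y_{s-}-Y_{s-}=0$; between jumps, $Y$ satisfies the linear ODE $\dot Y_t = a_t - \lambda Y_t$, whose solution starting from $(s,x)$ is precisely $\varphi_{s,t}(x)$. Point (i) is essentially a direct consequence: on the event $\{\tau_t=0\}$ of no jump on $[0,t]$, integrating the ODE from $0$ gives $Y_t=\varphi_{0,t}(Y_0)$; on $\{\tau_t>0\}$, we have $Y_{\tau_t}=0$ by the reset, and since $(\tau_t,t]$ contains no jump, the ODE gives $Y_t=\varphi_{\tau_t,t}(0)$.

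For (ii), I would condition on $Y_0$ and compute the survival probability of the first jump time $T_1$ of $Y$ using the Poisson random measure $\bN$. On $\{T_1>s\}$ one has $Y_u = \varphi_{0,u}(Y_0)$ for all $u\in[0,s]$, which is a deterministic function of $Y_0$. Hence, conditionally on $Y_0$, $T_1$ coincides with the first point of $\bN$ falling in the deterministic region $\{(u,z):u\in[0,T_1),\ z\leq f(\varphi_{0,u}(Y_0))\}$; by the standard exponential formula for inhomogeneous Poisson first-jump times,
\[
\P(T_1>t\mid Y_0) = \exp\Big(-\int_0^t f(\varphi_{0,s}(Y_0))\,ds\Big)=\kappa_{0,t}(Y_0).
\]
Since $\{\tau_t=0\}=\{T_1>t\}$, this yields (ii). The one subtlety to handle carefully is the self-referential nature of the intensity $f(Y_s)$; the argument works because on $\{T_1>s\}$ the intensity is entirely determined by $Y_0$, which is the only place where any real care is needed.

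For (iii), I would combine (i) and (ii). Since $a_u\geq 0$, one has $\varphi_{0,t}(Y_0)\geq e^{-\lambda t}Y_0$, so on $\{Y_0>0,\tau_t=0\}$, (i) gives $Y_t\geq e^{-\lambda t}Y_0>0$. By (ii),
\[
\P(Y_t>0)\geq \P(Y_0>0,\tau_t=0) = \E\big[\indiq_{\{Y_0>0\}}\kappa_{0,t}(Y_0)\big],
\]
and this is strictly positive: $\kappa_{0,t}(Y_0)>0$ a.s.\ (since $f$ is continuous and $\varphi_{0,\cdot}(Y_0)$ is bounded on $[0,t]$, so the exponent is a.s.\ finite), while $\P(Y_0>0)>0$ by hypothesis. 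Thus $\P(Y_t=0)<1$. No real obstacle arises here beyond invoking the previous two points.
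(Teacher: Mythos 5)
Your proof is correct and follows essentially the same route as the paper: (i) by solving the linear ODE between jumps with the last-jump reset to $0$, (ii) by observing that on the no-jump event the intensity $f(Y_s)=f(\varphi_{0,s}(Y_0))$ becomes a deterministic function of $Y_0$, and (iii) by combining (i) and (ii) with $\varphi_{0,t}(Y_0)\geq e^{-\lambda t}Y_0$ and $\P(Y_0>0)>0$. The ``subtlety'' you flag for (ii) is precisely the point the paper also handles, via the observation that $\tau_t=0$ forces $\tau_s=0$ for all $s\leq t$.
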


\begin{proof}
From \eqref{eq:dynlimit}, we have 
$Y_r = Y_0\indiq_{\{\tau_t=0\}}+ \int_{\tau_t}^r (a_s-\lambda Y_s)ds$ for 
all $t\geq 0$ and all $r\in [\tau_t,t]$. Solving this ODE, we find
$Y_t = e^{ - \lambda t } Y_0 \indiq_{\{ \tau_t = 0 \}} + \int_{\tau_t}^t e^{ - \lambda (t - s ) } a_s ds$, which proves point (i).
But $\tau_t=0$ implies that $\tau_s=0$ for all $s\in [0,t]$, whence $Y_s=\varphi_{0,s}(Y_0)$ on $[0,t]$.
As a consequence, $\{\tau_t=0\}=\{\intot\int_0^\infty \indiq_{\{z\leq f( \varphi_{0,s}(Y_0))\}}
\bN(ds,dz)=0\}$,
so that $\P(\tau_t=0 \; \vert\; Y_0) = \exp(-\intot f( \varphi_{0,s}(Y_0)) ds)$,
as claimed in point (ii). Using that  $Y_t \geq Y_0 e^{-\lambda t}$
on $\{\tau_t=0\}$ and point (ii), we see that
$$
\P(Y_t>0) \geq \P(Y_0>0,\tau_t=0) = \E[\kappa_{0,t}(Y_0)\indiq_{\{ Y_0 >0\}}]>0
$$
since $Y_0>0$ occurs with positive probability. This proves (iii).
\end{proof}

The law of $\tau_t $ is absolutely 
continuous on $ (0, t], $ as shown in the next proposition. This smoothness property will allow us
to show that jumps indeed create a density for $Y_t$.

\begin{prop}\label{loidetau}
Under the above conditions, for all $t>0$, the law  of $\tau_t$ is given by
$h_t(ds)=\E[\kappa_{0,t}(Y_0)] \delta_0 (ds) + p_s\kappa_{s,t}(0)\indiq_{\{0<s<t\}} ds$.
\end{prop}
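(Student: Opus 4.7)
The plan is to compute the cumulative distribution function $s \mapsto \P(\tau_t \leq s)$ on $[0,t]$, extract the atom at $0$ (which is already known from Lemma \ref{exp}), and differentiate to recover the density on $(0,t)$. The target formula suggests that $\P(\tau_t \leq s) = \E[\kappa_{0,t}(Y_0)] + \int_0^s p_u\kappa_{u,t}(0)du$ for $s\in[0,t]$, and this is what I would aim to establish.

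The first step is to observe that $\{\tau_t\leq s\}$ coincides with the event $\{\text{no jump of } Y \text{ occurs in } (s,t]\}$. Conditioning on $\mathcal{F}_s$ and repeating verbatim the argument of Lemma \ref{exp}(ii), but now starting from time $s$ with value $Y_s$ instead of from $0$ with value $Y_0$, gives $\P(\text{no jump in }(s,t]\mid \mathcal{F}_s) = \kappa_{s,t}(Y_s)$, since conditional on no further jump the process follows the deterministic flow $\varphi_{s,\cdot}(Y_s)$ and the jump rate at time $u$ is $f(\varphi_{s,u}(Y_s))$. Hence $\P(\tau_t \leq s) = \E[\kappa_{s,t}(Y_s)]$; in particular $\P(\tau_t = 0) = \E[\kappa_{0,t}(Y_0)]$, which produces the atom at $0$.

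It remains to differentiate $F(s) := \E[\kappa_{s,t}(Y_s)]$. I would apply It\^o's formula to $\Psi(s,y) := \kappa_{s,t}(y)$ along $(Y_s)_{s\geq 0}$, which solves $dY_s = (a_s - \lambda Y_s)ds - Y_{s-}dN_s$ with $N$ the jump-counting process of $Y$. The key computation, which I expect to be the main obstacle, is the deterministic identity
\begin{equation*}
\partial_s \Psi(s,y) + (a_s - \lambda y)\partial_y\Psi(s,y) = f(y)\Psi(s,y).
\end{equation*}
This follows by directly differentiating $\Psi(s,y) = \exp(-\int_s^t f(\varphi_{s,u}(y))du)$ using $\varphi_{s,s}(y)=y$, $\partial_y \varphi_{s,u}(y) = e^{-\lambda(u-s)}$ and $\partial_s \varphi_{s,u}(y) = e^{-\lambda(u-s)}(\lambda y - a_s)$, or equivalently by viewing $\Psi$ as the solution of a backward linear transport equation associated with the flow $\varphi$. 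Granted this identity, the continuous-variation part of $\Psi(s,Y_s)$ contributes exactly $\int_0^s f(Y_u)\Psi(u,Y_u)du$, while each jump contributes $\Psi(u,0) - \Psi(u,Y_{u-})$. The compensated jump-integral is a true martingale thanks to $|\Psi|\leq 1$ and the bound $\sup_{[0,t]}\E[f(Y_u)]<\infty$ supplied by Lemma \ref{lem:2}(i) together with Remark \ref{rk1}, so taking expectations and noting that the $-\Psi(u,Y_{u-})f(Y_{u-})du$ term produced by the compensator cancels the drift contribution yields
\begin{equation*}
F(s) = F(0) + \int_0^s \Psi(u,0)\E[f(Y_u)]du = \E[\kappa_{0,t}(Y_0)] + \int_0^s p_u \kappa_{u,t}(0)du.
\end{equation*}
Since $u\mapsto p_u\kappa_{u,t}(0)$ is continuous on $(0,t)$ by Lemma \ref{cont}(i), differentiation identifies the density on $(0,t)$ as $p_s\kappa_{s,t}(0)$, and combined with the atom $\E[\kappa_{0,t}(Y_0)]\delta_0$ this yields the claimed formula; the total mass check $\E[\kappa_{0,t}(Y_0)] + \int_0^t p_u\kappa_{u,t}(0)du = 1$ then follows automatically from $\P(\tau_t\leq t)=1$.
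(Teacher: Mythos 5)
Your proof is correct but takes a genuinely different route from the paper at the key step. Both arguments share the central identity $\P(\text{no jump of }Y\text{ in }(s,t]\mid \cF_s)=\kappa_{s,t}(Y_s)$, established by the same reasoning (on the no-jump event, $Y$ follows the deterministic flow $\varphi_{s,\cdot}(Y_s)$, and the Poisson measure on $(s,t]$ is independent of $\cF_s$). The divergence is in how the density on $(0,t)$ is then extracted. The paper estimates $\P(\tau_t\in(s-h,s])=\E[\kappa_{s,t}(Y_s)\indiq_{\{J_s>J_{s-h}\}}]$ directly and takes $h\downarrow 0$, which requires controlling $\kappa_{s,t}(Y_s)-\kappa_{s,t}(0)$ on the event $\{J_s>J_{s-h}\}$ (where $Y_s\in[0,\varphi_{s-h,s}(0)]$) and invoking the auxiliary Lemma \ref{cont}-(ii) to obtain $\lim_h h^{-1}\P(J_s>J_{s-h})=p_s$. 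You instead write the whole CDF as $F(s)=\E[\kappa_{s,t}(Y_s)]$, verify the transport identity $\partial_s\Psi+(a_s-\lambda y)\partial_y\Psi=f(y)\Psi$ for $\Psi(s,y)=\kappa_{s,t}(y)$ (which I checked: your two formulas for $\partial_s\varphi$ and $\partial_y\varphi$ are correct, and the terms cancel exactly), and then one application of the It\^o formula gives $F(s)=F(0)+\int_0^s p_u\kappa_{u,t}(0)du$ in closed form. This bypasses Lemma \ref{cont}-(ii) entirely; what it costs is the regularity and integrability bookkeeping for It\^o with a time-dependent test function (but $|\Psi|\le 1$, $Y$ has finite activity, and the compensated integral is a martingale since $\sup_{[0,t]}\E[f(Y_u)]<\infty$, so this goes through). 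One small citation slip: the moment bound $\sup_{[0,t]}\E[f(Y_u)]<\infty$ should not be attributed to Lemma \ref{lem:2}-(i), which sits in Section 5 and is stated under the extra Assumption \ref{ass:3}; the correct source in the present setting is \eqref{ttl1} together with Remark \ref{rk1}-(ii), as used at the start of the proof of Lemma \ref{cont}.
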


\begin{proof}
First, $\P(\tau_t=0)=\E[\kappa_{0,t}(Y_0)]$ as desired by Lemma \ref{exp}-(ii). 
We next introduce the filtration $\cF_s=\sigma(\{Y_0,\bN([0,r]\times A) \, : \, r \in [0,s], 
A \in \cB([0,\infty))\} )$ and the process
$J_s=\sum_{r\in[0,s]} \indiq_{\{\Delta Y_r \ne 0\}} 
= \int_0^s \int_0^\infty \indiq_{\{z\leq f( Y_{r-})\}} \bN(dr,dz)$ which counts the number of jumps of $Y$.
We consider $0<s-h<s<t$ and observe that $\{\tau_t \in (s-h,s]\}=\{J_s>J_{s-h}\} \cap \{J_t=J_s\}$.
The event $\{J_s>J_{s-h}\}$ is $\cF_s$-measurable. When $Y$ does not jump during $(s,t]$,
$Y_{r-}=Y_r=\varphi_{s,r} (Y_s)$ for all $r\in (s,t]$: this follows from 
Lemma \ref{exp}-(i), from the semi-group property of the flow
$\varphi$, and from the fact that $\tau_r=\tau_s=\tau_t$ when  $Y$ does not jump during $(s,t]$.
Consequently,
$\{J_t=J_s\}=\{\int_{s}^t \int_0^\infty \indiq_{\{z\leq f( \varphi_{s,r} (Y_s))\}} \bN(dr,dz)=0\}$, whence
$$
\P( J_t=J_s \; \vert \; \cF_s) = \P \Big(\int_{s}^t \int_0^\infty \indiq_{\{z\leq f( \varphi_{s,r} (Y_s))\}} \bN(dr,dz)=0
\; \Big\vert \; \cF_s \Big)= \kappa_{s,t}(Y_s).
$$
We conclude that $\P( \tau_t \in (s-h,s]) = \E [\kappa_{s,t}(Y_s) \indiq_{\{J_s>J_{s-h}\}}]$.
On the event $\{J_s>J_{s-h}\}$, the process $Y$ jumps (at least once) to $0$ during $(s-h,s]$, so that
$Y_s \in [0, \varphi_{s-h,s}(0)]$ by Lemma \ref{exp}-(i). Hence, 
$$
|\P( \tau_t \in (s-h,s]) - \E [\kappa_{s,t}(0) \indiq_{\{J_s>J_{s-h}\}}]| 
\leq \sup_{x \in[0,\varphi_{s-h,s}(0)]}|\kappa_{s,t}(x)-\kappa_{s,t}(0)| \times \E[J_s-J_{s-h}].
$$
Using that $\E[J_s-J_{s-h}]=\int_{s-h}^s p_rdr \leq C h$ (by Lemma \ref{cont}-(i)), that  $\varphi_{s-h,s}(0)\to 0$
as $h\to 0$, and the (obvious) continuity of $x\mapsto \kappa_{s,t}(x)$, we conclude that
\begin{equation}\label{jab1}
\limsup_{h\to 0} \frac 1 h\Big|\P( \tau_t \in (s-h,s]) - \kappa_{s,t}(0) \P(J_s>J_{s-h})\Big| =0.
\end{equation}
Next, arguing exactly as in Lemma \ref{exp}-(ii), 
we get $\P( J_s>J_{s-h})=1-\E[\kappa_{s-h,s}(Y_{s-h})]$. Hence, we deduce from Lemma \ref{cont}-(ii) that
\begin{equation}\label{jab2}
\lim_{h\to 0} \frac 1 h \P( J_s>J_{s-h} )  = \E[f(Y_s)]=p_s.
\end{equation}
Gathering \eqref{jab1} and \eqref{jab2}, we deduce that indeed, the density of the law of
$\tau_t$ at point $s\in (0,t)$ exists and equals  $p_s \kappa_{s,t}(0)$.
\end{proof}

We are now able to give the

\vip

\begin{proof}[Proof of Theorem \ref{theo:limitdensity}.]
We have already seen that $t\mapsto a_t$ and $t\mapsto p_t$ are continuous (by Lemma \ref{cont}-(i))
and positive (by Lemma \ref{exp}-(iii)).
We now fix $t>0$. By Lemma \ref{exp}-(i),
$Y_t = \varphi_{0,t}(Y_0)\indiq_{ \{ \tau_t=0 \} } + \varphi_{\tau_t,t}(0)\indiq_{ \{ \tau_t>0 \}}$.
Hence for any bounded measurable $\phi: [0,\infty)\mapsto \R$,
\begin{align*}
\E [ \phi (Y_t) ] =& \E[ \phi(\varphi_{\tau_t,t}(0))\indiq_{ \{ \tau_t>0 \} }] + 
\E[\phi(\varphi_{0,t}(Y_0))\indiq_{ \{ \tau_t=0 \} }]=:A(\phi)+B(\phi).
\end{align*}
Clearly, $\varphi_{\tau_t,t}(0)<\varphi_{0,t}(0)$ when $\tau_t>0$ and $\varphi_{0,t}(Y_0)\geq \varphi_{0,t}(0)$.
Using Proposition \ref{loidetau}, we can write
$$
A(\phi)= \int_0^t   \phi ( \varphi_{ {s , t}} (0) ) p_s\kappa_{s,t}(0) ds.
$$
Recall that for $y \in [0,\varphi_{0,t}(0)]$, $\beta_t(y)\in[0,t]$ is uniquely defined
by $\varphi_{\beta_t(y),t}(0)=y$. The change of variables $s\mapsto y=\varphi_{ {s , t}} (0)$, for which
$dy = - e^{ - \lambda (t-s) } a_{s} ds$ and $s=\beta_t(y)$, gives us
$$
A(\phi)  =  \int_0^{ \varphi_{0, t } ( 0) } \phi ( y ) \frac{p_{\beta_t(y)}}{a_{\beta_t(y)}}\kappa_{\beta_t(y),t}(0) 
e^{\lambda(t-\beta_t(y))} dy.
$$
Next recall that $\gamma_t(y)=(y-\varphi_{0,t}(0))e^{\lambda t}$ for 
$y \geq \varphi_{0,t}(0)$. Using Lemma \ref{exp}-(ii) and the change of variables $x\mapsto y=\varphi_{0,t}(x)$,
for which $\gamma_t(y)=x$, we find
$$
B(\phi) = \E[\phi(\varphi_{0,t}(Y_0))\kappa_{0,t}(Y_0)]= \int_0^\infty \phi(\varphi_{0,t}(x))\kappa_{0,t}(x)g_0(dx)
=\int_{\varphi_{0,t}(0)}^\infty \phi(y) \kappa_{0,t}(\gamma_t(y)) (g_0 \circ \gamma_t^{-1})(dy).
$$
We have proved that 
$$
\E [ \phi (Y_t) ]= \int_0^\infty  \phi ( y ) \Big[\frac{p_{\beta_t(y)}}{a_{\beta_t(y)}}\kappa_{\beta_t(y),t}(0) 
e^{\lambda(t-\beta_t(y))}\indiq_{\{y < \varphi_{0,t}(0)\}} dy + 
\kappa_{0,t}(\gamma_t(y)) \indiq_{\{y\geq \varphi_{0,t}(0)\}} (g_0 \circ \gamma_t^{-1})(dy)   \Big].
$$
Replacing $\kappa$ by its expression, one finds the formula claimed in the statement.
\end{proof}

\subsection{Non extinction}

We first consider the easy case where $\lambda=0$.

\begin{proof}[Proof of Proposition \ref{paszero} when $\lambda=0$.]
Using Theorem \ref{theo:limitdensity}, we see that the law
of $Y_t$ has a density bounded by $1$, on $[0,\varphi_{0,t}(0))$ (because
for all $x\in[0,\varphi_{0,t}(0))$, there is $s\in(0,t]$ such that $\varphi_{s,t}(0)=x$).
For all $t> 1$, $\varphi_{0,t}(0)=\intot a_s ds > \int_0^1 a_s ds =:\alpha>0$ 
by Lemma \ref{exp}-(iii).
Hence, for $\e:= \min\{\alpha,1/2\}$, it holds that $\Pr(Y_t \leq \e) \leq \e\leq 1/2$ for all $t>1$.
Consequently, $Y_t$ cannot tend to $0$ in probability as $t\to \infty$.
\end{proof}

To study the case where $\lambda>0$, we need the following lemma.

\begin{lem}\label{estap2}
Grant Assumptions \ref{ass:1}, \ref{ass:2}, \ref{ass:4}-(i) and suppose that $\lambda \geq 0$.
Assume that $\E[f(Y_0)]<\infty$ and consider the unique solution $(Y_t)_{t\geq 0}$ to
\eqref{eq:dynlimit} built in Theorem \ref{theo:2}-(iii).
There is $q>1$ such that
$\sup_{t\geq t_0} \E[f^q(Y_t)] <\infty$ for all $t_0>0$.
\end{lem}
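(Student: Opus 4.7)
The plan is to reduce the statement to proving that the deterministic drift $a_t=\lambda m_t+p_t$, where $m_t:=\E[Y_t]$ and $p_t:=\E[f(Y_t)]$, is uniformly bounded in $t$; once this is done, a standard It\^o-plus-dissipation argument for $M_q(t):=\E[f^q(Y_t)]$ finishes the job. The bound on $a_t$ is obtained through a coupled dissipative ODE system for $(m_t,p_t)$ that critically uses Assumption \ref{ass:4}-(i).

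First I would establish the coupled ODEs. It\^o's formula applied to $Y_t$ and to $f(Y_t)$ and taking expectations gives $m_t'=p_t-\E[Y_tf(Y_t)]$ and $p_t'=\E[f'(Y_t)(a_t-\lambda Y_t)]-\E[f^2(Y_t)]$, both valid for $t>0$ once one knows $M_2(t)<\infty$ (see next paragraph). Remark \ref{rk1}-(i) gives $yf(y)\geq cy^2-c$, so by Jensen $m_t'\leq p_t-cm_t^2+c$. From Assumption \ref{ass:4}-(i) one finds $\alpha<1$ and $C>0$ such that $f'(y)\leq \alpha f(y)+C$ for all $y\geq 0$; dropping the nonpositive term $-\lambda\E[Y_tf'(Y_t)]$ and using Jensen ($\E[f^2(Y_t)]\geq p_t^2$) yields
\[
p_t'\leq -(1-\alpha)p_t^2+\alpha\lambda m_tp_t+C(1+m_t+p_t).
\]
A single Young step $\alpha\lambda m_tp_t\leq \tfrac{1-\alpha}{2}p_t^2+\tfrac{(\alpha\lambda)^2}{2(1-\alpha)}m_t^2$ then gives $p_t'\leq -\tfrac{1-\alpha}{2}p_t^2+Cm_t^2+C(1+m_t+p_t)$. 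For $\e>0$ small enough that $\e(\alpha\lambda)^2/(2(1-\alpha))\leq c/2$, the quantity $\Phi_t:=m_t+\e p_t$ satisfies $\Phi_t'\leq -\kappa \Phi_t^2+C'$ for some $\kappa,C'>0$ (after absorbing the lower-order linear terms into the quadratic ones), and by comparison $\sup_{t\geq t_0}\Phi_t<\infty$ for every $t_0>0$. Hence $A:=\sup_{t\geq t_0}a_t<\infty$.

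Next I would check that $M_q(t_0)<\infty$ for every $q>1$ and every $t_0>0$, which is needed both to initialise the bootstrap and to justify the ODE manipulations above (with $q=2$). Theorem \ref{theo:limitdensity} splits $\E[f^q(Y_t)]$ into a ``jump part'' supported on the bounded interval $[0,\varphi_{0,t}(0))$, which is trivially finite, and the ``no-jump part'' $\E[f^q(\varphi_{0,t}(Y_0))\kappa_{0,t}(Y_0)]$. Since $\varphi_{0,s}(y)\geq e^{-\lambda s}y\geq e^{-\lambda t}y$ for $s\in[0,t]$ and $f$ is non-decreasing, $\kappa_{0,t}(y)\leq\exp(-tf(e^{-\lambda t}y))$; combining with Remark \ref{rk1}-(ii) to bound $f^q(\varphi_{0,t}(y))\leq C_t(1+f^q(e^{-\lambda t}y))$ and setting $u=e^{-\lambda t}y$ gives
\[
f^q(\varphi_{0,t}(y))\kappa_{0,t}(y)\leq C_t\sup_{w\geq 0}(1+w^q)e^{-tw}<\infty,
\]
uniformly in $y$. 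Hence $M_q(t_0)<\infty$ for any $t_0>0$ and any $q>1$.

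Finally I would bootstrap. Fix any $q>1$. Applying It\^o to $f^q(Y_t)$ and taking expectations produces
\[
M_q'(t)=q\E[f^{q-1}(Y_t)f'(Y_t)(a_t-\lambda Y_t)]-\E[f^{q+1}(Y_t)].
\]
Dropping the nonpositive term $-q\lambda\E[Y_tf^{q-1}f'(Y_t)]$, using $f^{q-1}(y)f'(y)\leq \alpha f^q(y)+C$ (consequence of $f'\leq \alpha f+C$ together with boundedness on compacts), Step~1 ($a_t\leq A$), and Jensen ($\E[f^{q+1}(Y_t)]\geq M_q(t)^{(q+1)/q}$), one gets
\[
M_q'(t)\leq qA(\alpha M_q(t)+C)-M_q(t)^{(q+1)/q}.
\]
Since $(q+1)/q>1$, the right-hand side is strictly negative once $M_q(t)$ exceeds some universal threshold $M_*$, so $M_q(t)\leq\max(M_q(t_0),M_*)<\infty$ for every $t\geq t_0$. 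The main obstacle is Step~1: the strict inequality $\alpha<1$ in Assumption \ref{ass:4}-(i) (rather than mere boundedness of $f'/f$) is exactly what makes the $-(1-\alpha)p_t^2$ dissipation large enough to absorb the nonlinear cross term $\alpha\lambda m_tp_t$ after a single Young step, yielding the clean scalar dissipative ODE for $\Phi_t$.
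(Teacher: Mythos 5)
Your proof takes a genuinely different route from the paper's. The paper proves, directly and in one shot, the closed dissipative inequality
\[
\frac{d}{dt}\E[f^q(Y_t)]\le -\E\big[f^{q+1}(Y_t)-(f^q)'(Y_t)f(Y_t)\big]\le C-c\,\E[f^q(Y_t)]^{1+1/q}
\]
for $q>1$ chosen so that $q\,\limsup_{x\to\infty}f'(x)/f(x)<1$. The key is not a bound on $a_t$: both drift terms are handled instantaneously by positive-correlation of monotone functions (the inequality $\E[\varphi_1(X)]\E[\varphi_2(X)]\le\E[\varphi_1(X)\varphi_2(X)]$ applied to the increasing functions $(f^q)'$ and $f$, and the fact that $\E[(f^q)'(Y_t)(Y_t-\E[Y_t])]\ge0$). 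You instead first establish a uniform-in-time bound on $a_t=\lambda m_t+p_t$ via the coupled Riccati system for $(m_t,p_t)$, and only then bootstrap the dissipation for $M_q$; your route works for any $q>1$, whereas the paper fixes a $q$ close to $1$, but yours costs an extra layer of ODE analysis and needs its own a priori bound before the bootstrap. Your identification of $\alpha<1$ as the crucial feature of Assumption \ref{ass:4}-(i) is correct, and the algebra in the Young/comparison step checks out.

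There is, however, a genuine gap at the point where you justify the Itô/ODE manipulations. To write $p_t'=\E[f'(Y_t)(a_t-\lambda Y_t)]-\E[f^2(Y_t)]$ as an identity between finite quantities you need $\E[f^2(Y_t)]<\infty$, and your Step~2 is meant to supply this. But Step~2 invokes Theorem~\ref{theo:limitdensity}, which is proved (via Lemma~\ref{cont} and Proposition~\ref{loidetau}) only under the extra hypotheses $\E[f^2(Y_0)]<\infty$ and $\P(Y_0=0)<1$; the former is strictly stronger than the hypothesis $\E[f(Y_0)]<\infty$ of the present lemma. As written, Step~2 is therefore circular/out of scope. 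The paper sidesteps this exactly by the device you omitted: first assume $Y_0$ compactly supported (so $Y_t$ is a.s.\ bounded on compacts by \eqref{ttl1} and every expectation is trivially finite), derive a bound $\E[f^q(Y_t)]\le K(1+t^{-q})$ with $K$ independent of $\E[f^q(Y_0)]$ by comparison with the solution of $u'=C-cu^{1+1/q}$ blowing up at $t=0^+$, and then remove the compact support assumption by truncating $Y_0^A=Y_0\wedge A$ and passing to the limit using the stability estimate \eqref{stab} of Proposition~\ref{unipascomp}. Your approach can be repaired the same way — run Steps~1 and~3 for $Y_0^A$, check that the resulting bound is of the form $K(1+t^{-q})$ with $K$ controlled by $\E[Y_0]+\E[f(Y_0)]$ only, and pass to the limit — but that truncation is an essential part of the proof and must be made explicit; your Step~2 cannot replace it under the stated hypotheses.
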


\begin{proof} 
By Assumption \ref{ass:4}-(i), there exists $q>1$ such that $q \limsup_{x \to \infty } [f' (x) / f(x)] < 1$. 
We now divide the proof into two steps.

\vip

{\it Step 1.} Here we assume that $Y_0$ has a bounded support, so that $Y_t$ 
is uniformly bounded on each compact time interval by \eqref{ttl1}.
This ensures that all the computations below are licit.
Applying the It\^o formula for jump processes and taking expectations, we easily check that
\begin{align*}
\frac{d}{dt}\E[f^q(Y_t)] =& -\lambda \E[(f^q)'(Y_t)(Y_t-\E[Y_t])] - \E[f^{q+1}(Y_t)] + 
\E[(f^q)'(Y_t)]\E[f(Y_t)].
\end{align*}
Now, as shown e.g. in \cite{sc}, it holds that $\E[\varphi_1(X)]\E[\varphi_2(X)]\leq \E[\varphi_1(X)\varphi_2(X)]$
for any $[0,\infty)$-valued random variable $X$ and any pair of non-decreasing functions $\varphi_1,\varphi_2:
[0,\infty)\mapsto \R$. Using that $f^q$ is convex (since $q>1$ and by Assumption \ref{ass:2}), we deduce
that $\E[(f^q)'(Y_t)(Y_t-\E[Y_t])]\geq 0$ and that $\E[(f^q)'(Y_t)]\E[f(Y_t)] \leq \E[(f^q)'(Y_t) f(Y_t)]$.
We thus have
\begin{align*}
\frac{d}{dt}\E[f^q(Y_t)] \leq & - \E[f^{q+1}(Y_t) - (f^q)'(Y_t) f(Y_t)].
\end{align*}
Recalling now that $q \limsup_{x \to \infty } [f' (x) / f(x)] < 1$, we can find two constants
$c>0$ and $C\geq 0$ such that $f^{q+1}(x) - (f^q)'(x) f(x) \geq c f^{q+1}(x)-C$ for all $x\geq 0$. 
Indeed, find $x_0>0$ such that
$a:=\sup_{x\geq x_0} [f' (x) / f(x)] < 1/q$, observe that $f^{q+1}(x) - (f^q)'(x) f(x)
=f^{q+1}(x) - q f'(x) f^q(x) \geq (1-aq)f^{q+1}(x)$ for $x\geq x_0$, and conclude by setting $c=(1-aq)>0$
and $C=\sup_{[0,x_0]} (f^q)'(x) f(x)$. We deduce that 
\begin{align*}
\frac{d}{dt}\E[f^q(Y_t)] \leq & C-c \E[f^{q+1}(Y_t)] \leq C -c\E[f^{q}(Y_t)]^{1+1/q}.
\end{align*}
The conclusion classically follows: there is a constant $K$, not depending on $\E[f^q(Y_0)]$
such that $\E[f^q(Y_t)] \leq K(1+t^{-q})$.

\vip

{\it Step 2.} We next only assume that $\E[f(Y_0)]<\infty$. We introduce $Y_0^A=\min\{Y_0,A\}$
and the unique solution $(Y_t^A)_{t\geq 0}$ to \eqref{eq:dynlimit} starting from $Y_0^A$.
By Step 1, we know that for all $t\geq 0$, uniformly in $A$,  $\E[f^q(Y_t^A)] \leq K(1+t^{-q})$.
But we also know by Proposition \ref{unipascomp} that for each $t\geq 0$,
$Y_t^A$ goes in law to $Y_t$ as $A\to \infty$ (we have to verify that
$\E[|H(Y_0^A)-H(Y_0)|]\to 0$, which is not difficult by dominated convergence since $\E[H(Y_0)]
\leq \pi/2 + \E[f(Y_0)]<\infty$ by assumption). The conclusion follows.
\end{proof}

\begin{proof}[Proof of Proposition \ref{paszero} when $\lambda>0$.]
We work by contradiction and assume that $Y_t$ goes in law (and thus in probability) to $0$ as $t\to\infty$.
By Lemma \ref{estap2}, we know that there is $q >1$ such that $\sup_{t\geq 1} \E[f^{q}(Y_t)] <\infty$.
This implies that $\sup_{t\geq 1} \E[Y_t^{q}] <\infty$ by Remark \ref{rk1}-(i).
Consequently,
$Y_t$ and $f(Y_t)$ are uniformly integrable (for $t\geq 1$), so that the Lebesgue theorem tells us,
since $Y_t$ goes in probability to $0$, that $a_t=\lambda\E[Y_t] + E[f(Y_t)]$ tends to $0$.

\vip

We use Lemma \ref{exp}-(i) to write $Y_t= e^{ - \lambda t } Y_0 \indiq_{\{ \tau_t = 0 \}} + \varphi_{\tau_t,t}(0)$.
First, there is $t_0>0$ such that 
\begin{equation}\label{majo}
\hbox{for all $t\geq t_0$,} \quad \varphi_{\tau_t,t}(0) \leq \varphi_{0,t}(0) \leq 1/2.
\end{equation}
Indeed, we consider $t_1>0$ such that 
$a_t \le \lambda/4$ for all $t\geq t_1$. Then we see that $\varphi_{\tau_t,t}(0) \leq \varphi_{0,t}(0)
\leq e^{-\lambda t}\int_0^{t_1} e^{\lambda u}a_u du + (\lambda/4)\int_{t_1}^t e^{-\lambda (t-u)} du \leq
C e^{-\lambda t} + 1/4$, whence the conclusion.

\vip

Taking expectations in \eqref{eq:dynlimit}, we see that
$(d/dt) \E [Y_t]=\E[(1-Y_t)f(Y_t)]$. But for $t\geq t_0$ and on the event 
$\{\tau_t>0\}$, we have $Y_t=\varphi_{\tau_t,t}(0) \leq 1/2$ and thus $(1-Y_t)f(Y_t)\geq 0$.
Next on the event $\{\tau_t=0\}$, we have $Y_t=\varphi_{0,t}(Y_0)$.
Consequently,
$$
\frac{d}{dt} \E [Y_t] \geq \E\big[(1-\varphi_{0,t}(Y_0))
f(\varphi_{0,t}(Y_0))\indiq_{\{\tau_t=0\}}\big].
$$
Hence, by Lemma \ref{exp}-(ii), 
\begin{align*}
\frac{d}{dt} \E [Y_t] \geq \E[(1-\varphi_{0,t}(Y_0))
f(\varphi_{0,t}(Y_0)) \kappa_{0,t}(Y_0)] \geq  I_t-J_t,
\end{align*}
where
\begin{align*}
I_t:= & \E[(1-\varphi_{0,t}(Y_0)) f(\varphi_{0,t}(Y_0)) \kappa_{0,t}(Y_0) \indiq_{\{\varphi_{0,t}(Y_0)<3/4\}}],\\
J_t:= & \E[(\varphi_{0,t}(Y_0)-1) f(\varphi_{0,t}(Y_0)) \kappa_{0,t}(Y_0) \indiq_{\{\varphi_{0,t}(Y_0)>1\}}].
\end{align*}
We will now prove that there is $t_2>t_0$ such that  $I_t > J_t$ for all $t\geq t_2$.
This will end the proof, since then $(d/dt)\E[Y_t]> 0$ for all $t\geq t_2$, so that 
$\E[Y_t]$, and thus {\it a fortiori} $a_t$, cannot go to $0$. 
To prove that $I_t$ is eventually greater than $J_t$, we will check that, with 
$\xi\geq 1$ defined in Assumption \ref{ass:4}-(ii),
\begin{align*}
\hbox{(a)}\;\; \liminf_{t\to \infty}  \frac{e^{\lambda \xi t}}{\kappa_{0,t}(0)}I_t >0,\quad 
\hbox{(b)} \;\; \limsup_{t\to \infty}  \frac{e^{\lambda \xi t}}{\kappa_{0,t}(0)}J_t =0 .
\end{align*}

Let us first prove (b). It holds that $\kappa_{0,t}(Y_0)/\kappa_{0,t}(0) \leq 1$.
Next, recalling that $\varphi_{0,t}(Y_0)=e^{-\lambda t}Y_0 + \varphi_{0,t}(0)\leq e^{-\lambda t}Y_0+1/2$ for $t\geq t_0$ by 
\eqref{majo}, we see that $\indiq_{\{\varphi_{0,t}(Y_0)>1\}}\leq \indiq_{\{e^{-\lambda t}Y_0 >1/2\}}$.
On the set $e^{-\lambda t}Y_0 >1/2$, it holds that $\varphi_{0,t}(Y_0) \leq e^{-\lambda t}Y_0+1/2 \leq 2 e^{-\lambda t}Y_0$.
Finally, by Assumption \ref{ass:4}-(ii), we write that $e^{\lambda \xi t} (\varphi_{0,t}(Y_0)-1) f(\varphi_{0,t}(Y_0))
\leq C e^{\lambda \xi t}[ (2e^{-\lambda t}Y_0)^\xi + (2e^{-\lambda t}Y_0)^{\zeta+1}] \leq C [Y_0^\xi+Y_0^{\zeta+1}]
\leq C(1+Y_0^{\zeta+1})$ because $\zeta \geq \xi-1$ by assumption. All in all, we have checked that for $t\geq t_0$,
$$
\frac{e^{\lambda \xi t}}{\kappa_{0,t}(0)}J_t \leq C \E[(1+Y_0^{\zeta+1})\indiq_{\{Y_0 >e^{\lambda t}/2\}}].
$$
Since $\E[Y_0^{\zeta+1}]<\infty$ by assumption, the monotone convergence theorem shows the validity of (b).

\vip

We finally prove (a). We have $\varphi_{0,t}(Y_0)\geq e^{-\lambda t}Y_0$.
Hence by Assumption \ref{ass:4}-(ii), we may write, on the event $\{\varphi_{0,t}(Y_0)<3/4\}$,
that $(1-\varphi_{0,t}(Y_0)) f(\varphi_{0,t}(Y_0)) \geq c e^{-\lambda \xi t}Y_0^\xi$.
We next recall that, as previously, for $t\geq t_0$,
$\varphi_{0,t}(Y_0)\leq e^{-\lambda t}Y_0+1/2$. Consequently,
$\indiq_{\{\varphi_{0,t}(Y_0)<3/4\}}\geq \indiq_{\{e^{-\lambda t}Y_0 <1/4\}}$.
Finally, 
\begin{align*}
\frac{\kappa_{0,t}(Y_0)}{\kappa_{0,t}(0)}=& \exp\Big(-\intot  [f(\varphi_{0,s}(Y_0)) -f(\varphi_{0,s}(0))] ds   \Big)\\
\geq &  \exp\Big(-\intot e^{-\lambda s} Y_0 \times\big( \sup_{[0,\varphi_{0,s}(Y_0)]} f' \big) ds \Big),
\end{align*}
because $\varphi_{0,s}(Y_0)=e^{-\lambda s} Y_0+\varphi_{0,s}(0)$.
But $\sup_{s \geq 0}\varphi_{0,s}(Y_0) \leq Y_0 +C$: this follows from the expression of $\varphi$
and the fact that $t\mapsto a_t$ is a 
bounded function, since it is locally bounded and tends to $0$. Hence, using Assumptions \ref{ass:1}, 
\ref{ass:2} and Remark \ref{rk1}-(ii), 
$\sup_{[0,\varphi_{0,s}(Y_0)]} f' \leq C (1+f(Y_0))$, and we end with
\begin{align*}
\frac{\kappa_{0,t}(Y_0)}{\kappa_{0,t}(0)}
\geq &  \exp\big(- C Y_0(1+f(Y_0)) \big).
\end{align*}
All in all, we see that for $t\geq t_0$,
$$
\frac{e^{\lambda \xi t}}{\kappa_{0,t}(0)}I_t \geq c 
\E\big[Y_0^\xi\exp\big(- C Y_0(1+f(Y_0)) \big)\indiq_{\{Y_0 < e^{\lambda t}/4\}}\big].
$$
This last quantity tends, by monotone convergence, to 
$c \E[Y_0^\xi\exp\big(- C Y_0(1+f(Y_0)) \big)]>0$.  We have checked (a).
\end{proof}

\subsection{Trend to equilibrium when $\lambda=0$}\label{xxx}
This final part is dedicated to the proof of Proposition \ref{formal}.
We thus work under all the assumptions above and suppose furthermore that
$\lambda=0$ and that the initial condition $g_0$ has a density $g_0\in C^1_b([0,\infty))$ satisfying
$g_0(0)=1$ and $\int_0^\infty |g_0'(y)|dy <\infty.$

\vip

Since $\lambda=0$, we simply have $\varphi_{s,t}(x)=x+A_t-A_s$, where $A_t=\intot a_s ds$.
For $t\geq 0$ and $y\in [0,A_t]$, $\beta_t(y) \in [0,t]$ is defined by $A_t-A_{\beta_t(y)}=y$.
And for $t\geq 0$ and $y\geq A_t$, $\gamma_t(y)=y-A_t$.
We thus know from Theorem \ref{theo:limitdensity} that $g(t)$ has a density on $[0,\infty)$
given by
\begin{equation}\label{tacc}
g(t,y)=\kappa_{\beta_t(y),t}(0) \indiq_{\{y < A_t\}} +  \kappa_{0,t}(y-A_t) g_0(y-A_t)\indiq_{\{y\geq A_t\}}.
\end{equation}
Observe that $g(t,A_t-)=g(t,A_t+)=\kappa_{0,t}(0)$. A little study, using 
our assumptions on $g_0$ and that the map $t\mapsto a_t$ is continuous and positive, shows that
$g(t,y)$ is continuous on $[0,\infty)^2$, of class $C^1$ on
$\{(t,y)\in [0,\infty)^2\; : \;  y \ne A_t\}$ and that
$\sup_{[0,T]} (||\partial_y g(t,.)||_{L^\infty(\R)}+ ||\partial_y g(t,.)||_{L^1(\R)}) <\infty$ for all $T>0$.

\vip

Let $\phi \in C^1_b([0,\infty))$. Applying the It\^o formula to compute $\phi(Y_t)$, taking expectations
and differentiating the obtained formula, we find that 
$$
\frac{d}{dt} \int_0^\infty \phi (x) g(t, x) dx = \int_0^\infty [\phi(0)-\phi(x) +p_t \phi'(x)]g(t,x)dx=
- \int_0^\infty \phi( x) \left[  f(x)  g(t, x) + p_t \partial_x g(t, x)   \right]  dx.
$$
The second equality follows from an integration by parts, which is licit
because for $t$ fixed, $g(t,.)$ is continuous, piece-wise $C^1$ and $\int_0^\infty |\partial_y g(t,y)| dy <\infty$.
The boundary term disappears since $g(t,0)=1$.

\vip

We introduce now $g(x)=\exp(-p^{-1}\int_0^x f(y)dy)$ as in Theorem \ref{theo:41}-(ii), which
solves $p \partial_x g(x) + f(x)g(x)=0$.
Hence, for any $\phi \in C^1_b,$ 
\begin{align*}
 \frac{d}{dt}\int_0^\infty \phi( x) (g(t,x)-g(x))dx 
=& -\int_0^\infty \phi(x)  f(x)(g(t,x)-g(x)) dx 
- p_t\int_0^\infty \phi(x)  (\partial_x (g(t,x)-g(x))dx\\
&+ (p-p_t)\int_0^\infty \phi(x) \partial_x g(x) dx .
\end{align*}
We thus can apply Lemma \ref{dpl} below
with $a(t,x)=g(t,x)-g(x)$ and $b(t,x)= -f(x)(g(t,x)-g(x))- p_t \partial_x (g(t,x)-g(x))
+ (p-p_t)\partial_x g(x)$, which both belong to $L^\infty_{loc}([0,T],L^1([0,\infty))$
because $(1+f(x))g(x)+|\partial_x g(x)|$ is integrable, because $p_t$ is locally bounded,
because $\int_0^\infty f(x)g(t,x)dx=p_t$ and because $\sup_{[0,T]} ||\partial_x g(t,\cdot)||_{L^1(\R)} <\infty$
for all $T>0$, to deduce that $t\mapsto \int_0^\infty|g(t,x)-g(x)|dx$ is continuous and satisfies,
for a.e. $t\geq 0$,
\begin{align*}
\frac d{dt} \int_0^\infty|g(t,x)-g(x)|dx=& - \int_0^\infty  \sg(g(t,x)-g(x))f(x)(g(t,x)-g(x))dx \\
&- p_t \int_0^\infty  \sg(g(t,x)-g(x)) 
\partial_x (g(t,x)-g(x)) dx\\
&+(p-p_t) \int_0^\infty \sg(g(t,x)-g(x))\partial_x g(x).
\end{align*}
The second term on the RHS equals 
$-p_t \int_0^\infty \partial_x[|g(t,x)-g(x)|]dx = p_t |g(t,0)-g(0)|=0$. Using that $g$ is decreasing,
we easily check that the last term is bounded by 
$$
|p_t-p| \int_0^\infty |\partial_x g(x)| dx = |p-p_t| g(0)=|p-p_t| = \Big|\int_0^\infty f(x)(g(t,x)-g(x))dx \Big|.
$$
Thus, for a.e. $t\geq 0$,
\begin{align*}
\frac d{dt} ||g(t)-g||_{L^1} \leq & - \int_0^\infty f(x)|g(t,x)-g(x)|dx 
+ \Big|\int_0^\infty f(x)(g(t,x)-g(x))dx \Big| \\
=& - 2 \min \Big\{\int_0^\infty f(x)(g(t,x)-g(x))_+dx , \int_0^\infty f(x)(g(t,x)-g(x))_-dx    \Big\} .
\end{align*}
But by (\ref{tacc}),
$g(t,x) \leq 1$ for all $t\geq 1$, all $x\in [0,A_1]$ and $A_1=\int_0^1 a_sds>0$. Consequently, for $t\geq 1$,
all $\e \in [0,A_1]$, since $f$ is nondecreasing,
$$ 
\int_0^\infty f(x)(g(t,x)-g(x))_+dx \geq f(\e) \int_\e^\infty (g(t,x)-g(x))_+dx
\geq f(\e) \Big[\int_0^\infty (g(t,x)-g(x))_+dx - \e\Big].
$$
But $g(t)$ and $g$ being two probability density functions, $\int_0^\infty (g(t,x)-g(x))_+dx
=||g(t)-g||_{L^1}/2$. Thus for all $t\geq 1$, all $\e \in [0,A_1]$,
$$ 
\int_0^\infty f(x)(g(t,x)-g(x))_+dx \geq f(\e) \Big[||g(t)-g||_{L^1}/2 - \e\Big].
$$
Since $g(x) \leq 1$ for all $x\geq 0$, a similar estimate holds true for $\int_0^\infty f(x)(g(t,x)-g(x))_-dx$.
All in all, we have proved that for a.e. $t\geq 1$, all $\e \in [0,A_1]$,
\begin{align*}
\frac d{dt} ||g(t)-g||_{L^1} \leq & - f(\e) \Big[||g(t)-g||_{L^1} - 2\e \Big].
\end{align*}
Choosing $\e = \min\{A_1, ||g(t)-g||_{L^1}/4  \}$ and introducing the function
$\Phi(x)=x f(A_1 \land (x/4))/2$, this implies, still for a.e. $t\geq 1$, that
\begin{align*}
\frac d{dt} ||g(t)-g||_{L^1} \leq & - \Phi(||g(t)-g||_{L^1}).
\end{align*}
Since $\Phi$ is nonnegative increasing on $(0,\infty)$ and vanishes only at $0$, 
we easily conclude, using that $t\mapsto ||g(t)-g||_{L^1}$ is continuous, 
that indeed, $\lim_{t\to \infty} ||g(t)-g||_{L^1}=0$. Recalling that 
$ 2 \| g(t) - g \|_{TV} = ||g(t)-g||_{L^1} $ implies the result.

\vip

If now $f(x)\geq c x^\xi$ on $[0,1]$ for some $\xi \geq 1$, 
there clearly exists another constant $c>0$ such that $\Phi(x) \geq c x^{\xi+1}$
for all $x \in [0,2]$ (recall that $f$ is non-decreasing).
But $||g(t)-g||_{L^1}$ always belongs to $[0,2]$. We thus have, for a.e. $t\geq 1$,
$(d/dt) ||g(t)-g||_{L^1} \leq  - c ||g(t)-g||_{L^1}^{\xi+1}$. It is then not hard to deduce that
there is a constant $C$ such that $||g(t)-g||_{L^1} \leq  C (1+t)^{-1/\xi}$ for all $t\geq 0$.

\vip

It remains to check the following lemma, which is well-known folklore since the
seminal work of DiPerna and Lions \cite{dipernalions}. We unfortunately found no precise reference.

\begin{lem}\label{dpl}
Let $a,b: [0,\infty)\times \R \mapsto \R$ belong to $L^\infty_{loc}([0,\infty),L^1(\R))$, that is 
$\sup_{[0,T]} \int_\R (|a(t,x)|+|b(t,x)|) dx <\infty$ for all $T>0$. Assume that for all $\phi \in C^1_b(\R)$,
all $t\geq 0$,
\begin{equation}\label{ww}
\frac d{dt}\int_\R\phi(y)a(t,y)dy = \int_\R\phi(y)b(t,y)dy.
\end{equation}
Then $t\mapsto \int_\R |a(t,x)|dx$ is continuous and for a.e. $t\geq 0$,
$$
\frac{d}{dt}\int_\R |a(t,x)|dx = \int_\R \sg(a(t,x)) b(t,x) dx,
$$
where $\sg(u)=\indiq_{\{u>0\}}- \indiq_{\{u<0\}}$.
\end{lem}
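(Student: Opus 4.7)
The plan is to upgrade the weak identity \eqref{ww} to the strong integrated equation $a(t,\cdot) = a(0,\cdot) + \int_0^t b(r,\cdot)dr$ in $L^1(\R)$, and then apply the chain rule for $|\cdot|$ via a smooth convex regularization, in the spirit of DiPerna--Lions.

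Fix $T>0$. First I would integrate \eqref{ww} over $[0,t]$ with $t \in [0,T]$ and use Fubini on the right (licit since $\sup_{[0,T]}\int_\R|b(r,y)|dy<\infty$) to obtain, for every $\phi\in C^1_b(\R)$,
$$
\int_\R \phi(y)\Big[a(t,y) - a(0,y) - \int_0^t b(r,y)dr\Big] dy = 0.
$$
The bracketed expression is in $L^1(\R)$ and orthogonal to $C^\infty_c(\R)\subset C^1_b(\R)$, so it vanishes a.e. Setting $\tilde a(t,y) := a(0,y) + \int_0^t b(r,y)dr$, Fubini gives that $\tilde a(t,\cdot)=a(t,\cdot)$ in $L^1$ for each $t\in[0,T]$, and that for $y$ outside a null set, $t\mapsto \tilde a(t,y)$ is absolutely continuous on $[0,T]$ with derivative $b(t,y)$ at a.e.\ $t$.

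Next I would regularize $|\cdot|$ by the convex $C^1$ functions $\eta_\e(u) := \sqrt{u^2+\e^2}-\e$, which satisfy $\eta_\e(u)\downarrow|u|$ as $\e\downarrow 0$, $|\eta_\e'|\le 1$, and $\eta_\e'(u)\to \sg(u)$ pointwise. Applying the classical chain rule to $t\mapsto \eta_\e(\tilde a(t,y))$ at each good $y$, integrating in $y$ (Fubini), then letting $\e\downarrow 0$ by dominated convergence (with majorants $\eta_\e(\tilde a(t,\cdot))\le|\tilde a(t,\cdot)|\in L^1$ and $|\eta_\e'(\tilde a)b|\le|b|$), together with $\tilde a(r,\cdot)=a(r,\cdot)$ a.e.\ for each $r$, yields
$$
\int_\R |a(t,y)|dy = \int_\R |a(0,y)|dy + \int_0^t \int_\R \sg(a(r,y)) b(r,y) dy\, dr.
$$

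To finish, the inner $y$-integral $r\mapsto \int_\R \sg(a(r,y)) b(r,y)dy$ is dominated in absolute value by $\int_\R|b(r,y)|dy\in L^\infty_{loc}$, so the right-hand side above is absolutely continuous in $t$; consequently $t\mapsto\int_\R|a(t,y)|dy$ is continuous and differentiable at a.e.\ $t$ with the claimed derivative. The one genuine technicality is the first step, namely extracting a pointwise-in-$y$ representative $\tilde a$ that is absolutely continuous in $t$ for a.e.\ $y$; once this is in place, the regularization and passage to the limit are routine.
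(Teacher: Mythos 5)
Your proof is correct, and it takes a genuinely different route from the paper. The paper follows the classical DiPerna--Lions strategy: mollify $a$ and $b$ in the spatial variable with a Gaussian kernel $\rho_\e$, use the weak equation with the test function $\phi(\cdot)=\rho_\e(x-\cdot)$ to obtain the pointwise identity $\partial_t a_\e = b_\e$ for the smooth regularizations, apply the chain rule with $\psi\in C^2_b$, let $\e\to 0$ to get the renormalized identity $\int\psi(a(t)) = \int\psi(a(0)) + \int_0^t\int\psi'(a)b$, and finally approximate $|\cdot|$ by $\psi_n$. You instead observe that, since \eqref{ww} contains no spatial derivative, it is really an $L^1(\R)$-valued ODE $\partial_t a=b$: integrating \eqref{ww} in $t$, applying Fubini, and testing against $C^\infty_c$ gives $a(t,\cdot)=a(0,\cdot)+\int_0^t b(r,\cdot)\,dr$ in $L^1$ for each $t$, which yields a representative $\tilde a(\cdot,y)$ that is absolutely continuous in $t$ for a.e.\ $y$; the ordinary chain rule with $\eta_\e$ and dominated convergence then conclude. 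This avoids spatial mollification entirely and is arguably cleaner for this particular lemma, whereas the paper's mollification approach is more robust and would also handle genuine transport terms. Both proofs ultimately rest on the same convex regularization of $|\cdot|$ and dominated convergence with majorant $|b|$, and both correctly identify the key technicality as obtaining a form of the equation to which a pointwise chain rule applies.
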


\begin{proof}
We introduce $\rho_\e=(2\pi \e)^{-1/2} \exp(-x^2/(2\e))$ for $\e>0$ and $x \in \R$
and define $a_\e(t,x)=[a(t,\cdot)\star\rho_\e](x)=\int_\R a(t,y)\rho_\e(x-y)dy$ and 
$b_\e(t,x)=[b(t,\cdot)\star\rho_\e](x)$.
It is well-known that for all $t\geq 0$, 
$\lim_{\e \to 0}(||a(t,\cdot)-a_\e(t,\cdot)||_{L^1}+||b(t,\cdot)-b_\e(t,\cdot)||_{L^1})   =0$.
It is also clear that for all $t\geq 0$, all $\e>0$, $||a_\e(t,\cdot)||_{L^1}\leq ||a(t,\cdot)||_{L^1}$
and $||b_\e(t,\cdot)||_{L^1}\leq ||b(t,\cdot)||_{L^1}$.

\vip

{\it Step 1.}
Applying \eqref{ww} with $\phi(y)=\rho_\e(x-y)$, we find that for all $t\geq 0$, all $x \in \R$,
$\partial_ta_\e(t,x)=b_\e(t,x)$. Hence for any $\psi \in C^1_b(\R)$, $\partial_t \psi(a_\e(t,x))=\psi'(a_\e(t,x))
b_\e(t,x)$. We conclude that for all $t\geq 0$,
\begin{equation}\label{se}
\int_\R \psi(a_\e(t,x)) dx = \int_\R \psi(a_\e(0,x)) dx + \intot \int_\R\psi'(a_\e(s,x))b_\e(s,x) dxds.
\end{equation}

{\it Step 2.}
We now pass to the limit as $\e\to 0$ in \eqref{se} to deduce that for all $t\geq 0$, all $\psi \in C^2_b(\R)$,
\begin{equation}\label{ss}
\int_\R \psi(a(t,x)) dx = \int_\R \psi(a(0,x)) dx + \intot \int_\R\psi'(a(s,x))b(s,x) dxds.
\end{equation}
First, we clearly have that $\lim_{\e \to 0} \int_\R \psi(a_\e(t,x)) dx = \int_\R \psi(a(t,x)) dx$
because $\psi$ is globally Lipschitz continuous and because $\lim_{\e \to 0} ||a(t,\cdot)-a_\e(t,\cdot)||_{L^1}=0$.
The first term on the RHS is of course treated similarly.
We next introduce 
$\Delta_\e =|\intot \int_\R\psi'(a_\e(s,x))b_\e(s,x) dxds-\intot \int_\R\psi'(a(s,x))b(s,x) dxds|$
and write
\begin{align*}
\Delta_\e \leq& ||\psi'||_\infty \intot \int_\R |b_\e(s,x)-b(s,x)|dxds + \intot \int_\R |b(s,x)||\psi'(a_\e(s,x))
-\psi'(a(s,x))|dxds=I_\e+J_\e.
\end{align*}
First, $\lim_{\e\to 0} I_\e=0$ by dominated convergence, because $\int_\R |b_\e(s,x)-b(s,x)|dx$
is bounded on $[0,t]$ and tends to $0$ for each $s\geq 0$. Next, we observe that for all (large) $K>0$,
$$
J_\e \leq 2  ||\psi'||_\infty \intot \int_\R |b(s,x)| \indiq_{\{|b(s,x)|> K\}} dxds + ||\psi''||_\infty K
\intot \int_\R|a_\e(s,x)-a(s,x)|dxds.
$$
The second term tends to $0$ as $\e\to 0$, for the same reasons as for $I_\e$. We thus conclude that
$\limsup_{\e\to 0} \Delta_\e \leq 2  ||\psi'||_\infty \intot \int_\R |b(s,x)| \indiq_{\{|b(s,x)|> K\}} dxds$
for all $K>0$. But this last quantity tends to $0$ as $K\to \infty$, so that finally,
$\lim_{\e\to 0} \Delta_\e =0$ and \eqref{ss} is verified.

\vip

{\it Step 3.} Here we verify, and this will conclude the proof, that 
\begin{equation}\label{sss}
\int_\R |a(t,x)| dx = \int_\R |a(0,x)| dx + \intot \int_\R\sg(a(s,x)) b(s,x) dxds.
\end{equation}
We consider a sequence of even smooth nonnegative functions $\psi_n \in C^2_b(\R)$, such that
$\psi_n(u)$ increases to $|u|$, for each $u\in \R$, as $n\to \infty$, such that
$\psi_n'(u)$ tends to $\sg(u)$ for each $u\in\R$ and such that $\sup_n ||\psi_n'||_\infty \leq 2$.
The choice $\psi_n(u)=\sqrt{x^2+1/n}-\sqrt{1/n}$ is possible. By Step 2, we find,
for all $t\geq 0$, all $n\geq 1$,
$$
\int_\R \psi_n(a(t,x)) dx = \int_\R \psi_n(a(0,x)) dx + \intot \int_\R\psi_n'(a(s,x))b(s,x) dxds.
$$
By monotone convergence, we have $\lim_n \int_\R \psi_n(a(t,x)) dx=\int_\R |a(t,x)| dx$
and $\lim_n \int_\R \psi_n(a(0,x)) dx=\int_\R |a(0,x)| dx$. 
It also holds true that $\lim_n\intot \int_\R\psi_n'(a(s,x))b(s,x) dxds=\intot \int_\R\sg(a(s,x)) b(s,x) dxds$,
by dominated convergence. Indeed,  we know that $\psi_n'(a(s,x)) \to \sg(a(s,x))$ for each $s,x$, and
$|\psi_n'(a(s,x))b(s,x)| \leq 2 |b(s,x)|$, which is integrable on $[0,t]\times \R$ by assumption.
\end{proof}

\section*{Acknowledgments}

This paper is a continuation of \cite{aaee}, and EL thanks Marzio Cassandro, Anna De Masi and 
Errico Presutti for their collaborative
participation and the many discussions we had in L'Aquila and Rome, especially concerning the 
invariant distribution of the limit process. EL also thanks 
Carl Graham for many discussions at the beginning of the work. 
This work is part of  FAPESP project ``NeuroMat" (grant 2011/51350-6).
The authors warmly thank the referees for their suggestions that allowed them to improve subsequently
the presentation of the paper.

\end{document}